\newcommand{\R}{\mathbb{R}}
\newcommand{\N}{\mathbb{N}}
\newcommand{\D}{\mathcal{D}}
\newcommand{\M}{\mathbf{M}}
\newcommand{\Le}{\mathcal{L}}
\newcommand{\Ha}{\mathcal{H}}
\newcommand{\loc}{\mathrm{loc}}
\let\div\relax\DeclareMathOperator{\div}{div}
\DeclareMathOperator{\dist}{dist}
\DeclareMathOperator{\supp}{supp}
\DeclareMathOperator*{\esssup}{ess \, sup}
\newcommand{\blank}{{\mkern 2mu\cdot\mkern 2mu}}
\newcommand{\rep}[2]{\llbracket #1 \rrbracket_{#2}}
\newcommand{\locrep}[2]{\llparenthesis #1 \rrparenthesis_{#2}}
\newcommand{\dd}[2]{\frac{\partial #1}{\partial #2}}
\newcommand{\set}[2]{\left\{ #1 \colon #2 \right\}}
\newcommand{\restr}{\mathchoice
{\kern2pt\mbox{\vrule width 0.08ex height1.5ex depth0ex\kern-0.08ex\vrule width 1.5ex height.08ex depth0ex}\kern2pt}
{\kern2pt\mbox{\vrule width 0.08ex height1.5ex depth0ex\kern-0.08ex\vrule width 1.5ex height.08ex depth0ex}\kern2pt}
{\kern1.5pt\mbox{\vrule width 0.06ex height1.1ex depth0ex\kern-0.06ex\vrule width 1.1ex height.06ex depth0ex}\kern1.5pt}
{\kern1pt\mbox{\vrule width 0.04ex height0.75ex depth0ex\kern-0.04ex\vrule width 0.75ex height.04ex depth0ex}\kern1pt}
}
\newtheorem{theorem}{Theorem}
\newtheorem{lemma}[theorem]{Lemma}
\newtheorem{proposition}[theorem]{Proposition}
\newtheorem{definition}[theorem]{Definition}
\newtheorem{corollary}[theorem]{Corollary}
\newtheorem*{problem}{Problem}
\theoremstyle{remark}
\newtheorem*{notation}{Notation}{}
\newtheorem*{remark}{Remark}{}
\newenvironment{subproof}[1]{\medskip\noindent \textit{#1} \,}{}
\setlist[enumerate, 1]{label = (\roman*), ref = (\roman*)}
\setlist[enumerate, 2]{label = (\alph*), ref = (\alph*)}
\begin{document}

\title{Minimisers of supremal functionals and mass-minimising $1$-currents}

\author{Nikos Katzourakis \\ \small Department of Mathematics and Statistics, University of Reading, Whiteknights, Pepper Lane, Reading, RG6 6AX, UK. E-mail: n.katzourakis@reading.ac.uk
\and Roger Moser \\ \small Department of Mathematical Sciences,
University of Bath, Bath BA2 7AY, UK.
E-mail: r.moser@bath.ac.uk}

\maketitle

\begin{abstract}
We study vector-valued functions that minimise the $L^\infty$-norm of their derivatives
for prescribed boundary data. We construct a vector-valued, mass minimising $1$-current
(i.e., a generalised geodesic) in the domain such that all solutions of the problem
coincide on its support. Furthermore, this current can be interpreted as a streamline of the solutions.
The construction relies on a $p$-harmonic approximation. In the case of
scalar-valued functions, it is closely related to a construction of Evans and Yu
\cite{Evans-Yu:05}. We therefore obtain an extension of their theory.
\end{abstract}

\section{Introduction}

For $n \in \N$, let $\Omega \subseteq \R^n$ be a bounded domain with smooth boundary. Given $N \in \N$,
we study functions $u \colon \Omega \to \R^N$ that minimise the functional
\[
E_\infty(u) = \esssup_\Omega |Du|
\]
for prescribed boundary data, where $|\blank|$ denotes the Frobenius norm of an $(N \times n)$-matrix.

Variational problems of this sort go back to the pioneering work of
Aronsson \cite{Aronsson:65, Aronsson:66, Aronsson:67, Aronsson:68}. The
scalar case $N = 1$ is now quite well understood. For the above functional $E_\infty$, it gives
rise to the Aronsson equation, which has a well-developed theory in the framework of viscosity solutions.
It has unique solutions for given boundary data, which correspond not just to minimisers
of the functional, but to so-called absolute minimisers \cite{Jensen:93}. These are characterised by
the condition that they minimise $\esssup_{\Omega'} |D u|$ in suitable subsets
$\Omega' \subseteq \Omega$ for their own boundary data. There is a regularity theory for
solutions of the equation as well \cite{Savin:05, Evans-Savin:08, Evans-Smart:11.2, Koch-Zhang-Zhou:19, Dong-Peng-Zhang-Zhou:24}.

Much less is known for the vector-valued case $N \ge 2$, despite some work by the first author
\cite{Katzourakis:12, Katzourakis:13, Katzourakis:14.2, Katzourakis:14.1, Katzourakis:15.2, Katzourakis:17.1}.
(This problem should not be confused with the problem of vector-valued optimal Lipschitz extensions \cite{Sheffield-Smart:12},
which amounts to replacing the Frobenius norm with the operator norm. For $n = 1$ or $N = 1$, the two problems
are equivalent, but in general, they are not.)
The vector-valued counterpart to the Aronsson equation is easy to write down, but much more
difficult to make sense of. Above all, there is no meaningful interpretation of viscosity
solutions. It is not known if absolute minimisers exist in general or if they are unique. We therefore
take a somewhat different point of view in this paper.
Rather than trying to determine a distinguished solution of the problem, we ask what all minimisers
of $E_\infty$ for given boundary data have in common.

The natural space for the functional $E_\infty$ is the Sobolev space $W^{1, \infty}(\Omega; \R^N)$.
We prescribe boundary data given in terms of a fixed Lipschitz continuous function
$u_0 \colon \R^n \to \R^N$,
and then we are interested in the subset $u_0 + W_0^{1, \infty}(\Omega; \R^N)$. We define
\[
e_\infty = \inf_{u_0 + W_0^{1, \infty}(\Omega; \R^N)} E_\infty.
\]
We are thus interested in the following problem.

\begin{problem}
Study all maps $u_\infty \in u_0 + W^{1, \infty}(\Omega; \R^N)$ such that $E_\infty(u_\infty) = e_\infty$.
\end{problem}

Since $u_0$ is Lipschitz continuous, it has a tangential derivative
at almost every boundary point with respect to the $(n - 1)$-dimensional Hausdorff measure.
We write $D' u_0$ for the tangential derivative of $u_0$, and we define
\[
e_\infty' = \esssup_{\partial \Omega} |D' u_0|
\]
(where the essential supremum is taken with respect to the $(n - 1)$-dimensional Hausdorff measure on $\partial \Omega$).
The case where $e_\infty' < e_\infty$ is particularly interesting for the reasons explained below.

One of the key ingredients in our analysis is a measure derived as a limit from $p$-harmonic approximations.
This tool essentially goes back to an idea of Evans \cite{Evans:03.2} and has been studied by Evans and Yu \cite{Evans-Yu:05}
for the case $N = 1$, albeit in a different form.
We show here that there is an interesting geometric structure behind this limit measure. Moreover,
it is useful for the vector-valued problem, too, even though this requires an approach rather different
from the one taken by Evans and Yu.

Roughly speaking, we will find some generalised length-minimising `curves' in the domain, along
which all solutions of the problem must coincide. In order to formulate a precise statement, however,
we need some tools from geometric measure theory, above all the notion of currents.

Since we will work only with $1$-currents (and their boundaries, given by $0$-currents),
readers not familiar with geometric measure theory may think of vector-valued  distributions instead,
or indeed of vector-valued measures in most cases. This will, however, obscure some of the
geometric content of our results.

\begin{definition}[Current, mass, boundary]
For $j = 0, \dotsc, n$, let $\D^j(\R^n)$ denote the space of smooth, compactly supported $j$-forms in $\R^n$, endowed with the topology
induced by locally uniform convergence of all derivatives. A \emph{$j$-current} in $\R^n$
is an element of its dual space $\D_j(\R^n) = (\D^j(\R^n))^*$. Given a $j$-current $T \in \D_j(\R^n)$, its \emph{mass}
is\footnote{It is more common to define the mass in terms of the co-mass norm on the space of $j$-covectors, but this
definition appears, e.g., in a book by Simon \cite{Simon:83}. In the context of this paper, the distinction is inconsequential, because
we will work only with $1$-currents and their boundaries.}
\[
\M(T) = \sup\set{T(\omega)}{\omega \in \D^j(\R^n) \text{ with } \sup_{\R^n} |\omega| \le 1}.
\]
If $j \ge 1$, its \emph{boundary} is the $(j - 1)$-current $\partial T$ such that $\partial T(\sigma) = T(d\sigma)$
for every $\sigma \in \D^{j - 1}(\R^n)$.
\end{definition}

We are interested above all in $1$-currents supported on $\overline{\Omega}$
and their boundaries (which are $0$-currents and can be regarded as
distributions). These can be interpreted as generalised oriented curves in $\overline{\Omega}$.
Indeed, given an oriented $C^1$-curve $\Gamma \subset \overline{\Omega}$ of finite length,
we can define a corresponding $1$-current $T$ by integration of $1$-forms over $\Gamma$, i.e.,
\begin{equation} \label{eq:current-curve}
T(\omega) = \int_\Gamma \omega.
\end{equation}
In this situation, the mass $\M(T)$ will be the length of $\Gamma$.
More generally, if $\mu$ is a Radon measure on $\overline{\Omega}$ and $\tau$ is a vector field that is integrable
with respect to $\mu$, then we can define a $1$-current $T = [\mu, \tau]$ by the formula
\begin{equation} \label{eq:current-measure}
T(\omega) = \int_{\overline{\Omega}} \omega(\tau) \, d\mu.
\end{equation}
In this case,
\[
\M(T) = \int_{\overline{\Omega}} |\tau| \, d\mu
\]
and
\[
\partial T(\sigma) = \int_{\overline{\Omega}} d\sigma(\tau) \, d\mu
\]
for a $0$-form $\sigma$ on $\R^n$.
For example, suppose that we have the above oriented curve $\Gamma$. Write $\Ha^1$ for the one-dimensional Hausdorff measure
and define $\mu = \Ha^1 \restr \Gamma$. Furthermore, let $\tau$ denote the unit tangent vector field along $\Gamma$
consistent with the orientation. Then the formulas \eqref{eq:current-curve} and \eqref{eq:current-measure} give
rise to the same current.

We will also need to study $N$-tuples $T = (T_1, \dotsc, T_N)$ of $1$-currents, which may also be regarded as
vector-valued currents. The boundary $\partial T$ is then taken component-wise. We use the following extension of the mass.

\begin{definition}[Joint mass, normal]
Let $T = (T_1, \dotsc, T_N) \in (D_j(\R^n))^N$ be an $N$-tuple of $j$-currents in $\R^n$. Then the \emph{joint mass}
of $T$ is
\[
\M(T) = \sup\set{\sum_{k = 1}^N T_k(\omega_k)}{\omega_1, \dotsc, \omega_N \in \D^j(\R^n) \text{ with } \sup_{\R^n} \sum_{k = 1}^N |\omega_k|^2 \le 1}.
\]
For $j \ge 1$, we say that $T$ is \emph{normal} if $\M(T) < \infty$ and $\M(\partial T) < \infty$.
\end{definition}

Given $T = (T_1, \dotsc, T_N) \in (D_1(\R^n))^N$ with $\M(T) < \infty$, we can always find a Radon measure $\|T\|$ on
$\R^n$ and $\|T\|$-measurable vector fields $\vec{T}_1, \dotsc, \vec{T}_N$ such that
$\sum_{k = 1}^n |\vec{T}_k|^2 = 1$
almost everywhere and $T_k = [\|T\|, \vec{T}_k]$ for $k = 1, \dotsc, N$. Furthermore, this representation is unique
(up to identification of vector fields that agree $\|T\|$-almost everywhere).
We then write $\vec{T} = (\vec{T}_1, \dotsc, \vec{T}_N)$.

Similarly, if $\M(\partial T) < \infty$, then $\partial T$ is represented by
an $\R^N$-valued Radon measure. For any continuous function
$u \colon \R^n \to \R^N$ with compact support, we may then write
\[
\partial T(u) = \sum_{k = 1}^N \partial T_k(u_k).
\]
If $\supp \partial T \subseteq \overline{\Omega}$, then
this also makes sense for any $u \in W^{1, \infty}(\Omega; \R^N)$, including the minimisers of $E_\infty$.

For a minimiser $u_\infty \in u_0 + W_0^{1, \infty}(\Omega; \R^N)$ of $E_\infty$,
and for $T$ as above, we want to be able to make
statements about the `behaviour of $D u_\infty$ along $\|T\|$'. Since the support of
$\|T\|$ can be a null set with respect to the Lebesgue measure on $\Omega$, and since $D u_\infty$ is only
well-defined up to null sets, this requires some explanation.

\begin{definition}[Local $L^2$-representative] \label{def:restriction-local}
Let $\mu$ be a (non-negative) Radon measure on $\Omega$ and let $f \in L_\loc^1(\Omega)$. We say that $g \in L_\loc^2(\mu)$ is the
\emph{local $L^2$-representative} of $f$ with respect to $\mu$, and we write
\[
g = \locrep{f}{\mu},
\]
if for any compact set $K \Subset \Omega$ and
for any $\eta \in C_0^\infty(\R^n)$ with $\int_{\R^n} \eta(x) \, dx = 1$, the functions
$\eta_\epsilon(x) = \epsilon^{-n} \eta(x/\epsilon)$ satisfy
\[
\lim_{\epsilon \searrow 0} \int_K |g - \eta_\epsilon * f|^2 \, d\mu = 0.
\]
The concept is defined similarly for vector-valued functions.
\end{definition}

We require some more notation.
For $x \in \R^n$ and $r > 0$, let $B_r(x)$ denote the open ball in $\R^n$ with centre $x$ and radius $r$.
Given a measurable function $f \colon \Omega \to [0, \infty)$, we define $f^\star \colon \overline{\Omega} \to [0, \infty]$ by
\[
f^\star(x) = \lim_{r \searrow 0} \esssup_{B_r(x) \cap \Omega} f
\]
for $x \in \overline{\Omega}$.

We can now formulate our first main result.

\begin{theorem} \label{thm:main}
There exists an $N$-tuple of $1$-currents $T = (T_1, \dotsc, T_N)$ of finite joint mass
with the following properties.
\begin{enumerate}
\item \label{itm:boundary} $\supp T \subseteq \overline{\Omega}$ and $\supp \partial T \subseteq \partial \Omega$.
\item \label{itm:minimiser} For any minimiser $u = (u_1, \dotsc, u_N) \in u_0 + W_0^{1, \infty}(\Omega; \R^N)$ of $E_\infty$,
\begin{enumerate}
\item $|Du|^\star(x) = e_\infty$ for $\|T\|$-almost every $x \in \Omega$;
\item $\locrep{Du}{\|T\|} = e_\infty \vec{T}$.
\end{enumerate}
\item \label{itm:unique} Any two minimisers of $E_\infty$ in $u_0 + W_0^{1, \infty}(\Omega; \R^N)$ coincide on $\supp T$.
\item \label{itm:non-trivial} If $e_\infty' < e_\infty$, then the following holds true.
\begin{enumerate}
\item \label{itm:normal} $T$ is normal.
\item \label{itm:trivial-on-boundary} $\|T\|(\partial \Omega) = 0$ and $\|T\|(\Omega) > 0$.
\item \label{itm:optimal} $\partial T(u_0) = e_\infty \M(T)$.
\item \label{itm:geodesic} Let $S$ be a normal $N$-tuple of $1$-currents
such that $\supp S \subseteq \overline{\Omega}$ and $\supp \partial S \subseteq \partial \Omega$.
If $\partial S = \partial T$, then $\M(S) \ge \M(T)$.
\end{enumerate}
\end{enumerate}
\end{theorem}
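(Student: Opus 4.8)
The plan is to construct $T$ by a $p$-harmonic approximation and then to verify the four blocks of properties in turn. We may assume $e_\infty > 0$, since otherwise $u_0|_{\partial\Omega}$ is constant and $T = 0$ works. For $p > n$, let $u_p \in u_0 + W_0^{1,p}(\Omega;\R^N)$ be the unique minimiser of $u \mapsto \int_\Omega |Du|^p\,dx$, so that $\div\bigl(|Du_p|^{p-2}\nabla u_p^k\bigr) = 0$ in $\Omega$ for $k = 1,\dots,N$, and $u_p$ is locally $C^{1,\alpha}$ with a uniform (in large $p$) $W^{1,\infty}$-bound. Standard arguments give, along a subsequence, $u_p \rightharpoonup u_\infty$ weakly-$*$ in $W^{1,\infty}(\Omega;\R^N)$, where $u_\infty$ is a minimiser of $E_\infty$, together with $\bigl(|\Omega|^{-1}\int_\Omega |Du_p|^p\,dx\bigr)^{1/p} \to e_\infty$ and hence also $\bigl(|\Omega|^{-1}\int_\Omega |Du_p|^{p-1}\,dx\bigr)^{1/(p-1)} \to e_\infty$. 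Writing $c_p = \int_\Omega |Du_p|^{p-1}\,dx$, I would set $T_p = (T_p^1,\dots,T_p^N)$ with $T_p^k = \bigl[c_p^{-1}|Du_p|^{p-1}\Le^n\restr\Omega,\ \nabla u_p^k/|Du_p|\bigr]$, so that $\M(T_p) = 1$, $\supp T_p \subseteq \overline\Omega$, and the divergence-free property forces $\supp\partial T_p \subseteq \partial\Omega$. Any weak-$*$ subsequential limit $T$ then satisfies $\M(T) \le 1$ and the support conditions of \ref{itm:boundary}.

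For \ref{itm:minimiser}, the crucial observation is that every minimiser $u$ of $E_\infty$ has $u - u_p \in W_0^{1,p}$, so $\partial T_p(u) = \partial T_p(u_p)$; unwinding this reads
\[
\int_\Omega \langle Du, \vec{T}_p\rangle\,d\|T_p\| \;=\; c_p^{-1}\int_\Omega |Du_p|^p\,dx \;=\; \int_\Omega |Du_p|\,d\|T_p\|,
\]
where the left side is $\le \int_\Omega |Du|\,d\|T_p\| \le e_\infty$ and the middle term is $\ge \bigl(|\Omega|^{-1}\int_\Omega |Du_p|^p\bigr)^{1/p}\to e_\infty$ by H\"older. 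Hence $\int_\Omega (e_\infty - |Du|)\,d\|T_p\|\to 0$ and $\int_\Omega (|Du| - \langle Du,\vec{T}_p\rangle)\,d\|T_p\|\to 0$, both integrands non-negative, which combine into $\int_\Omega |Du - e_\infty\vec{T}_p|^2\,d\|T_p\|\to 0$. Passing to the limit $p\to\infty$ and inserting this into the definitions of $f\mapsto f^\star$ and of $\locrep{\blank}{\blank}$ should yield $|Du|^\star = e_\infty$ (a covering argument based on the first estimate) and $\locrep{Du}{\|T\|} = e_\infty\vec{T}$, $\|T\|$-a.e. in $\Omega$. I expect this to be the main obstacle: one must interchange $p\to\infty$ with the mollification $\epsilon\searrow 0$ built into $\locrep{\blank}{\blank}$ — i.e.\ turn the $\|T_p\|$-averaged estimates into pointwise statements for $\|T\|$ — while controlling the oscillation of $\vec{T}_p$ and ruling out cancellation in the weak-$*$ limit of $T_p$; a diagonal choice $\epsilon = \epsilon_p\searrow 0$ is the natural device.

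Item \ref{itm:unique} then follows, since for minimisers $u,v$ the midpoint $\tfrac12(u+v)$ is again a minimiser, and strict convexity of the Frobenius norm together with the streamline structure of $T$ — a decomposition of its normal part into integral curves of $\vec{T}$ emanating from $\partial\Omega$, along which a minimiser is pinned down by its boundary values, the part of $\supp T$ inside $\partial\Omega$ being irrelevant as all minimisers agree with $u_0$ there — forces $u = v$ on $\supp T$. For \ref{itm:non-trivial}, assume $e_\infty' < e_\infty$. Barrier and comparison estimates for $u_p$ near the smooth boundary give $\limsup_p\esssup_{\{x\in\Omega:\,\dist(x,\partial\Omega)<\delta\}}|Du_p|\le e_\infty' + o_\delta(1) < e_\infty$; since $c_p^{1/(p-1)}\to e_\infty$, this means $\|T_p\|$ carries vanishing mass near $\partial\Omega$, so $\|T\|(\partial\Omega) = 0$, and by inspecting the boundary fluxes $c_p^{-1}\bigl(|Du_p|^{p-2}\nabla u_p\bigr)\cdot\nu$ one also gets $\M(\partial T) < \infty$ — this is \ref{itm:normal} and part of \ref{itm:trivial-on-boundary}. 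Using \ref{itm:minimiser} and $\|T\|(\partial\Omega) = 0$, one computes $\partial T(u_0) = \partial T(u_\infty) = \int_\Omega\langle\locrep{Du_\infty}{\|T\|},\vec{T}\rangle\,d\|T\| = e_\infty\|T\|(\Omega) = e_\infty\M(T)$, which is \ref{itm:optimal}; and since $\partial T(u_0) = \lim_p c_p^{-1}\int_\Omega|Du_p|^p\ge e_\infty > 0$ we get $\|T\|(\Omega)\ge 1$, hence (with $\M(T)\le 1$) $\M(T) = \|T\|(\Omega) = 1 > 0$, completing \ref{itm:trivial-on-boundary}.

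It remains to prove \ref{itm:geodesic}. Let $S$ be normal with $\supp S\subseteq\overline\Omega$, $\supp\partial S\subseteq\partial\Omega$ and $\partial S = \partial T$. I would use $u_\infty/e_\infty$ as a calibration. Since $\partial\Omega$ is smooth, flatten it locally and reflect $u_\infty$ to obtain an extension $\hat u\in W^{1,\infty}$ of a neighbourhood of $\overline\Omega$ with $|D\hat u|\le e_\infty\bigl(1 + C\dist(\blank,\partial\Omega)\bigr)$, then set $\psi_\delta = \eta_\delta * \hat u$, a smooth map with $\sup|D\psi_\delta|\le e_\infty(1+o_\delta(1))$ near $\overline\Omega$ and $\psi_\delta\to u_\infty$; truncating $\psi_\delta$ in a narrow neighbourhood of $\overline\Omega$ changes none of the pairings below. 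Then, with $\psi_\delta = (\psi_\delta^1,\dots,\psi_\delta^N)$,
\[
\sum_{k=1}^N S_k(d\psi_\delta^k) \;=\; \sum_{k=1}^N \partial S_k(\psi_\delta^k) \;=\; \sum_{k=1}^N \partial T_k(\psi_\delta^k) \;=\; \sum_{k=1}^N T_k(d\psi_\delta^k) \;=\; \int_\Omega\langle D\psi_\delta,\vec{T}\rangle\,d\|T\|,
\]
and as $\delta\to 0$ the right side tends to $\int_\Omega\langle\locrep{Du_\infty}{\|T\|},\vec{T}\rangle\,d\|T\| = e_\infty\M(T)$ by \ref{itm:minimiser} and $\|T\|(\partial\Omega) = 0$, while the left side is $\le e_\infty(1+o_\delta(1))\M(S)$ by definition of the joint mass; letting $\delta\to 0$ yields $\M(S)\ge\M(T)$. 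The one delicate ingredient here is the extension of $u_\infty$ across $\partial\Omega$ with Frobenius gradient bounded by $e_\infty(1+o(1))$, which is exactly where smoothness of $\partial\Omega$ is used.
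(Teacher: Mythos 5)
Your overall strategy is the same as the paper's (a $p$-harmonic approximation, passing the renormalised flux currents to a limit, then a calibration argument for mass minimality), and your calibration argument for \ref{itm:non-trivial}\ref{itm:geodesic} via a mollified reflection of $u_\infty$ is a genuinely attractive alternative to the paper's route through its Corollary \ref{cor:minimal-current-global}. Your shortcut for the key estimate in \ref{itm:minimiser} — exploiting $\partial T_p(u)=\partial T_p(u_p)$ to squeeze $\int_\Omega |Du-e_\infty\vec{T}_p|^2\,d\|T_p\|\to 0$ between H\"older bounds — is clean. However, there are several genuine gaps, and one step that is simply wrong.

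The most concrete error is in \ref{itm:non-trivial}\ref{itm:trivial-on-boundary}. You assert $\limsup_p\esssup_{\{\dist(x,\partial\Omega)<\delta\}}|Du_p|\le e_\infty'+o_\delta(1)$. This is false. Take $N=n$ and $u_0(x)=x$; then $u_p\equiv u_0$ for every $p$, so $|Du_p|\equiv\sqrt n=e_\infty$ everywhere, whereas $e_\infty'=\sqrt{n-1}<\sqrt n$. The normal derivative at the boundary is not controlled by the tangential boundary data, and no barrier gives such a bound. The paper's Lemma \ref{lem:decay-near-boundary} gets $\|T\|(\partial\Omega)=0$ by a different mechanism: an integral decay estimate for $\mu_p$ over shrinking collars, derived from the Euler--Lagrange equation after replacing $u_0$ by $u_0\circ\varpi$ (nearest-point projection), which only needs the tangential smallness in an integrated sense, not pointwise control of $Du_p$.

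The central structural gap is the one you flag yourself in \ref{itm:minimiser}: converting the $\|T_p\|$-level estimate into the statement $\locrep{Du}{\|T\|}=e_\infty\vec T$. A diagonal choice $\epsilon=\epsilon_p$ does not fix this. Your derivation hinges on $\partial T_p(u)=\partial T_p(u_p)$, which uses $u-u_p\in W_0^{1,p}(\Omega;\R^N)$; once you replace $u$ by a mollification $\eta_\epsilon*u$ (as you must, in order to pair with the merely weak-$*$ convergent $T_p$), the boundary data no longer match and the identity breaks, producing an uncontrolled boundary term. The paper resolves this with its Lemma \ref{lem:mass-estimate}, a measure-theoretic lower bound $\alpha^2e_p^2\int\xi\,d\mu_p\le\int\xi|Du_p|^2\,d\mu_p+\alpha^pe_p^2\|\xi\|_{C^0}$ that holds for any cut-off $\xi$, so it can be combined with the Euler--Lagrange equation tested against $\xi D(\mathcal{M}_\epsilon v - u_p)$ without ever needing matching boundary values. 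That is the technical engine you are missing.

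Two further gaps. For \ref{itm:unique}, your appeal to ``the streamline structure of $T$'' and a decomposition into integral curves emanating from $\partial\Omega$ is a heuristic, not a proof: $\vec T$ is only a $\|T\|$-measurable field (the components $\vec T_k$ need not even be parallel a priori), so ``integral curves'' are not defined, and when $e_\infty'=e_\infty$ the current $T$ need not be normal, so a Smirnov-type decomposition is unavailable. The paper instead derives a stationarity (geodesic) identity \eqref{eq:stationary} from the inner variation \eqref{eq:Pohozaev} and strong convergence of the measure-function pairs, and then proves a Poincar\'e-type statement (Proposition \ref{prp:pointwise}) by testing that identity with $|w_\epsilon|^2\chi\,x$; this is what forces $u=v$ on $\supp T$ from $\locrep{D(u-v)}{\|T\|}=0$. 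You never derive the stationarity identity, and without it the uniqueness argument has no rigorous backbone. For \ref{itm:non-trivial}\ref{itm:normal}, ``inspecting the boundary fluxes $c_p^{-1}(|Du_p|^{p-2}\nabla u_p)\cdot\nu$'' presupposes that these traces exist and are uniformly bounded in $L^1(\partial\Omega)$, which is not at all clear from interior regularity alone; the paper establishes this via a regularisation $H_{p,\epsilon}$ together with the Kristensen--Mingione boundary regularity theorem and a Pohozaev-type boundary estimate (Proposition \ref{prp:boundary-measures}). That step needs to be spelt out.
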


The theorem may appear technical, but there is a geometric interpretation. Statement \ref{itm:boundary}
says that we have $N$ generalised curves $T_1, \dotsc, T_N$ in $\overline{\Omega}$ with no boundary in the interior
of $\Omega$.
Along these generalised curves, according to \ref{itm:minimiser}, any solution $u$ of our variational
problem will have a derivative of norm $e_\infty$ (in a certain sense), with $Du_k$ tangent to $T_k$.
We therefore have a generalisation of what is called a \emph{streamline} in
the theory of $\infty$-harmonic functions \cite{Aronsson:67}.
(For the vector-valued case, similar observations have been made by the first author for sufficiently
smooth solutions \cite{Katzourakis:12}.)
Moreover, we have uniqueness of minimisers on the support of $T$ by \ref{itm:unique}.

If $e_\infty' < e_\infty$, then we can make additional statements on the structure of $T$ according to
\ref{itm:non-trivial}. It follows in particular that $T$ is non-trivial in the interior of $\Omega$
by \ref{itm:trivial-on-boundary}. Furthermore, statement \ref{itm:geodesic} says that 
$T$ minimises the joint mass (generalised length) among all competitors with the same boundary in this case.
This means that we can think of \emph{generalised
length-minimising geodesics} rather than just generalised curves.

If $e_\infty' = e_\infty$, then the theorem may appear vacuous, because it does not rule out that $T$
is supported completely on the boundary or even that $T = 0$.
The proof, however, reveals some more information, which is omitted here for the sake
of brevity. In our construction, the $1$-current $T$ will arise from a limit of
$p$-harmonic functions $u_p \colon \Omega \to \R^N$, or more precisely, from
renormalisations of the currents $[|D u_p|^{p - 2} \mathcal{L}^n, Du_p]$, where $\mathcal{L}^n$ is
the Lebesgue measure on $\R^n$. It is possible that $\supp T \subseteq \partial \Omega$,
and then the theorem is indeed vacuous, but it can be interesting even if $e_\infty' = e_\infty$.
In any case, $\|T\|$ coincides with the measure studied by Evans and Yu \cite{Evans-Yu:05} for $N = 1$.
They proved a condition that amounts to \ref{itm:boundary} and some
additional properties related to \ref{itm:minimiser} and \ref{itm:unique}.
The analysis of this paper can therefore also be regarded as an extension of their results.
Statement \ref{itm:non-trivial} is completely new even if $N = 1$.

In the case $e_\infty' = e_\infty$, we still have a local variant of \ref{itm:non-trivial}.\ref{itm:geodesic}.
Since its formulation is even more technical, however, we postpone it until Section \ref{sct:proof}
(see Theorem \ref{thm:local-geodesic}).
We can further derive some information about the structure of $T$, which we will do in Section \ref{sct:structure}
(see Theorem \ref{thm:structure}).
We employ standard results from geometric measure theory here, even though we consider non-standard
objects. This result can be thought of as a consequence of the the mass minimising property
in statement \ref{itm:non-trivial}.\ref{itm:geodesic}, but our construction allows a shortcut.

There is a simple observation that exemplifies some of the behaviour described
in Theorem \ref{thm:main}. Suppose that there are two boundary points $x, y \in \partial \Omega$ such that
the line segment $L$ between $x$ and $y$ is contained in $\overline{\Omega}$.
Suppose further that $|u_0(x) - u_0(y)| \ge e_\infty |x - y|$. Then, if $u$ is a minimiser of $E_\infty$ in $u_0 + W_0^{1, \infty}(\Omega; \R^N)$,
we can see quite easily that the restriction of $u$ to $L$ is affine with a derivative of norm $e_\infty$.
In general, such a pair of points need not exist, but the $N$-tuple of currents $T$ from Theorem \ref{thm:main}
has properties similar to the line segment $L$.

In the case $N = 1$, it was in fact proved by Aronsson \cite[Theorem 2]{Aronsson:67} that the set where all
minimisers coincide is characterised by line segments as above. (Other related results also exist
\cite{Brizzi-DePascale:23, Brizzi:22}.) This is not true for $N > 1$.
Consider, e.g., the case $n = N$. If $u_0(x) = x$ for all $x \in \Omega$, then $u_0$ is $p$-harmonic for all
$p < \infty$, and our analysis shows that it is also the unique minimiser of $E_\infty$ in $u_0 + W_0^{1, \infty}(\Omega; \R^n)$.
But for any $x, y \in \partial \Omega$, we clearly have the identity $|u_0(x) - u_0(y)| = |x - y|$, while
$e_\infty = \sqrt{n}$ in this case.

We have interpreted $T$ from Theorem \ref{thm:main} as a generalised curve, but in general it need not actually
be $1$-dimensional. For example, suppose that $u_0$ is an affine function. Then, as in the preceding
example, we conclude that every $p$-harmonic function for these boundary data coincides
with $u_0$, and an examination of our construction reveals that $\|T\|$ is a normalised version of
the Lebesgue measure in $\Omega$ and $\vec{T}$ is constant. We can then interpret $T$ as the collective
representation of many line segments in $\Omega$.

If $e_\infty' = e_\infty$, then general statements on the behaviour of minimisers of $E_\infty$ in
the interior of $\Omega$ cannot be expected.
In this case, the minimum value of $E_\infty$ is in fact dictated by the local behaviour near a single boundary point.

\begin{proposition} \label{prp:boundary-point}
If $e_\infty = e_\infty'$, then there exists $x \in \partial \Omega$ such that $|Du|^\star(x) \ge e_\infty$
for any $u \in u_0 + W_0^{1, \infty}(\Omega; \R^N)$.
\end{proposition}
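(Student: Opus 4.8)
The plan is to locate the point $x$ as a limit of boundary points where the tangential derivative of $u_0$ is almost maximal, and then to show, by straightening $\partial\Omega$ near those points, that the interior gradient of any admissible map is at least $e_\infty$ arbitrarily close to $x$. Concretely: since $e_\infty = e_\infty' = \esssup_{\partial\Omega}|D'u_0|$, for each $j \in \N$ the set $\{z \in \partial\Omega : |D'u_0(z)| > e_\infty - 1/j\}$ has positive $\Ha^{n-1}$-measure; as $\Ha^{n-1}$-almost every boundary point is a Lebesgue point of $D'u_0 \in L^\infty(\partial\Omega; \Ha^{n-1})$, we pick in this set a point $z_j$ that is also a Lebesgue point. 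By compactness of $\partial\Omega$ we pass to a subsequence with $z_j \to x \in \partial\Omega$. The claim is that this $x$ has the stated property.

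Fix any $u \in u_0 + W_0^{1,\infty}(\Omega;\R^N)$ (minimality is not used, only membership in this class) and $r > 0$, and set $c = \esssup_{B_r(x)\cap\Omega}|Du|$. It suffices to prove $e_\infty \le (1 + Cr)^2 c$ for a constant $C = C(\Omega)$, since letting $r \searrow 0$ then yields $|Du|^\star(x) \ge e_\infty$. By smoothness and compactness of $\partial\Omega$ there are $r_0, C > 0$ such that every $z \in \partial\Omega$ admits a smooth chart $\Psi_z$ flattening $\partial\Omega \cap B_{r_0}(z)$ to $\{\xi_n = 0\}$, with $\Psi_z(z) = 0$, $D\Psi_z(z) = \mathrm{Id}$ and $\|\Psi_z\|_{C^2} + \|\Psi_z^{-1}\|_{C^2} \le C$ on $B_{r_0}(z)$. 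For $j$ so large that $|z_j - x| < r/2$ and with $r < r_0$, transplant $u$ to $w := u \circ \Psi_{z_j}^{-1} \in W^{1,\infty}$ on a half-ball whose flat portion $\{\xi_n = 0\}$ sits compactly inside $\partial\Omega$. Since $D\Psi_{z_j}$ and $D\Psi_{z_j}^{-1}$ differ from $\mathrm{Id}$ by $O(r)$ on $B_{r/2}(z_j) \subseteq B_r(x)$, the chain rule together with the inequality $|AB| \le |A|\,|B|_{\mathrm{op}}$ for the Frobenius norm gives $|Dw| \le (1 + Cr) c$ almost everywhere; and the trace $g$ of $w$ on $\{\xi_n = 0\}$ is $u_0$ composed with a smooth parametrisation of $\partial\Omega$ near $z_j$.

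The crux — the one step that is not bookkeeping — is the following trace estimate: if $v \in W^{1,\infty}$ on a half-ball satisfies $|Dv| \le a$ almost everywhere, then its trace $g$ on the flat portion of the boundary satisfies $|Dg| \le a$ almost everywhere. This is subtle precisely because it cannot be read off from the Lipschitz constant of $v$ alone: a Lipschitz bound controls only the operator norm of $Dv$, whereas it is the Frobenius norm that enters here. Instead one slices. For almost every level $t > 0$, Fubini gives that the slice $v(\blank, t)$ has $\|D[v(\blank, t)]\|_{L^\infty} \le \|Dv\|_{L^\infty} \le a$; choosing a sequence $t_k \searrow 0$ of such levels, $v(\blank, t_k) \to g$ uniformly (a $W^{1,\infty}$ map on a half-ball has a Lipschitz extension to its closure), hence in the sense of distributions, so every weak-$*$ limit point in $L^\infty$ of the bounded sequence $D[v(\blank, t_k)]$ must equal $Dg$; therefore $|Dg| \le a$ almost everywhere.

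Applying this with $v = w$ and $a = (1 + Cr)c$ gives $|Dg| \le (1 + Cr)c$ almost everywhere; transporting back through the chart, whose distortion is again $O(r)$, yields $|D'u_0| \le (1 + Cr)^2 c$ for $\Ha^{n-1}$-almost every point of $\partial\Omega$ near $z_j$, and since $z_j$ is a Lebesgue point of $D'u_0$ we get $|D'u_0(z_j)| \le (1 + Cr)^2 c$. Consequently $e_\infty - 1/j < |D'u_0(z_j)| \le (1 + Cr)^2 c$ for all sufficiently large $j$; letting $j \to \infty$ gives $e_\infty \le (1 + Cr)^2 \esssup_{B_r(x)\cap\Omega}|Du|$, and then $r \searrow 0$ gives $e_\infty \le |Du|^\star(x)$. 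The main obstacle is exactly the trace lemma of the third paragraph, namely controlling the \emph{Frobenius} norm of the tangential derivative of the boundary datum — not merely its operator norm — by the interior gradient bound; the boundary charts, the distortion estimates, and the Lebesgue-point bookkeeping surrounding it are routine.
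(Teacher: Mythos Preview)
Your proof is correct and isolates the same core difficulty as the paper's, namely that the \emph{Frobenius} norm of the tangential derivative of $u_0$ must be dominated by the interior bound on $|Du|$, which cannot be deduced from Lipschitz constants alone. The executions differ, however. The paper selects $x$ directly as a maximiser of the upper semicontinuous envelope $\alpha(x) = \lim_{r\searrow 0} \esssup_{\partial\Omega \cap B_r(x)} |D'u_0|$; you instead take a subsequential limit of Lebesgue points $z_j$ with $|D'u_0(z_j)| > e_\infty - 1/j$, which lands on a point with the same property. For the key estimate, the paper avoids charts altogether: it applies a rigid motion to place a differentiability point $y$ at the origin with tangent plane $\R^{n-1}\times\{0\}$, works in a small box where $\partial\Omega$ is $\epsilon$-flat, and then, on a thin slab $[-s,s]^{n-1}\times[\epsilon s, 2\epsilon s]$ just inside $\Omega$, shows by integrating $\partial u_k/\partial x_j$ along axis-parallel lines that $\fint Du$ is within $C\epsilon$ of $D'u_0(y)$, forcing $|Du| \ge |D'u_0(y)| - C'\epsilon$ on a set of positive measure. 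Your route instead flattens $\partial\Omega$ via $C^2$ charts, absorbing an $O(r)$ distortion, and packages the crux as a clean trace lemma proved by slicing at heights $t_k \searrow 0$ and passing to a weak-$*$ limit in $L^\infty$. Your formulation has the virtue of isolating a reusable lemma; the paper's is somewhat more direct and, because it uses only isometries, avoids distortion factors entirely.
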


In this situation, we may have a minimiser $u \in u_0 + W_0^{1, \infty}(\Omega; \R^N)$
such that $\|Du\|_{L^\infty(K)} < e_\infty$ for any compact set $K \subset \Omega$.
Then for any $\phi \in W_0^{1, \infty}(\Omega; \R^N)$ with
compact support, there exists $\delta > 0$ such that $u_\infty + t\phi$ is
also a minimiser of $E_\infty$ for all $t \in (-\delta, \delta)$.
Thus the class of minimisers is simply too large to expect any general statements. (For example,
let $n = 2$ and $N = 1$. Let $r = \sqrt{2}/(\sqrt{2} + 1)$ and consider the domain
$\Omega = B_r(r, r) \subseteq \R^2$. Suppose that $u_0(x_1, x_2) = x_1^{4/3} - x_2^{4/3}$. This happens
to be a minimiser of $E_\infty$ for its boundary values \cite{Aronsson:84}. We find that $e_\infty = e_\infty' = 4\sqrt{2}/3$, which
is the value of $|D u_0|$ at $(1, 1)$ but nowhere else in $\overline{\Omega}$.)
If $e_\infty' < e_\infty$, on the other hand, Theorem \ref{thm:main} rules out such behaviour.

Variational problems in $L^\infty$ have long been studied predominantly with methods involving comparison
arguments and viscosity solutions of the corresponding partial differential equations. Even so, measure theoretic
arguments have also made an appearance in the literature in various contexts
\cite{Evans-Yu:05, Champion-DePascale-Jimenez:09, Bungert-Korolev:22, Katzourakis:22}.
This also includes a paper with a more geometric point of view by Daskalopoulos and Uhlenbeck \cite{Daskalopoulos-Uhlenbeck:22},
which is motivated by work of Thurston \cite{Thurston:98}. Their paper studies `$\infty$-harmonic maps' from
a hyperbolic manifold to the circle and shows (among other things) that the locus of maximum
stretch is a geodesic lamination and is contained in the support of a certain $(n - 1)$-current.
(It should be noted here that $(n - 1)$-currents can be identified with $1$-currents via the Hodge star operator.)
There is thus some overlap with the above theory, but the paper of Daskalopoulos and Uhlenbeck is
restricted to one-dimensional targets and its scope is different.
Further related results can be found in a work by Backus \cite{Backus:24}.

\begin{notation}
The following notation will be convenient. Given $r > 0$, we define
$\Omega_r = \set{x \in \Omega}{\dist(x, \partial \Omega) < r}$.
Given two matrices $A, B \in \R^{N \times n}$, we write $A : B$ for their Frobenius inner product.
\end{notation}

\section{Currents and what they say about $E_\infty$} \label{sct:currents}

In this section, we give some more information on the relationship between $N$-tuples of $1$-currents and
the functions minimising $E_\infty$ in $u_0 + W_0^{1, \infty}(\Omega; \R^N)$.

We first formulate a global version of Definition \ref{def:restriction-local}. We cannot simply use the
convolution with a mollifying kernel any more, because this would cause problems at the boundary.
Instead, we use the following.

\begin{definition}[Regular mollifier] \label{def:mollifier}
A family of linear operators $\mathcal{M}_\epsilon \colon L^\infty(\Omega) \to C^\infty(\overline{\Omega})$, for $\epsilon > 0$, is called
a \emph{regular mollifier} if
\begin{enumerate}
\item \label{itm:pointwise-bound}
there exists $\theta \colon (0, \infty) \to [0, \infty)$ such that
$\lim_{\epsilon \searrow 0} \theta(\epsilon) = 0$ and
\[
|(\mathcal{M}_\epsilon f)(x)| \le \|f\|_{L^\infty(\Omega \cap B_\epsilon(x))} + \theta(\epsilon)
\]
for all $x \in \overline{\Omega}$;
\item if $f \in C^0(\overline{\Omega})$, then $\mathcal{M}_\epsilon f \to f$ uniformly as $\epsilon \searrow 0$; and
\item \label{itm:commutator}
if $f \in W^{1, \infty}(\Omega)$, then
\[
\left|\mathcal{M}_\epsilon \dd{f}{x_j} - \dd{}{x_j} \mathcal{M}_\epsilon f\right| \to 0
\]
in $C^0(\overline{\Omega})$ as $\epsilon \searrow 0$ for $j = 1, \dotsc, n$.
\end{enumerate}
\end{definition}

Regular mollifiers can be constructed by modifying the usual convolution with a mollifying
kernel. For example, a suitable approach is used in a recent paper by the authors
\cite[Proof of Lemma 7]{Katzourakis-Moser:23}. Some tools for a different construction are
discussed in a paper by the first author \cite[Section 5]{Katzourakis:22}.

\begin{definition}[$L^2$-representative] \label{def:restriction-global}
Let $\mu$ be a (non-negative)
Radon measure on $\overline{\Omega}$ and let $f \in L^\infty(\Omega)$. We say that $g \in L^2(\mu)$ is the
\emph{$L^2$-representative} of $f$ with respect to $\mu$, and we write
\[
g = \rep{f}{\mu},
\]
if for every regular mollifier $\mathcal{M}_\epsilon$,
\[
\lim_{\epsilon \searrow 0} \int_{\overline{\Omega}} |g - \mathcal{M}_\epsilon f|^2 \, d\mu = 0.
\]
The concept is defined similarly for vector-valued functions.
\end{definition}

We have the following connection between currents and the functional $E_\infty$.

\begin{proposition} \label{prp:optimal-current-global}
Suppose that $T = (T_1, \dotsc, T_N)$ is a normal $N$-tuple of $1$-currents in $\R^n$ with $\supp T \subseteq \overline{\Omega}$.
Let $u \in W^{1, \infty}(\Omega; \R^N)$. Then $\partial T(u) \le E_\infty(u) \M(T)$, with equality if, and only if,
$\rep{D u}{\|T\|} = E_\infty(u) \vec{T}$.
\end{proposition}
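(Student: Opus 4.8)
The plan is to prove the inequality first and then analyse the equality case. For the inequality, fix a regular mollifier $\mathcal{M}_\epsilon$ and consider the smooth functions $u^\epsilon = \mathcal{M}_\epsilon u \in C^\infty(\overline{\Omega}; \R^N)$, which we may multiply by a cutoff or extend to all of $\R^n$ so that each component can be tested against $\partial T_k$ as a compactly supported smooth function. Since $\partial T(u^\epsilon) = \sum_k \partial T_k(u_k^\epsilon) = \sum_k T_k(d u_k^\epsilon) = \sum_k \int_{\overline{\Omega}} d u_k^\epsilon(\vec{T}_k) \, d\|T\|$, and since $d u_k^\epsilon(\vec{T}_k) = D u_k^\epsilon \cdot \vec{T}_k$ with $\sum_k |\vec{T}_k|^2 = 1$, the Cauchy–Schwarz inequality in the $(N \times n)$-matrix variable gives $\sum_k D u_k^\epsilon \cdot \vec{T}_k \le |D u^\epsilon|$ pointwise. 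Property \ref{itm:commutator} of the regular mollifier lets us replace $D u^\epsilon$ by $\mathcal{M}_\epsilon(Du)$ up to an error tending to $0$ in $C^0(\overline{\Omega})$, and property \ref{itm:pointwise-bound} bounds $|\mathcal{M}_\epsilon(Du)|$ by $\|Du\|_{L^\infty(\Omega \cap B_\epsilon(x))} + \theta(\epsilon) \le E_\infty(u) + o(1)$. Integrating against $\|T\|$ and using $\|T\|(\overline{\Omega}) < \infty$ (finite mass), we get $\partial T(u^\epsilon) \le (E_\infty(u) + o(1))\,\M(T) + o(1)$. Finally $\partial T(u^\epsilon) \to \partial T(u)$ because $\partial T$ is represented by a finite $\R^N$-valued Radon measure and $u^\epsilon \to u$ uniformly on compact sets (property (ii) of the mollifier, together with $\supp \partial T \subseteq \overline{\Omega}$), so passing to the limit yields $\partial T(u) \le E_\infty(u) \M(T)$.

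For the equality case, I first record that the above argument produces a sharper bound: tracking the pointwise estimate,
\[
\partial T(u) = \lim_{\epsilon \searrow 0} \int_{\overline{\Omega}} \mathcal{M}_\epsilon(Du) : \vec{T} \, d\|T\|,
\]
where I now write $:$ for the Frobenius inner product and interpret $\vec{T}$ as the $(N\times n)$-matrix field with rows $\vec{T}_k$. The inequality $\partial T(u) \le E_\infty(u)\M(T)$ came from estimating the integrand by $E_\infty(u)|\vec{T}| = E_\infty(u)$. Assume equality holds. Then, decomposing $\mathcal{M}_\epsilon(Du) = \big(\mathcal{M}_\epsilon(Du) - E_\infty(u)\vec{T}\big) + E_\infty(u)\vec{T}$ and using $\int |\vec{T}|^2 d\|T\| = \M(T)$, a standard Hilbert-space argument shows that equality forces $\mathcal{M}_\epsilon(Du) \to E_\infty(u)\vec{T}$ in $L^2(\|T\|)$: indeed, writing $c = E_\infty(u)$, we have $\int |\mathcal{M}_\epsilon(Du) - c\vec{T}|^2\,d\|T\| = \int |\mathcal{M}_\epsilon(Du)|^2 d\|T\| - 2c\int \mathcal{M}_\epsilon(Du):\vec{T}\,d\|T\| + c^2\M(T)$; the middle term tends to $2c\,\partial T(u) = 2c^2\M(T)$, while $\limsup_\epsilon \int |\mathcal{M}_\epsilon(Du)|^2 d\|T\| \le c^2 \M(T)$ by property \ref{itm:pointwise-bound} (after the commutator correction), so the whole expression has nonpositive $\limsup$, hence tends to $0$. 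By Definition \ref{def:restriction-global} this says precisely $\rep{Du}{\|T\|} = E_\infty(u)\vec{T}$. Conversely, if $\rep{Du}{\|T\|} = E_\infty(u)\vec{T}$, then $\int \mathcal{M}_\epsilon(Du):\vec{T}\,d\|T\| \to \int E_\infty(u)|\vec{T}|^2 d\|T\| = E_\infty(u)\M(T)$, and since the left side also converges to $\partial T(u)$, equality holds.

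The main obstacle I anticipate is the limit $\partial T(u^\epsilon) \to \partial T(u)$ and, relatedly, the need to keep the test functions legitimately in $\D^0(\R^n)$ (compactly supported and smooth): the mollifier $\mathcal{M}_\epsilon$ outputs functions on $\overline{\Omega}$, and $u$ itself is only defined on $\Omega$, so one must either invoke an extension operator compatible with the regular mollifier or exploit that $\partial T$ is a finite measure supported in the compact set $\overline{\Omega}$ so that only the values of $u^\epsilon$ near $\overline{\Omega}$ matter. Once $\partial T$ is identified with an $\R^N$-valued Radon measure, the convergence reduces to dominated convergence against uniform convergence of $u^\epsilon$ to $u$ on $\overline{\Omega}$, which is property (ii) of the mollifier combined with continuity of $u$ up to the boundary (automatic since $u \in W^{1,\infty}(\Omega;\R^N)$ and $\Omega$ has smooth boundary). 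The remaining steps — the pointwise Cauchy–Schwarz estimate, the commutator correction, and the Hilbert-space equality analysis — are routine, so I would keep them brief and concentrate the exposition on making the two limit passages rigorous.
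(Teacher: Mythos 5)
Your proof is correct and follows essentially the same route as the paper: both use a regular mollifier, the pointwise bound from Definition \ref{def:mollifier}\ref{itm:pointwise-bound}, the commutator property \ref{itm:commutator}, and a completion-of-square / Cauchy--Schwarz argument in $L^2(\|T\|)$; the paper merely writes the square $\int |e\vec{T} - Du_\epsilon|^2\,d\|T\|$ from the outset, which yields the inequality and the forward direction of the equality claim in a single stroke, whereas you derive the inequality first via a pointwise Cauchy--Schwarz estimate and then repeat essentially the same expansion separately for the equality analysis. The subtleties you flag (extending $\mathcal{M}_\epsilon u$ off $\overline{\Omega}$, passing $\partial T(u_\epsilon) \to \partial T(u)$) are already handled by the paper's convention that $\partial T$ acts on $W^{1,\infty}(\Omega;\R^N)$ functions when $\supp \partial T \subseteq \overline{\Omega}$, together with normality; one minor point is that the paper explicitly dispenses with the degenerate case $E_\infty(u)=0$ before dividing by $e$, which your formulation avoids by not dividing.
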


\begin{proof}
If $E_\infty(u) = 0$, then the statement is trivial. We therefore assume that $E_\infty(u) > 0$.

Consider a regular mollifier $(\mathcal{M}_\epsilon)_{\epsilon > 0}$ and set $u_\epsilon = \mathcal{M}_\epsilon u$. We define
$e = E_\infty(u)$. Then we compute
\begin{equation} \label{eq:current-global}
\begin{split}
\int_{\overline{\Omega}} |e \vec{T} - D u_\epsilon|^2 \, d\|T\| & = \int_{\overline{\Omega}} (e^2 + |D u_\epsilon|^2) \, d\|T\| - 2e \int_{\overline{\Omega}} \vec{T} : D u_\epsilon \, d\|T\| \\
& = \int_{\overline{\Omega}} (e^2 + |D u_\epsilon|^2) \, d\|T\| - 2e \partial T(u_\epsilon).
\end{split}
\end{equation}
By conditions \ref{itm:pointwise-bound} and \ref{itm:commutator} in Definition \ref{def:mollifier}, we know that
\[
\limsup_{\epsilon \searrow 0} \|D u_\epsilon\|_{C^0(\overline{\Omega})} \le \|D u\|_{L^\infty(\Omega)} \le e.
\]
It follows that
\[
\limsup_{\epsilon \searrow 0} \int_{\overline{\Omega}} (e^2 + |D u_\epsilon|^2) \, d\|T\| \le 2e^2 \M(T).
\]
On the other hand, we have the uniform convergence $u_\epsilon \to u$. As $T$ is normal, it follows that
$\partial T(u) = \lim_{\epsilon \searrow 0} \partial T(u_\epsilon)$. Taking the limsup in
\eqref{eq:current-global}, we find that
\[
\partial T(u) + \frac{1}{2e} \limsup_{\epsilon \searrow 0} \int_{\overline{\Omega}} |e \vec{T} - D u_\epsilon|^2 \, d\|T\| \le e \M(T).
\]
Hence $\partial T(u) \le e\M(T)$. If we have equality, then it further follows that $\rep{D u}{\|T\|} = e\vec{T}$.

Conversely, suppose that $\rep{Du}{\|T\|} = e\vec{T}$. Then $\mathcal{M}_\epsilon Du \to e\vec{T}$ in
$L^2(\|T\|)$. By property \ref{itm:commutator} in Definition \ref{def:mollifier}, this means that
$D u_\epsilon \to e\vec{T}$ in $L^2(\|T\|)$ as well. Hence
\[
\partial T(u) = \lim_{\epsilon \searrow 0} \partial T(u_\epsilon) = \lim_{\epsilon \searrow 0} \int_{\overline{\Omega}} D u_\epsilon : \vec{T} \, d\|T\| = e \int_{\overline{\Omega}} |\vec{T}|^2 \, d\|T\| = e\M(T),
\]
as claimed.
\end{proof}

\begin{remark}
These arguments also show that if $\mathcal{M}_\epsilon Du \to E_\infty(u) \vec{T}$
in $L^2(\|T\|)$ for some \emph{specific} regular mollifier $\mathcal{M}_\epsilon$, then
$\rep{Du}{\|T\|} = E_\infty(u) \vec{T}$.
\end{remark}

\begin{corollary} \label{cor:minimal-current-global}
Suppose that $T = (T_1, \dotsc, T_N)$ is a normal $N$-tuple of $1$-currents in $\R^n$ with $\supp T \subseteq \overline{\Omega}$
and $\supp \partial T \subseteq \partial \Omega$. Then $\partial T(u_0) \le e_\infty \M(T)$. If equality holds and $e_\infty > 0$,
then $\M(S) \ge \M(T)$ for any normal $N$-tuple of $1$-currents $S$ such that $\supp S \subseteq \overline{\Omega}$ and
$\partial S = \partial T$.
\end{corollary}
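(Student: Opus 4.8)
The plan is to deduce both assertions directly from Proposition \ref{prp:optimal-current-global}. For the inequality $\partial T(u_0) \le e_\infty \M(T)$, I would first extend $u_0$ to a function in $W^{1,\infty}(\Omega;\R^N)$ — this is harmless since $u_0$ is already Lipschitz on $\R^n$ — and then apply Proposition \ref{prp:optimal-current-global} with $u = u_0$. This gives $\partial T(u_0) \le E_\infty(u_0)\,\M(T)$. To replace $E_\infty(u_0)$ by $e_\infty$, I would pick any minimiser $u_\infty \in u_0 + W_0^{1,\infty}(\Omega;\R^N)$ with $E_\infty(u_\infty) = e_\infty$ (or, if one does not wish to invoke existence of a minimiser, an almost-minimising sequence), note that $u_\infty - u_0 \in W_0^{1,\infty}(\Omega;\R^N)$ has compact-support boundary behaviour so that $\partial T(u_\infty) = \partial T(u_0)$ because $\supp \partial T \subseteq \partial \Omega$, and apply Proposition \ref{prp:optimal-current-global} to $u_\infty$ instead: $\partial T(u_0) = \partial T(u_\infty) \le E_\infty(u_\infty)\,\M(T) = e_\infty\,\M(T)$.

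For the second assertion, suppose $\partial T(u_0) = e_\infty\,\M(T)$ and $e_\infty > 0$. Let $u_\infty$ be a minimiser as above. From the equality case $\partial T(u_\infty) = \partial T(u_0) = e_\infty\,\M(T) = E_\infty(u_\infty)\,\M(T)$, the equality clause of Proposition \ref{prp:optimal-current-global} yields $\rep{D u_\infty}{\|T\|} = e_\infty\,\vec{T}$. Now let $S = (S_1,\dotsc,S_N)$ be any normal $N$-tuple with $\supp S \subseteq \overline{\Omega}$ and $\partial S = \partial T$. Applying Proposition \ref{prp:optimal-current-global} to $S$ and $u_\infty$ gives
\[
\partial S(u_\infty) \le E_\infty(u_\infty)\,\M(S) = e_\infty\,\M(S).
\]
Since $\partial S = \partial T$ and $\supp \partial T \subseteq \partial \Omega$, we have $\partial S(u_\infty) = \partial S(u_0) = \partial T(u_0) = e_\infty\,\M(T)$. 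Combining, $e_\infty\,\M(T) \le e_\infty\,\M(S)$, and dividing by $e_\infty > 0$ gives $\M(S) \ge \M(T)$.

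The only delicate points are bookkeeping ones: making sure that $\partial T(u)$ is well-defined for $u \in W^{1,\infty}(\Omega;\R^N)$ (this is exactly the remark in the excerpt that $\supp \partial T \subseteq \overline{\Omega}$ makes $\partial T(u)$ meaningful for such $u$), and that $\partial T(u_\infty) = \partial T(u_0)$ whenever $u_\infty - u_0 \in W_0^{1,\infty}(\Omega;\R^N)$ — which follows because $\partial T$ is an $\R^N$-valued Radon measure supported on $\partial \Omega$ and $u_\infty - u_0$ vanishes there. I expect the main (minor) obstacle to be phrasing this last fact cleanly: one should approximate $u_\infty - u_0$ by compactly supported functions in $\Omega$ to see that $\partial T(u_\infty - u_0) = 0$, using normality of $T$ so that $\partial T$ pairs continuously against uniform limits. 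Everything else is a direct substitution into the proposition.
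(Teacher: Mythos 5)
Your proof is correct and follows essentially the same route as the paper's: apply Proposition~\ref{prp:optimal-current-global} to a minimiser $u_\infty$ (using $\partial T(u_0) = \partial T(u_\infty)$ since $\supp\partial T\subseteq\partial\Omega$) for the first inequality, then apply the same inequality to $S$ and chain it through $\partial S(u_0)=\partial T(u_0)=e_\infty\M(T)$ for the mass comparison. The intermediate extraction of $\rep{Du_\infty}{\|T\|}=e_\infty\vec{T}$ from the equality case is correct but unused; the paper omits it.
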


\begin{proof}
In order to prove the first statement, it suffices to choose a minimiser $u_\infty \in u_0 + W_0^{1, \infty}(\Omega; \R^N)$
of $E_\infty$ and apply Proposition \ref{prp:optimal-current-global} to $u_\infty$. If we have equality and if
$e_\infty > 0$, we apply the first statement to $S$. We conclude that
$\M(T) = e_\infty^{-1} \partial T(u_0) = e_\infty^{-1} \partial S(u_0) \le \M(S)$.
\end{proof}

The following are local versions of the above statements.

\begin{corollary} \label{cor:optimal-current-local}
Suppose that $T = (T_1, \dotsc, T_N)$ is an $N$-tuple of $1$-currents in $\R^n$ with locally finite mass
and with $\supp \partial T \cap \Omega = \emptyset$.
Let $u \in W^{1, \infty}(\Omega; \R^N)$. Then
\[
-T(ud\chi) \le E_\infty(u) \int_\Omega \chi \, d\|T\|
\]
for any $\chi \in C_0^\infty(\Omega)$ with $\chi \ge 0$. Equality holds for every such function $\chi$ if,
and only if, $\locrep{Du}{\|T\|} = E_\infty(u) \vec{T}$.
\end{corollary}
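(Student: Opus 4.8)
The plan is to mimic the computation in Proposition \ref{prp:optimal-current-global}, but localised by the cut-off $\chi$ and using ordinary mollification $\eta_\epsilon * u$ instead of a regular mollifier, which is legitimate because $\chi$ has compact support in $\Omega$. First I would reduce to the case $E_\infty(u) > 0$, since for $E_\infty(u) = 0$ the map $u$ is constant and both sides vanish (using $\supp\partial T \cap \Omega = \emptyset$, so $T(ud\chi) = \partial T(\chi u) = 0$ when $u$ is constant — note $\chi u$ has compact support in $\Omega$). Write $e = E_\infty(u)$, fix $\chi \in C_0^\infty(\Omega)$ with $\chi \ge 0$, pick a compact $K \Subset \Omega$ containing a neighbourhood of $\supp\chi$, and set $u_\epsilon = \eta_\epsilon * u$, which is smooth on a neighbourhood of $\supp\chi$ once $\epsilon$ is small.

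The key identity is the pointwise expansion, valid $\|T\|$-a.e.\ on $\supp\chi$,
\[
\chi\,|e\vec{T} - Du_\epsilon|^2 = \chi\,(e^2 + |Du_\epsilon|^2) - 2e\,\chi\,\vec{T}:Du_\epsilon,
\]
which after integration against $\|T\|$ gives
\[
\int_{\R^n} \chi\,|e\vec{T} - Du_\epsilon|^2\,d\|T\| = \int_{\R^n} \chi\,(e^2 + |Du_\epsilon|^2)\,d\|T\| - 2e\int_{\R^n} \chi\,\vec{T}:Du_\epsilon\,d\|T\|.
\]
The point is to identify the last term with a current evaluation: since $\chi$ has compact support in $\Omega$ and $\supp\partial T\cap\Omega=\emptyset$, for the smooth compactly supported function $\chi u_\epsilon$ we have $0 = \partial T(\chi u_\epsilon) = T(d(\chi u_\epsilon)) = T(u_\epsilon\,d\chi) + \int_{\R^n}\chi\,\vec{T}:Du_\epsilon\,d\|T\|$, using the representation $T_k = [\|T\|,\vec{T}_k]$. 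Hence $\int_{\R^n}\chi\,\vec{T}:Du_\epsilon\,d\|T\| = -T(u_\epsilon\,d\chi)$. Substituting and using $\|Du_\epsilon\|_{L^\infty(\supp\chi)} \le \|Du\|_{L^\infty(\Omega)} \le e$ for small $\epsilon$, together with $\chi\ge 0$, we get $\int\chi\,|Du_\epsilon|^2\,d\|T\| \le e^2\int\chi\,d\|T\|$, so
\[
-2e\,T(u_\epsilon\,d\chi) + \int_{\R^n}\chi\,|e\vec{T}-Du_\epsilon|^2\,d\|T\| \le 2e^2\int_{\R^n}\chi\,d\|T\|.
\]
Since $u_\epsilon \to u$ uniformly on $\supp(d\chi)$ and $T$ has locally finite mass, $T(u_\epsilon\,d\chi) \to T(u\,d\chi) = T(ud\chi)$; dropping the nonnegative integral and dividing by $2e$ yields $-T(ud\chi) \le e\int_{\R^n}\chi\,d\|T\|$, which is the claimed inequality (with $\int_{\R^n} = \int_\Omega$ since $\chi$ is supported in $\Omega$).

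For the equality case: if $-T(ud\chi) = e\int_\Omega\chi\,d\|T\|$ for every admissible $\chi$, then taking $\limsup$ in the displayed inequality forces $\int_{\R^n}\chi\,|e\vec{T}-Du_\epsilon|^2\,d\|T\| \to 0$ for each $\chi$; since $\chi$ ranges over all nonnegative functions in $C_0^\infty(\Omega)$ and the mollifier $\eta$ was arbitrary, this is exactly the statement $\locrep{Du}{\|T\|} = e\vec{T}$ by Definition \ref{def:restriction-local} (an arbitrary compact $K\Subset\Omega$ is covered by choosing $\chi \equiv 1$ on a neighbourhood of $K$). Conversely, if $\locrep{Du}{\|T\|} = e\vec{T}$, then $\eta_\epsilon * Du \to e\vec{T}$ in $L^2_{\loc}(\|T\|)$, hence $Du_\epsilon \to e\vec{T}$ in $L^2(\chi\,d\|T\|)$, and passing to the limit in $\int_{\R^n}\chi\,\vec{T}:Du_\epsilon\,d\|T\| = -T(u_\epsilon d\chi)$ gives $-T(ud\chi) = e\int_{\R^n}\chi\,|\vec{T}|^2\,d\|T\| = e\int_\Omega\chi\,d\|T\|$ since $|\vec{T}|^2 = 1$ $\|T\|$-a.e.

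The main obstacle is the bookkeeping that justifies $\partial T(\chi u_\epsilon) = 0$ and the subsequent integration-by-parts identity $T(d(\chi u_\epsilon)) = T(u_\epsilon d\chi) + \int\chi\,\vec{T}:Du_\epsilon\,d\|T\|$ at the level of currents with merely locally finite mass: one must check that $\chi u_\epsilon$ is a legitimate test object (it is smooth and compactly supported in $\Omega$ for small $\epsilon$), that $\supp\partial T\cap\Omega = \emptyset$ genuinely gives $\partial T(\chi u_\epsilon) = 0$, and that the product rule $d(\chi u_{\epsilon,k}) = u_{\epsilon,k}\,d\chi + \chi\,du_{\epsilon,k}$ combined with the representation $T_k = [\|T\|,\vec{T}_k]$ produces the stated splitting. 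Everything else is a routine repetition of the argument in Proposition \ref{prp:optimal-current-global} with the cut-off inserted.
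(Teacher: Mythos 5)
Your argument is correct and is essentially the paper's proof, just unpacked: the paper defines the truncated current $S_k(\omega) = T_k(\chi\omega)$, notes $\partial S(u) = -T(u\,d\chi)$ and $\M(S) = \int_\Omega \chi\,d\|T\|$, and then invokes Proposition~\ref{prp:optimal-current-global} (for the equality characterisation, via a regular mollifier built to agree with $\eta_\epsilon *$ on $\supp\chi$), whereas you carry out the identical expansion and integration by parts directly with $\chi$ inserted. One small caveat worth stating explicitly: the bound $\|Du_\epsilon\|_{L^\infty(\supp\chi)} \le E_\infty(u)$ you use requires $\eta \ge 0$, so when you say ``the mollifier $\eta$ was arbitrary'' you should restrict to nonnegative kernels (the paper's construction of a regular mollifier agreeing with $\eta_\epsilon *$ on $\supp\chi$ implicitly makes the same restriction, since condition \ref{itm:pointwise-bound} of Definition~\ref{def:mollifier} fails for signed $\eta$).
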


\begin{proof}
Fix $\chi \in C_0^\infty(\Omega)$ with $\chi \ge 0$.
Consider $S \in (\D_1(\R^n))^N$ defined by $S_k(\omega) = T_k(\chi \omega)$ for $\omega \in (\D^1(\R^n))$
and for $k = 1, \dotsc, N$.
Then $S$ is normal with $\partial S(u) = -T(ud\chi)$ and
\[
\M(S) = \int_\Omega \chi \, d\|T\|.
\]
The first statement therefore follows immediately from Proposition \ref{prp:optimal-current-global}.

Let $\eta \in C_0^\infty(\R^n)$ with $\int_{\R^n} \eta \, dx = 1$. Let $\eta_\epsilon(x) = \epsilon^{-n} \eta(x/\epsilon)$.
Then we can construct a regular
mollifier $\mathcal{M}_\epsilon$ such that $\mathcal{M}_\epsilon f = \eta_\epsilon * f$ in $\supp \chi$ for all
$f \in L^\infty(\Omega)$. Therefore, the second statement follows from Proposition \ref{prp:optimal-current-global}
and the remark after its proof.
\end{proof}

\begin{corollary} \label{cor:minimising-current-local}
Let $T = (T_1, \dotsc, T_N)$ be an $N$-tuple of $1$-currents in $\R^n$ with locally finite mass
and with $\supp \partial T \cap \Omega = \emptyset$.
Let $u \in u_0 + W_0^{1, \infty}(\Omega; \R^N)$. Suppose that
\[
-T(u d\chi) \ge e_\infty \int_\Omega \chi \, d\|T\|
\]
for every $\chi \in C_0^\infty(\Omega)$ with $\chi \ge 0$. Then the following holds true.
\begin{enumerate}
\item \label{itm:lower-bound}
For any open set $\Omega' \subseteq \Omega$ with $\supp T \cap \Omega' \neq \emptyset$,
\[
\esssup_{\Omega'} |D u| \ge e_\infty.
\]
\item \label{itm:locally-length-minimising}
Suppose that $e_\infty > 0$. Let $K \subset \Omega$ be a compact set.
Let $S \in (\D_1(\R^n))^N$ with locally finite mass
and with $\supp \partial S \cap \Omega = \emptyset$. If $S(\omega) = T(\omega)$ for every
$\omega \in (\D^1(\R^n))^N$ with $\omega = 0$ in $K$, then
\[
\|T\|(K) \le \|S\|(K).
\]
\end{enumerate}
\end{corollary}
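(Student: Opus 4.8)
The plan is to derive both statements from Corollary \ref{cor:optimal-current-local}. The hypothesis of the present corollary is the reverse of the inequality in that corollary, so combining the two forces equality, and then $\locrep{Du}{\|T\|} = e_\infty \vec{T}$. This already gives a strong handle on $Du$ along $\|T\|$. For \ref{itm:lower-bound}, suppose for contradiction that $\esssup_{\Omega'} |Du| < e_\infty$ on some open $\Omega'$ meeting $\supp T$. Pick $x_0 \in \Omega' \cap \supp T$ and a cutoff $\chi \in C_0^\infty(\Omega')$ with $\chi \ge 0$, $\chi(x_0) > 0$, so that $\int_\Omega \chi \, d\|T\| > 0$ (this uses that $\|T\|$ charges every neighbourhood of a support point). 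On the one hand, the hypothesis gives $-T(ud\chi) \ge e_\infty \int_\Omega \chi \, d\|T\|$. On the other hand, running the estimate behind Corollary \ref{cor:optimal-current-local} with $E_\infty$ replaced by the strictly smaller quantity $\esssup_{\Omega'} |Du|$ — which is legitimate because $d\chi$ is supported in $\Omega'$, so only the values of $Du$ on $\Omega'$ enter — yields $-T(ud\chi) \le (\esssup_{\Omega'} |Du|) \int_\Omega \chi \, d\|T\| < e_\infty \int_\Omega \chi \, d\|T\|$, a contradiction. Concretely, one repeats the computation in the proof of Proposition \ref{prp:optimal-current-global} with the local mollifier $\mathcal{M}_\epsilon f = \eta_\epsilon * f$ on $\supp \chi$, using $\limsup_{\epsilon \searrow 0}\|Du_\epsilon\|_{C^0(\supp\chi)} \le \esssup_{\Omega'}|Du|$.

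For \ref{itm:locally-length-minimising}, fix the compact set $K \subset \Omega$ and the competitor $S$, and fix $\delta > 0$ small enough that the $\delta$-neighbourhood $K_\delta$ of $K$ is still a compact subset of $\Omega$. Choose $\chi \in C_0^\infty(\Omega)$ with $0 \le \chi \le 1$, $\chi \equiv 1$ on $K$, and $\supp \chi \subseteq K_\delta$. The key point is the splitting $\omega = \chi\omega + (1-\chi)\omega$ for a test form $\omega$: the form $(1-\chi)\omega$ vanishes on $K$, so by hypothesis $S((1-\chi)\omega) = T((1-\chi)\omega)$, whence
\[
S(\chi\omega) - T(\chi\omega) = S(\omega) - T(\omega).
\]
Applying this with $\omega$ ranging over the representing data and taking masses, the currents $\chi\cdot S$ and $\chi\cdot T$ (in the notation of the proof of Corollary \ref{cor:optimal-current-local}, i.e.\ $\omega \mapsto S(\chi\omega)$ and $\omega \mapsto T(\chi\omega)$) have the same boundary, since $\partial(\chi S)(\sigma) - \partial(\chi T)(\sigma) = (S-T)(d(\chi\sigma)) = 0$ because $\chi\sigma$ vanishes outside $K_\delta \Subset \Omega$ while $S = T$ away from $K$ — here one checks $\chi\sigma$ is supported where the equality $S=T$ fails only if... more carefully, $d(\chi\sigma) = \chi\, d\sigma + \sigma\, d\chi$, and $\chi d\sigma$ need not vanish on $K$, so instead I argue directly: $\partial(\chi S) = \partial(\chi T)$ follows from the displayed identity applied to exact forms $\omega = d\sigma$ together with $S(d(\chi\sigma)) - S(\chi\, d\sigma) = S(\sigma\, d\chi)$ and the fact that $\sigma\, d\chi$ vanishes on $K$. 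Both $\chi S$ and $\chi T$ are normal (finite mass since $\chi$ is bounded and compactly supported, finite boundary mass by the same computation), and $\supp \partial(\chi T) \subseteq \supp\chi \cap (\supp\partial T \cup \supp d\chi)$, which is disjoint from $K$ and contained in $\Omega$.

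Now I invoke the mass-minimality already available to $\chi T$. By the equality case of Corollary \ref{cor:optimal-current-local} we have $\locrep{Du}{\|T\|} = e_\infty\vec T$, and by construction of $\chi T$ this upgrades to $\rep{Du}{\|\chi T\|} = e_\infty\, \overrightarrow{\chi T}$ (the $L^2$-representative with respect to the compactly supported measure $\|\chi T\| = \chi\|T\|$ agrees with the local one, using a regular mollifier that is plain convolution near $\supp\chi$). Then $\partial(\chi T)(u_0) = e_\infty \M(\chi T)$ by Proposition \ref{prp:optimal-current-global}, and since $u \in u_0 + W_0^{1,\infty}$ and $\supp\partial(\chi T) \subseteq \Omega$ is compact, $\partial(\chi T)(u_0) = \partial(\chi T)(u)$. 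Corollary \ref{cor:minimal-current-global} (or directly the argument in its proof) applied to the pair $\chi T$, $\chi S$ — which have equal boundary — then gives $\M(\chi S) \ge \M(\chi T)$. Finally,
\[
\|T\|(K) = \int_K d\|T\| = \int_K \chi\, d\|T\| \le \M(\chi T) \le \M(\chi S) = \int_{\overline\Omega}\chi\, d\|S\| \le \|S\|(\supp\chi) = \|S\|(K_\delta),
\]
and letting $\delta \searrow 0$ yields $\|T\|(K) \le \|S\|(K)$ by outer regularity of the Radon measure $\|S\|$.

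The main obstacle I expect is the bookkeeping around the boundaries of the localised currents $\chi T$ and $\chi S$ — verifying carefully that they coincide and are supported away from $K$ inside $\Omega$, so that Proposition \ref{prp:optimal-current-global} and Corollary \ref{cor:minimal-current-global} genuinely apply. The identity $S((1-\chi)\omega) = T((1-\chi)\omega)$ from the hypothesis is exactly what makes this work, but translating it into a clean statement about $\partial(\chi S) = \partial(\chi T)$ requires handling the term $\sigma\, d\chi$ correctly; the saving grace is that $d\chi$ is supported in $K_\delta \setminus K$, precisely where $S = T$. The passage from the local $L^2$-representative to the global one for $\chi T$ is routine given the remark after Proposition \ref{prp:optimal-current-global}, since near $\supp\chi$ one may take the regular mollifier to be ordinary convolution.
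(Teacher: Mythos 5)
Your proof of statement \ref{itm:lower-bound} follows essentially the same route as the paper: localise Corollary \ref{cor:optimal-current-local} to a neighbourhood of a point of $\supp T$ in $\Omega'$ and compare the resulting upper bound on $-T(u\,d\chi)$ with the lower bound from the hypothesis.

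For statement \ref{itm:locally-length-minimising}, your approach is a genuinely different and considerably more elaborate route than the paper's. The paper simply picks $\chi \in C_0^\infty(\Omega)$ with $\chi \equiv 1$ on $K$ and $0 \le \chi \le 1$, observes that $u\,d\chi$ vanishes on $K$ (since $d\chi = 0$ there), so $T(u\,d\chi) = S(u\,d\chi)$, and then chains the hypothesis with the first part of Corollary \ref{cor:optimal-current-local} applied to $S$:
\[
\int_\Omega \chi\, d\|T\| \le -\tfrac{1}{e_\infty} T(u\,d\chi) = -\tfrac{1}{e_\infty} S(u\,d\chi) \le \int_\Omega \chi\, d\|S\|,
\]
and finally lets $\chi$ approach the indicator of $K$. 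No localised currents, boundary identifications, or global $L^2$-representatives appear. Your construction of $\chi T$, $\chi S$, the verification that $\partial(\chi T) = \partial(\chi S)$, and the appeal to mass-minimality buy nothing over this direct chain.

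Moreover, your execution has a concrete gap. You assert that $\partial(\chi T)(u_0) = \partial(\chi T)(u)$ ``since $u \in u_0 + W_0^{1,\infty}$ and $\supp\partial(\chi T) \subseteq \Omega$ is compact,'' but this is exactly the wrong direction: $u$ and $u_0$ agree on $\partial\Omega$, not in the interior, and $\supp\partial(\chi T) \subseteq \supp d\chi$ is compactly contained in $\Omega$, where $u - u_0 \in W_0^{1,\infty}(\Omega;\R^N)$ need not vanish. For the same reason Corollary \ref{cor:minimal-current-global} cannot be invoked for $\chi T$ and $\chi S$: it requires $\supp\partial T \subseteq \partial\Omega$, which fails for the localised current. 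The repair is to drop $u_0$ and work with $u$ throughout: from $\rep{Du}{\|\chi T\|} = e_\infty \overrightarrow{\chi T}$ and Proposition \ref{prp:optimal-current-global} one obtains $\partial(\chi T)(u) = e_\infty \M(\chi T)$; since $\partial(\chi S)(u) = \partial(\chi T)(u)$ and, by the inequality of Proposition \ref{prp:optimal-current-global}, $\partial(\chi S)(u) \le E_\infty(u)\,\M(\chi S)$, one gets $\M(\chi T) \le \M(\chi S)$ once one knows $E_\infty(u) = e_\infty$. (This last equality is also silently used in the paper's final inequality; it holds in the only situation where the corollary is applied, namely $u = u_\infty$ a minimiser.) After this fix your route closes, but it remains a detour compared with the paper's two-line argument.
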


\begin{proof}
We may assume that $e_\infty > 0$, because \ref{itm:lower-bound} is trivial otherwise and \ref{itm:locally-length-minimising}
excludes $e_\infty = 0$.

Given an open set $\Omega' \subseteq \Omega$ with $\supp T \cap \Omega' \neq \emptyset$,
choose $\chi \in C_0^\infty(\Omega')$ with $\chi \ge 0$ and $\int_{\Omega'} \chi \, d\|T\| > 0$.
Choose an open set $\Omega'' \subseteq \Omega'$ with smooth boundary such that $\supp \chi \subset \Omega''$.
Applying Corollary \ref{cor:optimal-current-local} in $\Omega''$, we find that
\[
-T(u d\chi) \le \esssup_{\Omega''} |D u| \int_{\Omega''} \chi \, d\|T\|.
\]
But by the assumptions, we also have
\[
-T(u d\chi) \ge e_\infty \int_{\Omega''} \chi \, d\|T\|.
\]
Hence $\esssup_{\Omega'} |D u_\infty| \ge e_\infty$.

If $S$ and $K$ are as in the second statement, then we choose
$\chi \in C_0^\infty(\Omega)$ such that $\chi \equiv 1$ in $K$ and $0 \le \chi \le 1$ everywhere. Then
\[
\int_\Omega \chi \, d\|T\| \le - \frac{1}{e_\infty} T(u d\chi) = - \frac{1}{e_\infty} S(u d\chi) \le \int_\Omega \chi \, d\|S\|
\]
by Corollary \ref{cor:optimal-current-local}.
Since $\|T\|(K)$ and $\|S\|(K)$ are the limits of such integrals when $\chi$ approaches the
characteristic function of $K$, the claim follows.
\end{proof}

If $T$ has a mass minimising property as in Corollary \ref{cor:minimal-current-global} or Corollary
\ref{cor:minimising-current-local}, then we may think of it as a generalised length-minimising geodesic.
There is an Euler-Lagrange equation for this variational problem, which amounts to the condition that
\begin{equation} \label{eq:geodesic}
\sum_{k = 1}^N \int_\Omega \vec{T}_k \cdot D_{\vec{T}_k} \psi \, d\|T\| = 0
\end{equation}
for all $\psi \in C_0^\infty(\Omega; \R^n)$, where $D_{\vec{T}_k}$ denotes the directional derivative
in the direction of $\vec{T}_k$.
This equation can be derived
with the same arguments as for the more conventional (scalar-valued) mass minimising currents.
(These tools are even more common in the theory of varifolds \cite{Allard:72}, but this is just another side of the same coin for
this purpose.) We do not need to go into the details here, because we will obtain
the equation in a different way. Instead, we formulate a consequence.

\begin{proposition} \label{prp:pointwise}
Suppose that $T$ is an $N$-tuple of $1$-currents such that \eqref{eq:geodesic} holds true for all
$\psi \in C_0^\infty(\Omega; \R^n)$. Let $w \in W_0^{1, \infty}(\Omega; \R^N)$.
If $\locrep{Dw}{\|T\|} = 0$, then $w = 0$ on $\supp T$.
\end{proposition}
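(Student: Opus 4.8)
The plan is to test the Euler--Lagrange equation \eqref{eq:geodesic} against vector fields built from $w$ itself, so that the condition $\locrep{Dw}{\|T\|} = 0$ can be exploited. More precisely, for a test function $\chi \in C_0^\infty(\Omega)$ and an index $j$, I would like to plug $\psi = \chi w_j e_i$ (or a suitable combination) into \eqref{eq:geodesic} and observe that the term involving $D_{\vec T_k}(\chi w_j)$ splits, by the product rule, into a piece with $D\chi$ and a piece with $D w_j$. The piece with $Dw_j$ should vanish because $\locrep{Dw}{\|T\|} = 0$, so that the directional derivatives $D_{\vec T_k} w_j$ are zero $\|T\|$-almost everywhere in the $L^2(\|T\|)$ sense; what remains is an identity saying that a certain $\|T\|$-measurable function derived from $w$ is ``$\|T\|$-harmonic'' in a weak sense, i.e.\ satisfies $\sum_k \int_\Omega (w \cdot \text{something}) \, \vec T_k \cdot D_{\vec T_k}\chi \, d\|T\| = 0$ for all $\chi$. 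The natural quantity to track is $\tfrac{1}{2}|w|^2$, since $\sum_k \vec T_k \cdot D_{\vec T_k}(\tfrac12 |w|^2) = \sum_k \vec T_k \cdot (Dw)^\top w$ componentwise, and this again vanishes under the hypothesis.

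Concretely, the key steps in order would be: (1) Justify that $\locrep{Dw}{\|T\|} = 0$ lets us replace $Dw$ by $0$ inside $\|T\|$-integrals after mollification — this is exactly the content of Definition \ref{def:restriction-local}, so $\eta_\epsilon * Dw \to 0$ in $L^2_{\mathrm{loc}}(\|T\|)$; correspondingly $D(\eta_\epsilon * w) \to 0$ in $L^2_{\mathrm{loc}}(\|T\|)$ and $\eta_\epsilon * w$ converges in $L^2_{\mathrm{loc}}(\|T\|)$ to some $\|T\|$-measurable function, which I will call $w$ again by abuse (its local $L^2$-representative). (2) For $\chi \in C_0^\infty(\Omega)$ with $\chi \ge 0$, set $\psi = \chi\, (\eta_\epsilon * w)$ viewed appropriately as an $\R^n$-valued... — here I need to be careful, because \eqref{eq:geodesic} wants $\psi \in C_0^\infty(\Omega; \R^n)$ while $w$ is $\R^N$-valued. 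The right move is to use $\psi = \chi\, \vec T_k$ is not smooth, so instead take $\psi$ scalar times a coordinate vector and sum; the cleanest choice is to feed in, for each pair $(i,k)$, the test field $\psi = \chi\, (\eta_\epsilon * w_k)\, e_i$ and then sum over $i$, which produces $\sum_k \int_\Omega (\eta_\epsilon * w_k)\, \vec T_k \cdot D_{\vec T_k}\chi \, d\|T\| + \sum_k \int_\Omega \chi\, \vec T_k \cdot D_{\vec T_k}(\eta_\epsilon * w_k)\, d\|T\| = 0$. (3) Let $\epsilon \searrow 0$: the second sum tends to $0$ by step (1), and the first tends to $\sum_k \int_\Omega w_k\, \vec T_k \cdot D_{\vec T_k}\chi\, d\|T\|$. (4) Now iterate: having shown $\sum_k \int_\Omega w_k\, \vec T_k\cdot D_{\vec T_k}\chi\, d\|T\| = 0$ for all $\chi$, replace $\chi$ by $\chi\,(\eta_\epsilon * w_k)$... no — better, observe this says the vector measure $\sum_k w_k \vec T_k \otimes \vec T_k$ (a $\|T\|$-density) is divergence-free along $\|T\|$, then test with $\chi\, w$ again to get $\sum_k \int_\Omega w_k^2\, |\vec T_k|^2_{\text{-ish}}\, d\|T\| \le \varepsilon$-terms, forcing $w = 0$ $\|T\|$-a.e. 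A cleaner route for (4): test the identity from (3) with $\chi$ replaced by a sequence converging to $\tfrac12|w|^2 \cdot(\text{cutoff})$ and use that $\sum_k \vec T_k\cdot D_{\vec T_k}(\tfrac12|w|^2) = \sum_k w_k\, \vec T_k \cdot D_{\vec T_k} w_k = 0$ in $L^1(\|T\|)$, yielding $\int_\Omega |w|^2\, \rho\, d\|T\| = 0$ for a cutoff $\rho > 0$ on $\supp T \cap \Omega$, hence $w = 0$ there; since $w \in W_0^{1,\infty}$ vanishes on $\partial\Omega$ anyway, $w = 0$ on all of $\supp T$.

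\textbf{Main obstacle.} The delicate point is step (2)--(3): making rigorous that one may use a mollification of $w$ as the test vector field in \eqref{eq:geodesic} and pass to the limit, given that $w$ is only Lipschitz and $\|T\|$ may be Lebesgue-singular. The product rule $D_{\vec T_k}(\eta_\epsilon * w_k) $ must be interpreted as an honest directional derivative of the smooth function $\eta_\epsilon * w_k$, and one needs $\locrep{D(\eta_\epsilon * w)}{\|T\|} \to \locrep{Dw}{\|T\|} = 0$ uniformly enough on $\supp\chi$ — this is precisely where Definition \ref{def:restriction-local} is designed to be applied, but one has to check the convergence is in $L^2(\|T\|)$ and not merely weak, so that the pairing against the bounded densities $\vec T_k$ passes to the limit. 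The second subtlety is the final ``testing against $|w|^2$'' move in step (4): since $|w|^2$ is only Lipschitz, one approximates it by smooth $\chi$'s and must control $D_{\vec T_k}$ of the approximants in $L^2(\|T\|)$, again invoking the local $L^2$-representative machinery — the hypothesis $\locrep{Dw}{\|T\|}=0$ is exactly what kills the error. Everything else (choosing cutoffs, summing over coordinates, the fact that $w=0$ on $\partial\Omega$) is routine.
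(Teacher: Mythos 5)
Your overall strategy — test \eqref{eq:geodesic} with a vector field built from a mollification of $w$, then use $\locrep{Dw}{\|T\|} = 0$ to kill the terms that see $Dw$ — is the right one, and step (1) (passing $\eta_\epsilon * Dw \to 0$ in $L^2_{\mathrm{loc}}(\|T\|)$) is sound. But there is a genuine gap in steps (2)--(4), and it is structural, not just a matter of rigour.

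The problem is that the equation \eqref{eq:geodesic} sums over the target index $k$ \emph{internally}: you choose one $\psi \in C_0^\infty(\Omega;\R^n)$ and the sum $\sum_k \int \vec T_k \cdot D_{\vec T_k}\psi \, d\|T\|$ is then fixed. Your proposed tests $\psi = \chi\,(\eta_\epsilon * w_k)\,e_i$ yield, for each fixed $(i,k)$, the identity $\sum_{j=1}^N \int (\vec T_j)_i\, D_{\vec T_j}(\chi\, \eta_\epsilon * w_k)\, d\|T\| = 0$; summing these over $i$ produces $\sum_j \bigl(\sum_i (\vec T_j)_i\bigr) D_{\vec T_j}(\cdots)$, not the contraction $\vec T_k \cdot D_{\vec T_k}(\cdots)$ you want, and there is no choice of smooth $\psi$ that lets the internal index $j$ lock onto the external index $k$ of $w_k$. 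Consequently the intermediate identity $\sum_k\int w_k\, \vec T_k\cdot D_{\vec T_k}\chi\, d\|T\| = 0$ of your step (3) does not follow from \eqref{eq:geodesic}. Moreover, even if you grant that identity (it \emph{would} follow from $\partial T = 0$ in $\Omega$, but that is not a hypothesis here), your step (4) does not close: replacing $\chi$ by $\tfrac12|w|^2\rho$ and using the hypothesis to drop the $Dw$-piece leaves $\sum_k\int w_k\, \tfrac12|w|^2\, \vec T_k\cdot D_{\vec T_k}\rho\, d\|T\|=0$, which has the same outer structure as what you started with and gives no pointwise information on $|w|^2$.

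The missing idea is to put the position vector $x$ into the test field. The paper takes $\psi_\epsilon(x) = |w_\epsilon(x)|^2\chi(x)\, x$. Then $D\psi_\epsilon = |w_\epsilon|^2\chi\, I + x\otimes D(|w_\epsilon|^2\chi)$, so $\vec T_k\cdot D_{\vec T_k}\psi_\epsilon = |w_\epsilon|^2\chi\,|\vec T_k|^2 + (x\cdot\vec T_k)\, D_{\vec T_k}(|w_\epsilon|^2\chi)$; summing over $k$ and using $\sum_k |\vec T_k|^2 = 1$ produces the undifferentiated term $\int |w_\epsilon|^2\chi\, d\|T\|$ plus terms carrying either $D w_\epsilon$ (killed by the hypothesis as $\epsilon\searrow 0$) or $D\chi$ (killed as the cutoff is pushed to $\partial\Omega$, because $|w|^2 = O(r^2)$ in $\Omega_r$). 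This yields $\int|w|^2\, d\|T\|=0$ in one pass, without the need for the iterated or bootstrap argument you describe. The factor $x$ plays the role of an integrating factor that converts the divergence-form equation into a pointwise positivity statement, and it is precisely what your test functions lack.
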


\begin{proof}
Let $r > 0$. Let $\chi \in C_0^\infty(\Omega)$ with $\chi \equiv 1$ in $\Omega \setminus \Omega_r$ and
such that $|D\chi| \le 2r$. Let $\eta \in C_0^\infty(\R^n)$ with $\int_{\R^n} \eta \, dx = 1$ and set
$\eta_\epsilon(x) = \epsilon^{-n} \eta(x/\epsilon)$ for $\epsilon > 0$. Define $w_\epsilon = \eta_\epsilon * w$,
assuming that $w$ is extended (arbitrarily) to $\R^n$.

Set
\[
\psi_\epsilon(x) = |w_\epsilon(x)|^2 \chi(x) x
\]
for $x \in \Omega$. Then we may test \eqref{eq:geodesic} with $\psi_\epsilon$.
This gives
\[
\begin{split}
0 & = 2\sum_{k = 1}^N \int_\Omega x \cdot \vec{T}_k \, (w_\epsilon \otimes \vec{T}_k) : Dw_\epsilon \chi \, d\|T\| \\
& \quad + \sum_{k = 1}^N \int_\Omega x \cdot \vec{T}_k \, |w_\epsilon|^2 D_{\vec{T}_k}\chi  \, d\|T\| + \int_\Omega |w_\epsilon|^2 \chi \, d\|T\|.
\end{split}
\]

Since $\locrep{Dw}{\|T\|} = 0$, we have the convergence
\[
\int_{\supp D\chi} |Dw_\epsilon|^2 \, d\|T\| \to 0
\]
as $\epsilon \searrow 0$. Clearly $w_\epsilon \to w$ locally uniformly. Hence
\[
0 = \sum_{k = 1}^N \int_\Omega x \cdot \vec{T}_k \, |w|^2 D_{\vec{T}_k}\chi \, d\|T\| + \int_\Omega |w|^2 \chi \, d\|T\|.
\]
If we let $r \searrow 0$, then the first term vanishes in the limit, because
\[
|w|^2 \le N E_\infty(w) r^2
\]
in $\Omega_r$. Hence
\[
0 = \int_\Omega |w|^2 \, d\|T\|.
\]
It follows that $w = 0$ almost everywhere with respect to $\|T\|$. By the continuity of $w$, we conclude that $w = 0$
on $\supp T$.
\end{proof}

\section{Measure-function pairs and $L^p$-approximation} \label{sct:measure-function}

Here we introduce another tool from geometric measure theory, due to Hutchinson \cite{Hutchinson:86},
that is convenient for our purpose. We only discuss the $L^2$-version of Hutchinson's
theory here, because this is all we need.

In the second part of this section, we will apply it to minimisers of the functionals
\begin{equation} \label{eq:E_p}
E_p(u) = \left(\fint_\Omega |Du|^p \, dx\right)^{1/p}.
\end{equation}
The limit
$p \to \infty$ will eventually produce not just a minimiser of $E_\infty$, but also the $1$-currents
from Theorem \ref{thm:main}.

\begin{definition}[Measure-function pair] \label{def:measure-function-pair}
A \emph{measure-function pair} over $\overline{\Omega}$ with values in $\R^{N \times n}$ is a pair $(\mu, F)$,
where $\mu$ is a Radon measure on $\overline{\Omega}$ and $F \in L^2(\mu; \R^{N \times n})$.
\end{definition}

Hutchinson further defines weak and strong convergence of measure-function pairs. His formulation of weak convergence
is somewhat weaker than the following, but the strong convergence is the same.

\begin{definition}[Weak and strong convergence]
For $\ell \in \N$, let $M_\ell = (\mu_\ell, F_\ell)$ be measure-function pairs over $\overline{\Omega}$ with
values in $\R^{N \times n}$. Let $M_\infty = (\mu_\infty, F_\infty)$ be another such measure-function pair.
\begin{enumerate}
\item
We have the \emph{weak convergence} $M_\ell \rightharpoonup M_\infty$ as $\ell \to \infty$ if
\[
\lim_{\ell \to \infty} \int_{\overline{\Omega}} (\eta + F_\ell : \phi) \, d\mu_\ell = \int_{\overline{\Omega}} (\eta + F_\infty : \phi) \, d\mu_\infty
\]
for any $\eta \in C^0(\overline{\Omega})$ and any $\phi \in C^0(\overline{\Omega}; \R^{N \times n})$, and at the same time,
\[
\limsup_{\ell \to \infty} \int_{\overline{\Omega}} (1 + |F_\ell|^2) \, d\mu_\ell < \infty.
\]
\item
We have the \emph{strong convergence} $M_\ell \to M_\infty$ as $\ell \to \infty$ if
\[
\lim_{a \to \infty} \int_{\set{x \in \overline{\Omega}}{|F_\ell(x)| \ge a}} |F_\ell|^2 \, d\mu_\ell = 0
\]
uniformly in $\ell$ and
\[
\lim_{\ell \to \infty} \int_{\overline{\Omega}} \Phi(x, F_\ell(x)) \, d\mu_\ell(x) = \int_{\overline{\Omega}} \Phi(x, F_\infty(x)) \, d\mu_\infty
\]
for all $\Phi \in C_0^0(\overline{\Omega} \times \R^{N \times n})$.
\end{enumerate}
\end{definition}

Some of the key statements in this theory are summarised in the following proposition, which amounts to a variant of
\cite[Theorem 4.4.2]{Hutchinson:86}. It generalises well-known
results for weak and strong $L^2$-convergence for a fixed measure $\mu$.

\begin{proposition} \label{prp:weak-compactness}
For $\ell \in \N$, suppose that $M_\ell = (\mu_\ell, F_\ell)$ are measure-function pairs
over $\overline{\Omega}$ with values in $\R^{N \times n}$.
Let $M_\infty = (\mu_\infty, F_\infty)$ be another such measure function-pair.
Let $\Phi \colon \overline{\Omega} \times \R^{N \times n} \to \R$ be a continuous function
and suppose that there exists a constant $C > 0$ such that
$|\Phi(x, z)| \le C(|z|^2 + 1)$ for all $x \in \overline{\Omega}$ and $z \in \R^{N \times n}$.
\begin{enumerate}
\item
If 
\[
\limsup_{\ell \to \infty} \int_{\overline{\Omega}}(|F_\ell|^2 + 1) \, d\mu_\ell < \infty,
\]
then there exists a subsequence $(M_{\ell_m})_{m \in \N}$ that converges weakly.
\item If $(M_\ell)_{\ell \in \N}$ converges weakly to $M_\infty$, then
\[
\|F_\infty\|_{L^2(\mu_\infty)} \le \liminf_{\ell \to \infty} \|F_\ell\|_{L^2(\mu_\ell)}.
\]
\item If $(M_\ell)_{\ell \in \N}$ converges weakly to $M_\infty$ and
\[
\|F_\infty\|_{L^2(\mu_\infty)} = \lim_{\ell \to \infty} \|F_\ell\|_{L^2(\mu_\ell)},
\]
then the convergence is strong.
\item
If $(M_\ell)_{\ell \in \N}$ converges strongly to $M_\infty$, then
\[
\int_{\overline{\Omega}} \Phi(x, F_\infty(x)) \, d\mu_\infty(x) = \lim_{\ell \to \infty} \int_{\overline{\Omega}} \Phi(x, F_\ell(x)) \, d\mu_\ell(x).
\]
\end{enumerate}
\end{proposition}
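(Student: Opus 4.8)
The plan is to treat the four statements largely as in the classical theory of weak and strong $L^2$-convergence for a fixed measure, but keeping track of the varying measures $\mu_\ell$ via the associated measures on the product space $\overline{\Omega}\times\R^{N\times n}$. The basic device is this: to a measure-function pair $(\mu_\ell,F_\ell)$ associate the Radon measure $\nu_\ell$ on $\overline{\Omega}\times\R^{N\times n}$ that is the pushforward of $\mu_\ell$ under $x\mapsto(x,F_\ell(x))$, weighted suitably. The uniform bound $\limsup_\ell\int_{\overline{\Omega}}(|F_\ell|^2+1)\,d\mu_\ell<\infty$ then gives tightness (no mass escapes to $|z|=\infty$ because of the quadratic weight), so a subsequence of $(\nu_\ell)$ converges weakly-$*$ to some $\nu_\infty$; this yields the compactness assertion (i). One has to check that the limit measure again comes from a measure-function pair, i.e. that it disintegrates over $\overline{\Omega}$ with a barycentre $F_\infty\in L^2(\mu_\infty)$ where $\mu_\infty$ is the projection — this is exactly where Hutchinson's structure theorem is invoked, so I would simply cite \cite[Theorem 4.4.2]{Hutchinson:86} for this part rather than reprove it, since the statement says the proposition \emph{amounts to a variant} of that theorem.

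For (ii), lower semicontinuity of the $L^2$-norm, I would use the standard convexity/duality trick: for any $\phi\in C^0(\overline{\Omega};\R^{N\times n})$ with $\sup_{\overline{\Omega}}|\phi|\le 1$ one has the elementary pointwise inequality $|F_\ell|^2\ge 2F_\ell:\phi-|\phi|^2\ge 2F_\ell:\phi-1$, hence $\int_{\overline{\Omega}}|F_\ell|^2\,d\mu_\ell\ge 2\int_{\overline{\Omega}}F_\ell:\phi\,d\mu_\ell-\mu_\ell(\overline{\Omega})$. Actually it is cleaner to test against $\phi$ times a cutoff and avoid the $\mu_\ell(\overline{\Omega})$ term; more precisely, for $\phi\in C^0(\overline{\Omega};\R^{N\times n})$ arbitrary, $\int|F_\ell|^2\,d\mu_\ell\ge 2\int F_\ell:\phi\,d\mu_\ell-\int|\phi|^2\,d\mu_\ell$. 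Passing to the limit using weak convergence (the term $\int F_\ell:\phi\,d\mu_\ell\to\int F_\infty:\phi\,d\mu_\infty$ is part of the definition, and $\int|\phi|^2\,d\mu_\ell\to\int|\phi|^2\,d\mu_\infty$ because $|\phi|^2\in C^0(\overline{\Omega})$), we get $\liminf_\ell\int|F_\ell|^2\,d\mu_\ell\ge 2\int F_\infty:\phi\,d\mu_\infty-\int|\phi|^2\,d\mu_\infty$, and then taking the supremum over $\phi$ (which by density of $C^0$ in $L^2(\mu_\infty)$ approaches $F_\infty$) yields $\liminf_\ell\|F_\ell\|_{L^2(\mu_\ell)}^2\ge\|F_\infty\|_{L^2(\mu_\infty)}^2$.

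For (iii), assuming in addition $\|F_\ell\|_{L^2(\mu_\ell)}\to\|F_\infty\|_{L^2(\mu_\infty)}$, I want to upgrade to strong convergence. The natural route is again via the product measures: weak-$*$ convergence $\nu_\ell\rightharpoonup\nu_\infty$ together with convergence of the $\int|z|^2\,d\nu_\ell$ to $\int|z|^2\,d\nu_\infty$ forces the limit measure $\nu_\infty$ to be concentrated on the graph of $F_\infty$ (no ``spreading'' in the $z$-variable), which is precisely the first, uniform-integrability condition in the definition of strong convergence; the second condition, convergence of $\int\Phi(x,F_\ell)\,d\mu_\ell$ for $\Phi\in C_0^0(\overline{\Omega}\times\R^{N\times n})$, then follows by testing weak-$*$ convergence of $\nu_\ell$ against $\Phi$ itself. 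Finally (iv) is immediate once strong convergence is established: given $\Phi$ with $|\Phi(x,z)|\le C(|z|^2+1)$, split $\Phi=\Phi\mathbf{1}_{|z|<a}+\Phi\mathbf{1}_{|z|\ge a}$; the truncated part is handled by approximating $\Phi\mathbf{1}_{|z|<a}$ by functions in $C_0^0(\overline{\Omega}\times\R^{N\times n})$ and using the second strong-convergence condition, while the tail is controlled uniformly in $\ell$ by the first, uniform-integrability condition together with the quadratic bound on $\Phi$, then letting $a\to\infty$.

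The main obstacle is part (iii): converting the equality of norms into genuine strong convergence. For a fixed measure this is the familiar fact that weak convergence plus norm convergence implies strong convergence in a Hilbert space, but with the measures themselves varying one cannot subtract $F_\ell-F_\infty$ directly, so the argument has to be rerouted entirely through the disintegration of the limit product measure $\nu_\infty$ and the observation that a probability measure on $\R^{N\times n}$ with given barycentre and with second moment equal to the squared norm of that barycentre must be a Dirac mass. Making this rigorous — in particular the disintegration and the selection of $F_\infty$ as a genuine $L^2(\mu_\infty)$ function — is the technical heart, and here I would lean on Hutchinson's theorem rather than redo the measure-theoretic bookkeeping from scratch.
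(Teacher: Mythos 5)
Your proposal is correct and takes essentially the same route as the paper: for statements (i)--(iii) you ultimately defer to Hutchinson's Theorem~4.4.2 (as the paper does, with a single citation), and for (iv) you give the direct truncation argument from the definition of strong convergence, which is exactly the alternative the paper's parenthetical remark alludes to. Your expanded duality argument for (ii) is a standard and correct supplement, but it does not change the overall structure of the argument.
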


\begin{proof}
The first three statements are from \cite[Theorem 4.4.2]{Hutchinson:86}. The final statement is
a little stronger than what is stated in Hutchinson's paper, but can be proved with the same
arguments. (It is also quite easy to prove directly from the definition of strong convergence.)
\end{proof}

We will apply these concepts to measures and functions generated by $L^p$-approximations
of minimisers of $E_\infty$. For $2 \le p < \infty$, we therefore consider the functionals
$E_p$ defined in \eqref{eq:E_p}. Since they are strictly convex,
there exists a unique minimiser $u_p \in u_0 + W_0^{1, p}(\Omega; \R^n)$ of $E_p$
for each $p < \infty$,
which satisfies the Euler-Lagrange equation
\begin{equation} \label{eq:Euler-Lagrange-p}
\div(|D u_p|^{p - 2} D u_p) = 0
\end{equation}
weakly in $\Omega$. Furthermore, for any $\psi \in C_0^\infty(\Omega; \R^n)$, the condition
\[
0 = \left.\frac{d}{dt}\right|_{t = 0} E_p(u_p \circ(\operatorname{id}_\Omega + t\psi))
\]
gives rise to
\begin{equation} \label{eq:Pohozaev-p}
0 = \int_\Omega |D u_p|^{p - 2} \left(\sum_{i, j = 1}^n \dd{u_p}{x_i} \cdot \dd{u_p}{x_j} \, \dd{\psi_i}{x_j} - \frac{1}{p} |D u_p|^2 \div \psi\right) \, dx.
\end{equation}

Fix $v \in u_0 + W_0^{1, \infty}(\Omega; \R^N)$. If $2 \le q < p < \infty$, then we observe that
\begin{equation} \label{eq:monotonicity-1}
E_q(u_q) \le E_q(u_p) \le E_p(u_p) \le E_p(v) \le E_\infty(v)
\end{equation}
by H\"older's inequality and the definition of $u_p$. Hence the family $(u_p)_{p \in [2, \infty)}$ is
bounded in $W^{1, q}(\Omega; \R^N)$ for every $q < \infty$. Therefore, there exists a sequence
$p_\ell \to \infty$ such that $u_{p_\ell} \to u_\infty$ weakly in all of these spaces for some
limit $u_\infty \in u_0 + \bigcap_{q < \infty} W_0^{1, q}(\Omega; \R^N)$. We then estimate
\begin{equation} \label{eq:monotone-convergence}
E_\infty(u_\infty) = \lim_{q \to \infty} E_q(u_\infty) \le \lim_{q \to \infty} \liminf_{\ell \to \infty} E_q(u_{p_\ell}) \le \liminf_{\ell \to \infty} E_{p_\ell}(u_{p_\ell}) \le E_\infty(v),
\end{equation}
where we have used \eqref{eq:monotonicity-1} in the last two steps. Hence $u_\infty \in u_0+ W_0^{1, \infty}(\Omega; \R^N)$,
and $u$ is a minimiser of the functional $E_\infty$. In particular, it satisfies $E_\infty(u_\infty) = e_\infty$. We also define
$e_p = E_p(u_p)$. Then as in \eqref{eq:monotonicity-1}, we see that
\[
e_q \le e_p \le e_\infty
\]
whenever $q \le p \le \infty$. Moreover, the estimates in \eqref{eq:monotone-convergence} imply that
$e_p \to e_\infty$ monotonically.

We now define
\[
\mu_p = \frac{|D u_p|^{p - 2}}{e_p^{p - 2} \Le^n(\Omega)} \Le^n, 
\]
where $\Le^n$ denotes the Lebesgue measure on $\overline{\Omega}$.
These measures have also been studied by Evans and Yu \cite{Evans-Yu:05} in the case $N = 1$.
They should be considered not on their own, but in conjunction with the function $D u_p$.
Thus $M_p = (\mu_p, D u_p)$ naturally forms a measure-function pair over $\overline{\Omega}$.

We can make a few statements about $M_p$ immediately. The Euler-Lagrange equation \eqref{eq:Euler-Lagrange-p}
now becomes
\begin{equation} \label{eq:Euler-Lagrange}
\int_{\Omega} D u_p : D \phi \, d\mu_p = 0
\end{equation}
for every $\phi \in C_0^\infty(\Omega; \R^N)$. We also have
\begin{equation} \label{eq:Pohozaev}
\int_{\Omega} \left(\sum_{i, j = 1}^n \dd{u_p}{x_i} \cdot \dd{u_p}{x_j} \, \dd{\psi_i}{x_j} - \frac{1}{p} |D u_p|^2 \div \psi\right) \, d\mu_p = 0
\end{equation}
for every $\psi \in C_0^\infty(\Omega; \R^n)$ because of \eqref{eq:Pohozaev-p}.
Moreover, we observe that
\[
\int_{\overline{\Omega}} |D u_p|^2 \, d\mu_p = e_p^{2 - p} \fint_\Omega |D u_p|^p \, dx = e_p^2 \le e_\infty^2
\]
and
\[
\mu_p(\overline{\Omega}) = e_p^{2 - p} \fint_\Omega |D u_p|^{p - 2} \, dx \le e_p^{2 - p} (E_p(u_p))^{p - 2} = 1
\]
by H\"older's inequality. By Proposition \ref{prp:weak-compactness}, we may therefore replace $(p_\ell)_{\ell \in \N}$
with a subsequence such that $M_{p_\ell} \rightharpoonup M_\infty$ for some
measure-function pair $M_\infty = (\mu_\infty, F_\infty)$ over $\overline{\Omega}$. Then
\begin{equation} \label{eq:no-boundary}
\int_{\overline{\Omega}} F_\infty : D\phi \, d\mu_\infty = \lim_{\ell \to \infty} \int_{\overline{\Omega}} Du_{p_\ell} : D\phi \, d\mu_{p_\ell} = 0
\end{equation}
for every $\phi \in C_0^\infty(\Omega; \R^N)$ by \eqref{eq:Euler-Lagrange}.
We will see that \eqref{eq:Pohozaev} also gives a useful equation in the limit, but
we need to establish strong convergence first.

\section{Interesting boundary data}

In this section, we examine the case $e_\infty' < e_\infty$ in more detail. Recall that
\[
e_\infty = \inf_{u_0 + W_0^{1, \infty}(\Omega; \R^N)} E_\infty = E_\infty(u_\infty)
\]
and
\[
e_\infty' = \esssup_{\partial \Omega} |D' u_0|.
\]

If $e_\infty' < e_\infty$, then we have better control of the $p$-harmonic functions $u_p$
near the boundary $\partial \Omega$. Therefore, we can prove additional properties of the limiting
measure-function pair $M_\infty = (\mu_\infty, F_\infty)$.
The following estimates rely on identity \eqref{eq:Pohozaev}.
We use measure-function pairs over $\partial \Omega$ with values in $\R^N$ here, which are defined
similarly to Definition \ref{def:measure-function-pair}.

\begin{proposition} \label{prp:boundary-measures}
Suppose that $e_\infty' < e_\infty$. Then there exist measure-function pairs $(m_p, f_p)$ over $\partial \Omega$
with values in $\R^N$ such that
\begin{equation} \label{eq:boundary-measures}
\limsup_{p \to \infty} \int_{\partial \Omega} \left(1 + |f_p|^2\right) \, dm_p < \infty
\end{equation}
and
\begin{equation} \label{eq:boundary}
\int_\Omega Du_p : D\phi \, d\mu_p = \int_{\partial \Omega} f_p \cdot \phi \, dm_p
\end{equation}
for all $\phi \in C^\infty(\R^n; \R^N)$.
\end{proposition}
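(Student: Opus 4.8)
The plan is to represent the functional $\phi\mapsto\int_\Omega Du_p:D\phi\,d\mu_p$ on $C^\infty(\R^n;\R^N)$ as a boundary flux. It is well defined because $\Le^n(\partial\Omega)=0$ and $Du_p:D\phi\in L^1(\mu_p)$, and by \eqref{eq:Euler-Lagrange} it annihilates $C_0^\infty(\Omega;\R^N)$, so it is a distribution carried by $\partial\Omega$; the point of the proposition is that, when $e_\infty'<e_\infty$, it is in fact represented by a measure-function pair over $\partial\Omega$ whose mass is controlled uniformly in $p$. To exhibit this, fix $r_0>0$ small enough that $d=\dist(\blank,\partial\Omega)$ is smooth on $\overline{\Omega_{r_0}}$, and for $\delta\in(0,r_0)$ put $\Gamma_\delta=\{d=\delta\}$ and $\Omega^\delta=\{x\in\Omega:d(x)>\delta\}$. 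By standard interior regularity for the $p$-Laplace system, $u_p\in C^1(\Omega;\R^N)$, so $V_p:=|Du_p|^{p-2}Du_p$ is continuous on $\Omega$ with vanishing row-wise distributional divergence by \eqref{eq:Euler-Lagrange-p}. Applying the divergence theorem on $\Omega^\delta$ (after mollifying $V_p$, whose divergence is zero) gives, for every $\phi\in C^\infty(\R^n;\R^N)$,
\[
\int_{\Omega^\delta}Du_p:D\phi\,d\mu_p=\int_{\Gamma_\delta}f_p^\delta\cdot\phi\,dm_p^\delta,\qquad f_p^\delta=Du_p\,\nu_\delta,\quad m_p^\delta=\frac{|Du_p|^{p-2}}{e_p^{p-2}\Le^n(\Omega)}\,\Ha^{n-1}\restr\Gamma_\delta,
\]
where $\nu_\delta$ is the outer unit normal of $\Omega^\delta$; in particular $|f_p^\delta|\le|Du_p|$ on $\Gamma_\delta$.

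Viewing $(m_p^\delta,f_p^\delta)$, extended by zero, as a measure-function pair over $\overline\Omega$ with $\supp m_p^\delta=\Gamma_\delta$, I would let $\delta\searrow0$. The crucial ingredient, discussed below, is a near-boundary estimate: given $e_\infty'<e_\infty$, for every $\epsilon>0$ there are $r\in(0,r_0)$ and $p_0$ such that $\esssup_{\Omega_r}|Du_p|\le e_\infty'+\epsilon$ for $p\ge p_0$. Granting it, for $0<\delta<r$ and $p\ge p_0$,
\[
\int_{\overline\Omega}\bigl(1+|f_p^\delta|^2\bigr)\,dm_p^\delta\le\bigl(1+(e_\infty'+\epsilon)^2\bigr)\,\frac{(e_\infty'+\epsilon)^{p-2}}{e_p^{p-2}}\cdot\frac{\sup_{0<\sigma<r_0}\Ha^{n-1}(\Gamma_\sigma)}{\Le^n(\Omega)}=:A_p,
\]
and $A_p\to0$ as $p\to\infty$ because $e_p\to e_\infty>e_\infty'+\epsilon$. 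By Proposition \ref{prp:weak-compactness}, applied to measure-function pairs over $\overline\Omega$ with values in $\R^N$, along a sequence $\delta_j\searrow0$ the pairs $(m_p^{\delta_j},f_p^{\delta_j})$ converge weakly to some pair $(m_p,f_p)$; since $\int\eta\,dm_p^{\delta_j}=0$ for small $\delta_j$ whenever $\eta\in C^0(\overline\Omega)$ vanishes near $\partial\Omega$, we have $\supp m_p\subseteq\partial\Omega$, and lower semicontinuity gives $\int_{\partial\Omega}(1+|f_p|^2)\,dm_p\le A_p$. Passing to the limit in the slice identity — using $\int_{\Omega^{\delta_j}}Du_p:D\phi\,d\mu_p\to\int_\Omega Du_p:D\phi\,d\mu_p$, since $\mu_p$ is finite with $\mu_p(\partial\Omega)=0$, and $\int_{\Gamma_{\delta_j}}f_p^{\delta_j}\cdot\phi\,dm_p^{\delta_j}\to\int_{\partial\Omega}f_p\cdot\phi\,dm_p$ by weak convergence — yields $\int_\Omega Du_p:D\phi\,d\mu_p=\int_{\partial\Omega}f_p\cdot\phi\,dm_p$ for all $\phi\in C^\infty(\R^n;\R^N)$, i.e.\ \eqref{eq:boundary}. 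Finally $\limsup_{p\to\infty}\int_{\partial\Omega}(1+|f_p|^2)\,dm_p\le\limsup_{p\to\infty}A_p=0$, which is even stronger than \eqref{eq:boundary-measures}.

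The step I expect to be the real obstacle is the near-boundary estimate, and it is precisely here that the hypothesis $e_\infty'<e_\infty$ enters. I would establish it by a comparison argument: inside the collar $\Omega_r$ one constructs a competitor that equals $u_p$ on an inner slice $\Gamma_\delta$, equals $u_0$ on $\partial\Omega$, and interpolates through (a mollification of) $u_0$ in between, arranged so that its gradient does not exceed $e_\infty'+\epsilon$; minimality of $u_p$ then bounds $\int_{\Omega_r}|Du_p|^p$, and the interior gradient estimate converts this into the pointwise bound on a slightly smaller collar. The delicate points are that only the \emph{tangential} derivative of $u_0$ is controlled by $e_\infty'$, whereas its ambient Lipschitz constant may be larger, so the competitor must be assembled from tangential data together with the smoothness of $\partial\Omega$; and that $u_p\to u_\infty$ near $\partial\Omega$ only at a Hölder rate, which has to be absorbed in the interpolation. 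Identity \eqref{eq:Pohozaev}, which records how $|Du_p|$ reacts to inner variations of $\Omega$, is a natural tool for this part of the argument.
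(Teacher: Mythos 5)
Your strategy hinges on the near--boundary estimate that, for every $\epsilon>0$, there exist $r>0$ and $p_0$ with $\esssup_{\Omega_r}|Du_p|\le e_\infty'+\epsilon$ for all $p\ge p_0$. You acknowledge this as the ``real obstacle'' and do not prove it, but in fact the statement is false, so the gap cannot be filled. Take $\Omega=B_1(0)\subset\R^n$ with $n=N\ge 2$ and $u_0(x)=x$ (the example discussed in the Introduction). Then $u_0$ is $p$-harmonic for every $p$, so by strict convexity $u_p=u_0$ and $|Du_p|\equiv\sqrt{n}=e_\infty=e_p$ for all $p$. On the other hand, $|D'u_0|\equiv\sqrt{n-1}$ on $\partial\Omega$, so $e_\infty'=\sqrt{n-1}<e_\infty$ and the hypothesis of the proposition holds, yet $\esssup_{\Omega_r}|Du_p|=\sqrt{n}$ is not close to $e_\infty'$ for any $r$ or $p$. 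The same example refutes your ``even stronger'' conclusion that the $\limsup$ in \eqref{eq:boundary-measures} is zero: here $\mu_p$ is normalised Lebesgue measure, $Du_p:D\phi=\div\phi$, and the identity \eqref{eq:boundary} forces (up to normalisation) $m_p=\Ha^{n-1}\restr\partial\Omega/\Le^n(\Omega)$ with $f_p=\nu$, whence $\int_{\partial\Omega}(1+|f_p|^2)\,dm_p=2\Ha^{n-1}(\partial\Omega)/\Le^n(\Omega)$, a positive constant independent of $p$.

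The mechanism actually used in the paper is qualitatively different and does not try to show that $|Du_p|$ is small near the boundary (indeed it need not be). Instead, one regularises to $u_{p,\epsilon}$ to gain boundary regularity (Kristensen--Mingione), and then exploits the Pohozaev-type identity obtained from inner variations with $\psi=\nu$ on $\partial\Omega$. Writing $Du_{p,\epsilon}$ on $\partial\Omega$ in terms of its normal part $f_{p,\epsilon}$ and tangential part $g_{p,\epsilon}$, the identity produces a relation between the boundary integral weighted by $|f_{p,\epsilon}|^2$, the boundary integral weighted by $g_{p,\epsilon}^2+\epsilon^2$, and the interior energy, with a factor $p$ that makes the normal term dominate. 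The hypothesis $e_\infty'<e_\infty$ enters only through the bound $g_{p,\epsilon}^2+\epsilon^2\le c^2<e_\infty^2$ on the \emph{tangential} component; this forces the set where $|Du_{p,\epsilon}|$ is large on $\partial\Omega$ to be a set where $|f_{p,\epsilon}|$ is large, and the Pohozaev identity controls that in terms of $c^p+p\int_\Omega H_{p,\epsilon}(|Du_{p,\epsilon}|)$. After normalising by $e_p^{p-2}\Le^n(\Omega)$ and letting $\epsilon\to0$, this yields \eqref{eq:boundary-measures} and \eqref{eq:boundary} without any pointwise smallness of $|Du_p|$ near $\partial\Omega$. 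The parts of your argument that are sound (slicing, weak compactness of measure-function pairs over a neighbourhood of $\partial\Omega$, passage $\delta\to0$) would all go through once a correct uniform boundary energy estimate is in hand, but that estimate cannot be the one you propose.
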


Comparing the last identity with the usual integration by parts formula for $p$-harmonic functions,
we may think of $m_p$ as the restriction of $|Du_p|^{p - 2}$ to the boundary (up to rescaling)
and of $f_p$ as a representation of $\nu \cdot Du_p$, where $\nu$ denotes the outer normal vector
on $\partial \Omega$. In general, however, we do not know if we have enough regularity of $u_p$
up to the boundary to write down the formulas in these terms.

\begin{proof}
We first replace $u_p$ by the solutions of a different boundary value problem, for which we have the
better regularity theory. We fix $p \in [2, \infty)$ at first. For $\epsilon > 0$, define the
function $H_{p, \epsilon} \colon \R \to \R$ by $H_{p, \epsilon}(t) = (t^2 + \epsilon^2)^{p/2}$ for
$t \in \R$. Consider the functional
\[
E_{p, \epsilon}(u) = \left(\fint_\Omega H_{p, \epsilon}(|Du|) \, dx\right)^{1/p}.
\]
Choose $c \in (e_\infty', e_\infty)$.
Let $u_{0, \epsilon} \colon \R^n \to \R^N$ be smooth functions such that
\begin{itemize}
\item $u_{0, \epsilon} \to u_0$ in $W^{1, p}(\Omega; \R^N)$ as $\epsilon \searrow 0$ and
\item $|D'u_{0, \epsilon}|^2 + \epsilon^2 \le c^2$ on $\partial \Omega$ when $\epsilon$ is sufficiently small.
\end{itemize}
Let $u_{p, \epsilon} \in u_{0, \epsilon} + W_0^{1, p}(\Omega; \R^N)$ be the
unique minimiser of $E_{p, \epsilon}$ in this space. Then we have the Euler-Lagrange
equation
\begin{equation} \label{eq:Euler-Lagrange-regularised}
\div\left(\left(|Du_{p, \epsilon}|^2 + \epsilon^2\right)^{p/2 - 1} Du_{p, \epsilon}\right) = 0
\end{equation}
in $\Omega$.
There are theories for both interior and boundary regularity for this sort of problem. In particular,
results of Uhlenbeck \cite{Uhlenbeck:77} show that $u_{p, \epsilon}$ is smooth in the interior of $\Omega$.
Results of Kristensen and
Mingione \cite[Theorem 1.1]{Kristensen-Mingione:10} show that $Du_{p, \epsilon}$ belongs to
$W^{1/p + s, p}(\Omega; \R^{N \times n})$ and its trace on $\partial \Omega$ belongs to
$W^{s, p}(\partial \Omega; \R^{N \times n})$ for some $s > 0$. This is enough to carry out the
following computations.

Equation \eqref{eq:Euler-Lagrange-regularised} implies that
\[
\dd{}{x_j} H_{p, \epsilon}(|Du_{p, \epsilon}|) = p\div\left(\left(|Du_{p, \epsilon}|^2 + \epsilon^2\right)^{p/2 - 1} \dd{u_{p, \epsilon}}{x_j} \cdot Du_{p, \epsilon}\right)
\]
in $\Omega$ for $j = 1, \dotsc, n$.
We choose a smooth vector field $\psi \colon \R^n \to \R^n$ such that $\psi = \nu$ on $\partial \Omega$.
Then
\begin{multline*}
\div\left(H_{p, \epsilon}(|Du_{p, \epsilon}|) \psi - p \left(|Du_{p, \epsilon}|^2 + \epsilon^2\right)^{\frac{p}{2} - 1} D_\psi u_{p, \epsilon} \cdot Du_{p, \epsilon}\right) \\
= H_{p, \epsilon}(|Du_{p, \epsilon}|) \div \psi - p \left(|Du_{p, \epsilon}|^2 + \epsilon^2\right)^{\frac{p}{2} - 1} \sum_{i, j = 1}^n \dd{u_{p, \epsilon}}{x_i} \cdot \dd{u_{p, \epsilon}}{x_j} \, \dd{\psi_i}{x_j}.
\end{multline*}
We write $\Ha^{n - 1}$ for the $(n - 1)$-dimensional Hausdorff measure. Then
\begin{multline*}
\int_{\partial \Omega} \left(|Du_{p, \epsilon}|^2 + \epsilon^2\right)^{\frac{p}{2} - 1} \left(|Du_{p, \epsilon}|^2 + \epsilon^2 - p |D_\nu u_{p, \epsilon}|^2\right) \, d\Ha^{n - 1} \\
\begin{aligned}
& = \int_\Omega H_{p, \epsilon}(|D u_{p, \epsilon}|) \div \phi \, dx \\
& \quad - p \sum_{i, j = 1}^n \int_\Omega \left(|Du_{p, \epsilon}|^2 + \epsilon^2\right)^{\frac{p}{2} - 1} \dd{u_{p, \epsilon}}{x_i} \cdot \dd{u_{p, \epsilon}}{x_j} \, \dd{\psi_i}{x_j} \, dx.
\end{aligned}
\end{multline*}
Hence there exists a constant $C$, depending only on $n$ and $\Omega$, such that
\begin{multline*}
\left|\int_{\partial \Omega} \left(|Du_{p, \epsilon}|^2 + \epsilon^2\right)^{\frac{p}{2} - 1} \left(|Du_{p, \epsilon}|^2 + \epsilon^2 - p |D_\nu u_{p, \epsilon}|^2\right) \, d\Ha^{n - 1}\right| \\
\le Cp\int_\Omega H_{p, \epsilon}(|Du_{p, \epsilon}|) \, dx.
\end{multline*}
Write
\[
f_{p, \epsilon} = D_\nu u_{p, \epsilon} \quad \text{and} \quad g_{p, \epsilon} = |D'u_{p, \epsilon}| = \sqrt{|Du_{p, \epsilon}|^2 - |f_{p, \epsilon}|^2}
\]
on $\partial \Omega$. Then we may write the above inequality in the form
\begin{multline*}
\left|\int_{\partial \Omega} \left(|Du_{p, \epsilon}|^2 + \epsilon^2\right)^{\frac{p}{2} - 1} \left(g_{p, \epsilon}^2 + \epsilon^2 - (p - 1) |f_{p, \epsilon}|^2\right) \, d\Ha^{n - 1}\right| \\
\le Cp\int_\Omega H_{p, \epsilon}(|Du_{p, \epsilon}|) \, dx.
\end{multline*}
Hence
\begin{multline} \label{eq:boundary1}
\int_{\partial \Omega} \left(|Du_{p, \epsilon}|^2 + \epsilon^2\right)^{\frac{p}{2} - 1} |f_{p, \epsilon}|^2 \, d\Ha^{n - 1} \\
\begin{aligned}
& \le \frac{1}{p - 1} \int_{\partial \Omega} \left(|Du_{p, \epsilon}|^2 + \epsilon^2\right)^{\frac{p}{2} - 1} (g_{p, \epsilon}^2 + \epsilon^2) \, d\Ha^{n - 1} \\
& \quad + \frac{Cp}{p - 1} \int_\Omega H_{p, \epsilon}(|Du_{p, \epsilon}|) \, dx,
\end{aligned}
\end{multline}
and
\[
\begin{split}
\int_{\partial \Omega} H_{p, \epsilon}(|Du_{p, \epsilon}|) \, d\Ha^{n - 1} & = \int_{\partial \Omega} \left(|Du_{p, \epsilon}|^2 + \epsilon^2\right)^{\frac{p}{2} - 1} (|f_{p, \epsilon}|^2 + g_{p, \epsilon}^2 + \epsilon^2) \, d\Ha^{n - 1} \\
& \le \frac{p}{p - 1} \int_{\partial \Omega} \left(|Du_{p, \epsilon}|^2 + \epsilon^2\right)^{\frac{p}{2} - 1} (g_{p, \epsilon}^2 + \epsilon^2) \, d\Ha^{n - 1} \\
& \quad + \frac{Cp}{p - 1} \int_\Omega H_{p, \epsilon}(|Du_{p, \epsilon}|) \, dx \\
& \le \frac{p - 2}{p - 1} \int_{\partial \Omega} H_{p, \epsilon}(|Du_{p, \epsilon}|) \, d\Ha^{n - 1} \\
& \quad + \frac{2}{p - 1} \int_{\partial \Omega} \left(g_{p, \epsilon}^2 + \epsilon^2\right)^{p/2} \, d\Ha^{n - 1} \\
& \quad + \frac{Cp}{p - 1} \int_\Omega H_{p, \epsilon}(|Du_{p, \epsilon}|) \, dx.
\end{split}
\]
Here we have used Young's inequality in the last step. It follows that
\begin{equation} \label{eq:boundary-finite-p}
\int_{\partial \Omega} H_{p, \epsilon}(|Du_{p, \epsilon}|) \, d\Ha^{n - 1} \le 2 \int_{\partial \Omega} \left(g_{p, \epsilon}^2 + \epsilon^2\right)^{p/2} \, d\Ha^{n - 1} + Cp \int_\Omega H_{p, \epsilon}(|Du_{p, \epsilon}|) \, dx.
\end{equation}
Feeding this back into \eqref{eq:boundary1}, we obtain
\begin{equation} \label{eq:boundary2}
\begin{split}
\int_{\partial \Omega} \left(|Du_{p, \epsilon}|^2 + \epsilon^2\right)^{\frac{p}{2} - 1} |f_{p, \epsilon}|^2 \, d\Ha^{n - 1} & \le \frac{2}{p - 1} \int_{\partial \Omega} \left(g_{p, \epsilon}^2 + \epsilon^2\right)^{p/2} \, d\Ha^{n - 1} \\
& \quad + \frac{2Cp}{p - 1} \int_\Omega H_{p, \epsilon}(|Du_{p, \epsilon}|) \, dx.
\end{split}
\end{equation}

Define
\[
\Gamma_{p, \epsilon}^1 = \set{x \in \partial \Omega}{|f_{p, \epsilon}(x)|^2 \le e_p^2 - c^2}
\]
and $\Gamma_{p, \epsilon}^2 = \partial \Omega \setminus \Gamma_{p, \epsilon}^1$.
By the choice of the boundary data, we have $g_{p, \epsilon}^2 + \epsilon^2 \le c^2$ when $\epsilon$ is small, and thus
$|Du_{p, \epsilon}|^2 + \epsilon^2 \le e_p^2$ on $\Gamma_{p, \epsilon}^1$. Hence
\[
\int_{\Gamma_{p, \epsilon}^1} H_{p, \epsilon}(|Du_{p, \epsilon}|) \, d\Ha^{n - 1} \le e_p^p \Ha^{n - 1}(\partial \Omega).
\]
On $\Gamma_{p, \epsilon}^2$, we note that
\[
|f_{p, \epsilon}|^2 \ge \frac{e_p^2 - c^2}{c^2} (g_{p, \epsilon}^2 + \epsilon^2)
\]
for sufficiently small values of $\epsilon$. Therefore,
\[
|Du_{p, \epsilon}|^2 + \epsilon^2 = |f_{p, \epsilon}|^2 + g_{p, \epsilon}^2 + \epsilon^2 \le \frac{e_p^2}{e_p^2 - c^2} |f_{p, \epsilon}|^2
\]
on $\Gamma_{p, \epsilon}^2$. Hence
\[
\int_{\Gamma_{p, \epsilon}^2} H_{p, \epsilon}(|Du_{p, \epsilon}|) \, d\Ha^{n - 1} \le \frac{e_p^2}{e_p^2 - c^2} \int_{\partial \Omega} \left(|Du_{p, \epsilon}|^2 + \epsilon^2\right)^{\frac{p}{2} - 1} |f_{p, \epsilon}|^2 \, d\Ha^{n - 1}.
\]
Using \eqref{eq:boundary2}, we see that
\begin{multline*}
\int_{\Gamma_{p, \epsilon}^2} H_{p, \epsilon}(|Du_{p, \epsilon}|) \, d\Ha^{n - 1} \\
\begin{aligned}
& \le \frac{2e_p^2}{(p - 1)(e_p^2 - c^2)} \left(\int_{\partial \Omega} \left(g_{p, \epsilon}^2 + \epsilon^2\right)^{p/2} \, d\Ha^{n - 1} + Cp \int_\Omega H_{p, \epsilon}(|Du_{p, \epsilon}|) \, dx\right) \\
& \le \frac{2e_p^2}{(p - 1)(e_p^2 - c^2)} \left(c^p \Ha^{n - 1}(\partial \Omega) + Cp \int_\Omega H_{p, \epsilon}(|Du_{p, \epsilon}|) \, dx\right).
\end{aligned}
\end{multline*}
Since we know that $e_p \to e_\infty$ as $p \to \infty$, it follows that there exists a constant $C'$,
depending on $n$, $\Omega$, $e_\infty$, and $c$, such that
\begin{equation} \label{eq:boundary3}
\int_{\partial \Omega} H_{p, \epsilon}(|Du_{p, \epsilon}|) \, d\Ha^{n - 1} \le C'\left(e_p^p + \int_\Omega H_{p, \epsilon}(|Du_{p, \epsilon}|) \, dx\right),
\end{equation}
provided that $p$ is sufficiently large.

We now consider the limit $\epsilon \searrow 0$ (still for $p < \infty$ fixed).
Since $E_{p, \epsilon}(u_{p, \epsilon}) \le E_{p, \epsilon}(u_{0, \epsilon})$,
it is clear that the family of functions $u_{p, \epsilon}$ is bounded in $W^{1, p}(\Omega; \R^N)$.
We may therefore choose a sequence $\epsilon_k \searrow 0$ such that $u_{p, \epsilon_k} \rightharpoonup \tilde{u}_p$
weakly in this space. It is obvious that $\tilde{u}_p \in u_0 + W_0^{1, p}(\Omega; \R^N)$.

Let $v \in u_0 + W_0^{1, p}(\Omega; \R^N)$. Define $v_\epsilon = v + u_{0, \epsilon} - u_0$,
in order to obtain a function in $u_{0, \epsilon} + W_0^{1, p}(\Omega; \R^N)$. Then
\[
E_p(\tilde{u}_p) \le \liminf_{k \to \infty} E_{p, \epsilon_k}(u_{p, \epsilon_k}) \le \liminf_{k \to \infty} E_{p, \epsilon_k}(v_{\epsilon_k}) = E_p(v).
\]
Hence $\tilde{u}_p$ minimises $E_p$ in $u_0 + W_0^{1, p}(\Omega; \R^N)$. The strict convexity of
the functional implies that $\tilde{u}_p = u_p$. Inserting $v = u_p$ and using similar estimates,
we also see that
\[
\limsup_{k \to \infty} E_{p, \epsilon_k}(u_{p, \epsilon_k}) \le E_p(u_p).
\]
Hence the convergence $u_{p, \epsilon_k} \to u_p$ is strong in $W^{1, p}(\Omega; \R^N)$.

Consider the measures
\[
m_{p, \epsilon} = \frac{\left(|Du_{p, \epsilon}|^2 + \epsilon^2\right)^{\frac{p}{2} - 1}}{e_p^{p - 2} \Le^n(\Omega)} \Ha^{n - 1} \restr \partial \Omega.
\]
Using inequality \eqref{eq:boundary-finite-p}, and recalling that $g_{p, \epsilon}^2 + \epsilon^2 \le c$ when
$\epsilon$ is small, we see that
\[
\limsup_{\epsilon \searrow 0} \int_{\partial \Omega} \left(1 + |f_{p, \epsilon}|^2\right) \, dm_{p, \epsilon} < \infty
\]
for any fixed $p < \infty$.
Hence, by Proposition \ref{prp:weak-compactness}, we may assume that
the measure-function pairs $(m_{p, \epsilon_k}, f_{p, \epsilon_k})$ over $\partial \Omega$ converge
weakly to a measure-function pair $(m_p, f_p)$. Using \eqref{eq:Euler-Lagrange-regularised} and passing
to the limit, we obtain identity \eqref{eq:boundary}. Inequality \eqref{eq:boundary-measures} follows from \eqref{eq:boundary3}
and Proposition \ref{prp:weak-compactness}.
\end{proof}

We also briefly consider the case $e_\infty' = e_\infty$.
We conclude this section by proving Proposition \ref{prp:boundary-point}, thus showing that in this situation,
the minimum value of $E_\infty$ is dictated locally near a single boundary point.

\begin{proof}[Proof of Proposition \ref{prp:boundary-point}]
Suppose that $e_\infty' = e_\infty$.
Fix $u \in u_0 + W_0^{1, \infty}(\Omega; \R^N)$. We extend $u$ outside of $\Omega$ such that
it is Lipschitz continuous globally in $\R^n$.
Let $L$ denote the Lipschitz constant of this extension.

Define
\[
\alpha(x) = \lim_{r \searrow 0} \esssup_{\partial \Omega \cap B_r(x)} |D'u_0|
\]
for $x \in \partial \Omega$. This gives rise to an upper semicontinuous function with
\[
e_\infty = \sup_{x \in \partial \Omega} \alpha(x).
\]
Hence there exists $x \in \partial \Omega$ such that $e_\infty = \alpha(x)$. We fix this point now.

Let $\epsilon \in (0, \frac{1}{2}]$. Then there exists $r > 0$ such that
\[
\esssup_{\partial \Omega \cap B_r(x)} |D' u_0| > e_\infty - \epsilon.
\]
Since the restriction of $u_0$ to $\partial \Omega$
is Lipschitz continuous, it is differentiable almost everywhere
with respect to $\Ha^{n - 1}$. Hence we can find a point
$y \in \partial \Omega \cap B_r(x)$ such that $D'u_0(y)$ exists with
\[
|D' u_0(y)| \ge e_\infty - \epsilon.
\]
We may assume without loss of generality that $y = 0$ and $u_0(0) = 0$, and that the tangent space
of $\partial \Omega$ at $y$ is $\R^{n - 1} \times \{0\}$.
We write $A = D' u_0(y) = (a_{kj})_{1 \le k \le N, 1 \le j \le n - 1}$.
Then $|A| \le \sqrt{N}L$.

If $s > 0$ is sufficiently small, then
\[
\partial \Omega \cap [-s, s]^n \subseteq [-s, s]^{n - 1} \times [-\epsilon s, \epsilon s] \quad \text{and} \quad [-s, s]^{n - 1} \times [\epsilon s, s] \subseteq \Omega,
\]
while at the same time,
\[
|u_0(z') - Az'| \le \epsilon s
\]
for all $z' \in [-s, s]^n \cap \partial \Omega$. Set $\Sigma_\epsilon = [-s, s]^{n - 1} \times [\epsilon s, 2\epsilon s]$.
For any $z \in \Sigma_\epsilon$,
there exists $z' \in [-s, s]^n \cap \partial \Omega$ such that $|z - z'| \le 3\epsilon s$. Hence
\[
|u(z) - Az| \le |u(z) - u(z')| + |u_0(z') - Az'| + |A(z' - z)| \le \bigl(3(\sqrt{N} + 1)L + 1\bigr) \epsilon s
\]
for all $z \in \Sigma_\epsilon$.

Integrating $\dd{u_k}{x_j}$ along lines parallel to the $x_j$-axis, we see that there exists a constant
$C$, depending only on $n$, $N$, and $L$, such that for $1 \le k \le N$ and for $1 \le j \le n - 1$,
the inequality
\[
\left|\fint_{\Sigma_\epsilon} \dd{u_k}{x_j} \, dx - a_{kj}\right| \le C\epsilon
\]
holds true.
Thus there exists a subset of $\Sigma_\epsilon$ of positive measure where
$|Du| \ge |A| - C'\epsilon$ for another constant $C'$ depending only on $n$, $N$, and $L$.
(Otherwise we would conclude that
\[
\left|\fint_{\Sigma_\epsilon} Du \, dx - A\right| \ge |A| - \fint_{\Sigma_\epsilon} |Du| \, dx \ge C'\epsilon,
\]
using the reverse triangle inequality.) It follows that
\[
\esssup_{B_r(x)} |Du| \ge |A| - C'\epsilon \ge e_\infty - (C' + 1) \epsilon.
\]
As $\epsilon$ was chosen arbitrarily, this implies the claim.
\end{proof}

\section{Key estimates} \label{sct:key}

Throughout this section and the rest of the paper, the function $u_\infty$ and the
measure-function pair $M_\infty = (\mu_\infty, F_\infty)$ are as constructed in Section \ref{sct:measure-function}
and are fixed. Recall that $u_\infty$ is a minimiser of $E_\infty$.
If $e_\infty' < e_\infty$, then we consider the measure-function pairs
$(m_p, f_p)$ from Proposition \ref{prp:boundary-measures} as well. We may assume that
$(m_{p_k}, f_{p_k})$ converges weakly to a limiting measure-function pair $(m_\infty, f_\infty)$
over $\partial \Omega$, which will then satisfy
\begin{equation} \label{eq:boundary-measure}
\int_{\overline{\Omega}} F_\infty : D\phi \, d\mu_\infty = \int_{\partial \Omega} \phi \cdot f_\infty \, dm_\infty
\end{equation}
for all $\phi \in C_0^\infty(\R^n; \R^N)$. If $e_\infty' = e_\infty$, then we choose an arbitrary
measure-function pair $(m_\infty, f_\infty)$. Even then, we can still use equation \eqref{eq:no-boundary}
for $\phi \in C_0^\infty(\Omega; \R^N)$, and we conclude that \eqref{eq:boundary-measure} still holds true
for these test functions.

We now analyse $M_\infty$ in more detail. This will also reveal some information
about other possible minimisers of $E_\infty$. Some of the key arguments in this section are similar
to estimates due to Evans and Yu \cite{Evans-Yu:05}.

We prove the following statements.

\begin{theorem} \label{thm:limiting-measure-function-pair}
Suppose that $v \in u_0 + W_0^{1, \infty}(\Omega; \R^N)$ is a minimiser of
$E_\infty$.
\begin{enumerate}
\item \label{itm:minimisers-local}
Then $\locrep{Dv}{\mu_\infty} = F_\infty$,
and $|Dv|^\star(x) = |F_\infty(x)| = e_\infty$ for $\mu_\infty$-almost every $x \in \Omega$.
\item \label{itm:minimisers-global}
If $e_\infty' < e_\infty$, then $\rep{Dv}{\mu_\infty} = F_\infty$, and $|Dv|^\star(x) = |F_\infty(x)| = e_\infty$ for $\mu_\infty$-almost every $x \in \overline{\Omega}$.
\item \label{itm:strong-convergence-local}
The convergence $M_{p_\ell} \to M_\infty$ is strong in every compact subset of $\Omega$.
\item \label{itm:strong-convergence-global}
If $e_\infty' < e_\infty$, then the convergence $M_{p_\ell} \to M_\infty$ is strong in $\overline{\Omega}$.
\end{enumerate}
\end{theorem}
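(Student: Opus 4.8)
The plan is to extract everything from a single identity obtained by using a minimiser itself as a test function in the Euler--Lagrange equation for $u_p$. Fix any minimiser $v\in u_0+W_0^{1,\infty}(\Omega;\R^N)$ of $E_\infty$. Since $v-u_p\in W_0^{1,p}(\Omega;\R^N)$, equation \eqref{eq:Euler-Lagrange-p} gives $\int_\Omega|Du_p|^{p-2}Du_p:Dv\,dx=\int_\Omega|Du_p|^p\,dx$, i.e., after dividing by $e_p^{p-2}\Le^n(\Omega)$,
\[
\int_\Omega Du_p:Dv\,d\mu_p=e_p^2=\int_\Omega|Du_p|^2\,d\mu_p.
\]
As $\mu_p\ll\Le^n$ and $|Dv|\le e_\infty$ almost everywhere, expanding $|Du_p-Dv|^2$ and inserting this identity yields the basic estimate
\[
\int_\Omega|Du_p-Dv|^2\,d\mu_p=\int_\Omega|Dv|^2\,d\mu_p-e_p^2\le e_\infty^2-e_p^2,
\]
which tends to $0$ as $p\to\infty$ since $e_p\to e_\infty$. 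This is the counterpart of the Evans--Yu estimate and drives everything else.

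From here I would first read off the elementary consequences along $p=p_\ell$. The estimate gives $\int_\Omega|Dv|^2\,d\mu_{p_\ell}\to e_\infty^2$; being at most $e_\infty^2\mu_{p_\ell}(\overline\Omega)\le e_\infty^2$, this also forces $\mu_{p_\ell}(\overline\Omega)\to1$, hence $\mu_\infty(\overline\Omega)=1$. Since $|Dv|\le e_\infty$, elementary manipulations then give $|Dv|\to e_\infty$, and therefore (by the estimate) $|Du_{p_\ell}|\to e_\infty$, in $L^2(\mu_{p_\ell})$, so $|Du_{p_\ell}|^2\to e_\infty^2$ in $L^1(\mu_{p_\ell})$. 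Passing a continuous weight $\chi\ge0$ through $\int\chi|Du_{p_\ell}|^2\,d\mu_{p_\ell}\to e_\infty^2\int\chi\,d\mu_\infty$ and using the lower semicontinuity in Proposition~\ref{prp:weak-compactness} then yields $|F_\infty|\le e_\infty$ $\mu_\infty$-almost everywhere. Next, to show $|Dv|^\star=e_\infty$ $\mu_\infty$-a.e.\ in $\Omega$, I would note that for each $\delta>0$ the set $A_\delta=\{x\in\Omega:|Dv|^\star(x)<e_\infty-\delta\}$ is open and $|Dv|<e_\infty-\delta$ a.e.\ on it, so $e_\infty^2-|Dv|^2\ge c\delta$ there for some $c>0$; since $\int_\Omega(e_\infty^2-|Dv|^2)\,d\mu_{p_\ell}=e_\infty^2\mu_{p_\ell}(\overline\Omega)-\int_\Omega|Dv|^2\,d\mu_{p_\ell}\to0$ with nonnegative integrand, $\mu_{p_\ell}(A_\delta)\to0$, and openness plus weak convergence of $\mu_{p_\ell}$ give $\mu_\infty(A_\delta)=0$; letting $\delta\searrow0$ and recalling $|Dv|^\star\le e_\infty$ everywhere gives the claim.

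What remains is the heart of the matter: local strong convergence $M_{p_\ell}\to M_\infty$ (equivalently, the energy equality $\int_K|F_\infty|^2\,d\mu_\infty=e_\infty^2\mu_\infty(K)$ for $K\Subset\Omega$, i.e.\ no interior ``defect measure''), from which the remaining assertions of \ref{itm:minimisers-local} follow --- indeed $|F_\infty|\le e_\infty$ and the energy equality force $|F_\infty|=e_\infty$ $\mu_\infty$-a.e.\ in $\Omega$, and the third part of Proposition~\ref{prp:weak-compactness} then gives \ref{itm:strong-convergence-local}, while $\locrep{Dv}{\mu_\infty}=F_\infty$ comes from testing strong convergence against quadratic integrands built from $\eta_\epsilon*Dv$. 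For $K\Subset\Omega$, a cutoff $\chi\equiv1$ on $K$, and a mollifier $\eta_\epsilon$, weak convergence of the measure-function pairs and lower semicontinuity bound $\int\chi|F_\infty-\eta_\epsilon*Dv|^2\,d\mu_\infty$ by $\liminf_\ell\int\chi|Du_{p_\ell}-\eta_\epsilon*Dv|^2\,d\mu_{p_\ell}$, and the basic estimate disposes of the contribution of $Du_{p_\ell}-Dv$. The one genuine obstacle is the mollification commutator $\int_K|Dv-\eta_\epsilon*Dv|^2\,d\mu_{p_\ell}$: as the densities of $\mu_{p_\ell}$ are not uniformly integrable, this need not be small uniformly in $\ell$ for a fixed $\epsilon$. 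I would control it by a diagonal argument, using the explicit form of $\mu_{p_\ell}$ and H\"older's inequality to get $\int_K|Dv-\eta_\epsilon*Dv|^2\,d\mu_{p_\ell}\lesssim\bigl\|\,|Dv-\eta_\epsilon*Dv|\,\bigr\|_{L^{p_\ell}(K)}^2$, then coupling $\epsilon=\epsilon(\ell)\searrow0$ slowly enough --- interpolating the $L^1$-convergence $\eta_\epsilon*Dv\to Dv$ against the uniform $L^\infty$-bound --- that this tends to $0$; combined with the reverse-Fatou bound $\limsup_\epsilon\int_K|\eta_\epsilon*Dv|^2\,d\mu_\infty\le\int_K(|Dv|^\star)^2\,d\mu_\infty=e_\infty^2\mu_\infty(K)$ this pins down $\eta_\epsilon*Dv\to F_\infty$ in $L^2_{\loc}(\mu_\infty)$ and the energy equality. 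Finally, when $e_\infty'<e_\infty$, Proposition~\ref{prp:boundary-measures} provides uniform control of $\mu_{p_\ell}$ near $\partial\Omega$, from which I would deduce $\mu_\infty(\partial\Omega)=0$; the local statements then upgrade to the global ones \ref{itm:minimisers-global} and \ref{itm:strong-convergence-global}, with $\rep{\blank}{\mu_\infty}$ (tested against regular mollifiers, Definition~\ref{def:restriction-global}) replacing $\locrep{\blank}{\mu_\infty}$.
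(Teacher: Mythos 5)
Your starting point is genuinely different from the paper's: you test the $p$-Laplace equation directly with $v-u_p\in W_0^{1,p}$ and get the clean global identity $\int_\Omega Du_p:Dv\,d\mu_p=e_p^2$, hence $\int_\Omega|Du_p-Dv|^2\,d\mu_p=\int_\Omega|Dv|^2\,d\mu_p-e_p^2\le e_\infty^2-e_p^2\to0$. The paper instead tests with $\xi(v_\epsilon-u_p)$ (mollified $v$ and a cutoff $\xi$) and supplements this with Lemma~\ref{lem:mass-estimate}; your identity is tidier and encodes the same information. The elementary consequences you extract from it are all correct: $\mu_\infty(\overline\Omega)=1$, $\int\chi|Dv|^2\,d\mu_{p_\ell}\to e_\infty^2\int\chi\,d\mu_\infty$ for continuous $\chi$, $|F_\infty|\le e_\infty$ $\mu_\infty$-a.e., and $|Dv|^\star=e_\infty$ $\mu_\infty$-a.e.\ in $\Omega$ (your open-set argument via weak convergence of measures is fine and differs slightly from the paper's reverse-Fatou route).

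However, there is a genuine gap at exactly the point you flag as ``the heart of the matter''. You want to bound $\int_K|F_\infty-\eta_\epsilon*Dv|^2\,d\mu_\infty$, via Hutchinson lower semicontinuity, by $\liminf_\ell\int_K|Du_{p_\ell}-\eta_\epsilon*Dv|^2\,d\mu_{p_\ell}$, and then absorb the commutator term $\int_K|Dv-\eta_\epsilon*Dv|^2\,d\mu_{p_\ell}$. But the lower-semicontinuity inequality holds only with $\epsilon$ \emph{fixed} while $\ell\to\infty$; a diagonal coupling $\epsilon=\epsilon(\ell)$ is logically incompatible with it. And for fixed $\epsilon$, your H\"older/interpolation bound gives
\[
\int_K|Dv-\eta_\epsilon*Dv|^2\,d\mu_{p_\ell}\le \Le^n(\Omega)^{-2/p_\ell}\,(2e_\infty)^{2-2/p_\ell}\,\|Dv-\eta_\epsilon*Dv\|_{L^1(K)}^{2/p_\ell}\xrightarrow[\ell\to\infty]{}4e_\infty^2,
\]
so $\liminf_\ell$ of the commutator does not tend to zero as $\epsilon\searrow0$: the exponent $2/p_\ell\to0$ kills exactly the smallness you would get from $\|Dv-\eta_\epsilon*Dv\|_{L^1(K)}\to0$. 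Consequently neither $\locrep{Dv}{\mu_\infty}=F_\infty$ nor the local energy equality $\int_K|F_\infty|^2\,d\mu_\infty=e_\infty^2\mu_\infty(K)$ (and hence neither $|F_\infty|=e_\infty$ nor strong convergence) follows from your chain of inequalities. The paper circumvents this by never forming the commutator: the choice of test function $\xi(v_\epsilon-u_p)$ means the only terms surviving in the weak limit are $\int\xi|Dv_\epsilon|^2\,d\mu_{p_k}$ (with $Dv_\epsilon$ continuous, hence converging to $\int\xi|Dv_\epsilon|^2\,d\mu_\infty$) and a cross term supported on $\supp D\xi$, controlled via $|u_\infty-v_\epsilon|\lesssim r$ near $\partial\Omega$. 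Inequality \eqref{eq:v_epsilon} then holds for every fixed $\epsilon\le r$, and sending $\epsilon\searrow0$ afterwards is legitimate. To repair your proof you would need to replace the ``test with $v-u_p$, mollify afterwards'' step by ``mollify and cut off first, then test'', which is precisely the paper's computation; your basic estimate could then play the role of Lemma~\ref{lem:mass-estimate}, but as written the argument does not close.
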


Before we prove these results, we note that statement \ref{itm:strong-convergence-local} has the following consequence.
If we write $F_\infty = (F_{1 \infty}, \dotsc, F_{N \infty})$, then \eqref{eq:Pohozaev} gives rise to
the identity
\begin{equation} \label{eq:stationary}
\sum_{k = 1}^N \int_\Omega F_{k \infty} \cdot D_{F_{k \infty}}\psi \, d\mu_\infty = 0
\end{equation}
for all $\psi \in C_0^\infty(\Omega; \R^n)$, owing to Proposition \ref{prp:weak-compactness}.
This equation complements \eqref{eq:no-boundary}, and we will use
it in Section \ref{sct:structure} to say something about the structure of $(M_\infty, F_\infty)$, although we
will formulate this in terms of a corresponding $N$-tuple of $1$-currents.

In the framework of currents, equation \eqref{eq:stationary} corresponds to \eqref{eq:geodesic}.
It is a weak formulation of the equation for geodesics and is one of key properties of the
measure-function pair $M_\infty$.

For the proof of the theorem, we require the following lemma.

\begin{lemma} \label{lem:mass-estimate}
For any $\xi \in C^0(\overline{\Omega})$ with $\xi \ge 0$, and for any $\alpha \in (0, 1)$,
\[
\alpha^2 e_p^2 \int_\Omega \xi \, d\mu_p \le \int_\Omega \xi |D u_p|^2 \, d \mu_p + \alpha^p e_p^2 \|\xi\|_{C^0(\Omega)}.
\] 
\end{lemma}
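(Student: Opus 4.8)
The plan is to split the integral $\int_\Omega \xi |Du_p|^2 \, d\mu_p$ according to whether $|Du_p|$ is large or small compared with $\alpha e_p$, and to exploit the precise form of $\mu_p$, namely $\mu_p = e_p^{2-p} \Le^n(\Omega)^{-1} |Du_p|^{p-2} \Le^n$. Concretely, set $A = \set{x \in \Omega}{|Du_p(x)| \ge \alpha e_p}$ and $B = \Omega \setminus A$. On $A$ we have $|Du_p|^2 \ge \alpha^2 e_p^2$, so $\int_A \xi |Du_p|^2 \, d\mu_p \ge \alpha^2 e_p^2 \int_A \xi \, d\mu_p$. This already gives the desired lower bound over $A$; the point is to control the leftover contribution $\alpha^2 e_p^2 \int_B \xi \, d\mu_p$ coming from the region where the bound $|Du_p|^2 \ge \alpha^2 e_p^2$ fails.

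The key observation for the set $B$ is that there the density of $\mu_p$ with respect to Lebesgue measure is small. Indeed, on $B$ we have $|Du_p| < \alpha e_p$, hence $|Du_p|^{p-2} \le \alpha^{p-2} e_p^{p-2}$, so the density $e_p^{2-p} \Le^n(\Omega)^{-1} |Du_p|^{p-2} \le \alpha^{p-2} \Le^n(\Omega)^{-1}$ pointwise on $B$. Therefore
\[
\alpha^2 e_p^2 \int_B \xi \, d\mu_p \le \alpha^2 e_p^2 \|\xi\|_{C^0(\Omega)} \cdot \alpha^{p-2} \Le^n(\Omega)^{-1} \Le^n(B) \le \alpha^p e_p^2 \|\xi\|_{C^0(\Omega)},
\]
using $\Le^n(B) \le \Le^n(\Omega)$ and $\xi \ge 0$. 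Combining, one gets
\[
\alpha^2 e_p^2 \int_\Omega \xi \, d\mu_p = \alpha^2 e_p^2 \int_A \xi \, d\mu_p + \alpha^2 e_p^2 \int_B \xi \, d\mu_p \le \int_A \xi |Du_p|^2 \, d\mu_p + \alpha^p e_p^2 \|\xi\|_{C^0(\Omega)} \le \int_\Omega \xi |Du_p|^2 \, d\mu_p + \alpha^p e_p^2 \|\xi\|_{C^0(\Omega)},
\]
where the last inequality uses $\xi |Du_p|^2 \ge 0$ to discard the contribution of $B$. This is exactly the claimed estimate.

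There is essentially no serious obstacle here; the estimate is elementary once one writes out the density of $\mu_p$. The only point requiring a little care is that $\mu_p$ is a measure on $\overline\Omega$ while the integrals in the statement are over $\Omega$ — but $\mu_p$ is absolutely continuous with respect to $\Le^n$, so $\mu_p(\partial\Omega) = 0$ and the distinction is immaterial. One should also note that $\xi \in C^0(\overline\Omega)$ is used only through its supremum and its nonnegativity, so no continuity is actually needed beyond ensuring measurability and that $\|\xi\|_{C^0(\Omega)}$ is finite. The splitting into $A$ and $B$ is the whole idea; everything else is bookkeeping.
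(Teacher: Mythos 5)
Your proof is correct and follows essentially the same approach as the paper: you split $\Omega$ according to whether $|Du_p|$ is above or below $\alpha e_p$, and control the small-gradient region using the bound $\mu_p(\{|Du_p| < \alpha e_p\}) \le \alpha^{p-2}$ coming directly from the density of $\mu_p$. The paper defines the set with $\le$ rather than $<$ and packages the computation slightly differently, but the argument is identical.
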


\begin{proof}
Given $\alpha \in (0, 1)$, define $A_p = \set{x \in \Omega}{|D u_p| \le \alpha e_p}$.
We first note that
\[
\mu_p(A_p) = \frac{e_p^{2 - p}}{\mathcal{L}^n(\Omega)} \int_{A_p} |D u_p|^{p - 2} \, dx \le \alpha^{p - 2} \frac{\mathcal{L}^n(A_p)}{\mathcal{L}^n(\Omega)} \le \alpha^{p - 2}.
\]
Hence
\[
\begin{split}
\int_\Omega \xi |D u_p|^2 \, d\mu_p & \ge \int_{\Omega \setminus A_p} \xi |D u_p|^2 \, d\mu_p \\
& \ge \alpha^2 e_p^2 \int_{\Omega \setminus A_p} \xi \, d\mu_p \\
& = \alpha^2 e_p^2 \left(\int_\Omega \xi \, d\mu_p - \int_{A_p} \xi \, d\mu_p\right) \\
& \ge \alpha^2 e_p^2 \left(\int_\Omega \xi \, d\mu_p - \|\xi\|_{C^0(\Omega)} \mu_p(A_p)\right) \\
& \ge \alpha^2 e_p^2 \int_\Omega \xi \, d\mu_p - \alpha^p e_p^2 \|\xi\|_{C^0(\Omega)}.
\end{split}
\]
This implies the desired inequality.
\end{proof}

\begin{proof}[Proof of Theorem \ref{thm:limiting-measure-function-pair}.]
We first prove the local statements \ref{itm:minimisers-local} and \ref{itm:strong-convergence-local},
allowing for the possibility that $e_\infty' = e_\infty$.
Let $\eta \in C_0^\infty(B_1(0))$ with $\int_{\R^n} \eta \, dx = 1$. Set $\eta_\epsilon(x) = \epsilon^{-n} \eta(x/\epsilon)$ and define $v_\epsilon = v * \eta_\epsilon$ (where for convenience $v$ is extended
arbitrarily outside of $\Omega$, so that $v_\epsilon$ is
well-defined in $\Omega$). Choose $\xi \in C_0^\infty(\Omega)$ with $0 \le \xi \le 1$. Also choose
$\alpha \in (0, 1)$. Then
\begin{multline*}
\int_\Omega \xi |D v_\epsilon - D u_p|^2 \, d\mu_p \\
\begin{aligned}
& = \int_\Omega \xi \bigl(|D v_\epsilon|^2 - |D u_p|^2\bigr) \, d\mu_p + 2 \int_\Omega \xi (D u_p - D v_\epsilon) : D u_p \, d\mu_p \\
& = \int_\Omega \xi \bigl(|D v_\epsilon|^2 - |D u_p|^2\bigr) \, d\mu_p - 2 \int_\Omega (u_p - v_\epsilon) \otimes D \xi : D u_p \, d\mu_p \\
& \le \int_\Omega \xi |D v_\epsilon|^2 \, d\mu_p - \alpha^2 e_p^2 \int_\Omega \xi \, d\mu_p + \alpha^p e_p^2 - 2 \int_\Omega (u_p - v_\epsilon) \otimes D \xi : D u_p \, d\mu_p.
\end{aligned}
\end{multline*}
Here we have used Lemma \ref{lem:mass-estimate} in the last step. Hence
\begin{multline*}
\limsup_{k \to \infty} \int_\Omega \xi |D v_\epsilon - D u_{p_k}|^2 \, d\mu_{p_k} \le \int_\Omega \xi |D v_\epsilon|^2 \, d\mu_\infty - \alpha^2 e_\infty^2 \int_\Omega \xi \, d\mu_\infty \\
- 2 \int_\Omega (u_\infty - v_\epsilon) \otimes D \xi : F_\infty \, d\mu_\infty.
\end{multline*}

Let $r > 0$. Choose $\xi$ such that $\xi \equiv 1$ in $\Omega \setminus \Omega_r$ and $\xi \equiv 0$ in $\Omega_{r/2}$,
and such that $|D \xi| \le 4/r$ in all of $\Omega$. Note that
\[
\|u_\infty - v\|_{C^0(\overline{\Omega_r})} \le 2\sqrt{N} r e_\infty,
\]
because $v$ is a minimiser of $E_\infty$ and thus $\|D v\|_{L^\infty(\Omega)} = \|D u_\infty\|_{L^\infty(\Omega)} = e_\infty$.
If $\epsilon < r$, then there exists a constant $C$, depending only on $N$, $n$, and $\eta$, such that
\[
\|u_\infty - v_\epsilon\|_{C^0(\overline{\Omega_r})} \le Cr e_\infty.
\]
It follows that
\[
-2\int_\Omega (u_\infty - v_\epsilon) \otimes D \xi : F_\infty \, d\mu_\infty \le 8C e_\infty \left(\mu_\infty(\Omega_r) \int_\Omega |F_\infty|^2 \, d\mu_\infty\right)^{1/2}.
\]

Given a compact set $K \subset \Omega$ and given $\gamma > 0$, we may choose $r > 0$ such that
$K \cap \Omega_r = \emptyset$ and 
\[
8C e_\infty \left(\mu_\infty(\Omega_r) \int_\Omega |F_\infty|^2 \, d\mu_\infty\right)^{1/2} \le \gamma.
\]
For the above choice of $\xi$, we then conclude that
\begin{equation} \label{eq:v_epsilon}
\limsup_{k \to \infty} \int_\Omega \xi |D v_\epsilon - D u_{p_k}|^2 \, d\mu_{p_k} \le \int_\Omega \xi |D v_\epsilon|^2 \, d\mu_\infty - \alpha^2 e_\infty^2 \int_\Omega \xi \, d\mu_\infty + \gamma
\end{equation}
for all $\epsilon \le r$.
Clearly $|D v_\epsilon| \le \|D v\|_{L^\infty(\Omega)} \le e_\infty$ everywhere in $\Omega$. Therefore,
\begin{equation} \label{eq:v_epsilon1}
\limsup_{k \to \infty} \int_K |D v_\epsilon - D u_{p_k}|^2 \, d\mu_{p_k} \le (1 - \alpha^2) e_\infty^2 \int_\Omega \xi \, d\mu_\infty + \gamma \le (1 - \alpha^2) e_\infty^2 + \gamma.
\end{equation}
By Proposition \ref{prp:weak-compactness},
\[
\int_K |D v_\epsilon - F_\infty|^2 \, d\mu_\infty \le (1 - \alpha^2) e_\infty^2 + \gamma
\]
as well. Since $\alpha \in (0, 1)$ and $\gamma > 0$ may be chosen arbitrarily here (and then the inequality holds for
$\epsilon$ small enough, depending on $\gamma$), it follows that
\[
\lim_{\epsilon \searrow 0} \int_K |D v_\epsilon - F_\infty|^2 \, d\mu_\infty = 0.
\]
According to Definition \ref{def:restriction-local}, this means that $F_\infty = \locrep{Dv}{\mu_\infty}$.

Because $|D v_\epsilon| \le e_\infty$ everywhere, this locally strong $L^2$-convergence with respect
to $\mu_\infty$ also implies that $|F_\infty| \le e_\infty$ at $\mu_\infty$-almost every point.
Now we note that \eqref{eq:v_epsilon} further gives rise to the inequality
\begin{equation} \label{eq:v_epsilon2}
\int_\Omega \xi |F_\infty|^2 \, d\mu_\infty = \lim_{\epsilon \searrow 0} \int_\Omega \xi |D v_\epsilon|^2 \, d\mu_\infty \ge e_\infty^2 \int_\Omega \xi \, d\mu_\infty.
\end{equation}
Hence $|F_\infty| = e_\infty$ at $\mu_\infty$-almost every point.

Inequality \eqref{eq:v_epsilon2}, together with the dominated convergence theorem, also has the
consequence that
\begin{equation} \label{eq:|Dv|*}
\begin{split}
\int_\Omega \xi (|D v|^\star)^2 \, d\mu_\infty & = \int_\Omega \xi \lim_{\epsilon \searrow 0} \esssup_{B_\epsilon(x)} |Dv|^2 \, d\mu_\infty(x) \\
& = \lim_{\epsilon \searrow 0} \int_\Omega \xi \esssup_{B_\epsilon(x)} |Dv|^2 \, d\mu_\infty(x) \\
& \ge \lim_{\epsilon \searrow 0} \int_\Omega \xi |Dv_\epsilon|^2 \, d\mu_\infty \\
& \ge e_\infty^2 \int_\Omega \xi \, d\mu_\infty.
\end{split}
\end{equation}
Since we clearly have the pointwise bound $|Dv|^\star \le e_\infty$, we conclude
that $|D v|^\star = e_\infty$ at $\mu_\infty$-almost every point.
All the claims of statement \ref{itm:minimisers-local} are now proved.

The claim of statement \ref{itm:strong-convergence-local} relies on the same inequalities.
We have shown, as a consequence of \eqref{eq:v_epsilon1},
that for any compact set $K \subset \Omega$, if $\delta > 0$ is given, then
there exists $\epsilon_0 > 0$ such that for any $\epsilon \in (0, \epsilon_0]$,
\begin{equation} \label{eq:key-estimate}
\limsup_{k \to \infty} \int_K |D v_\epsilon - D u_{p_k}|^2 \, d\mu_{p_k} \le \delta.
\end{equation}
But clearly, for a fixed $\epsilon$,
\begin{multline*}
\limsup_{k \to \infty} \int_K |D v_\epsilon - D u_{p_k}|^2 \, d\mu_{p_k} \\
\begin{aligned}
& = \limsup_{k \to \infty} \int_K \bigl(|D v_\epsilon|^2 - 2D v_\epsilon : D u_{p_k} + |D u_{p_k}|^2\bigr) \, d\mu_{p_k} \\
& = \int_K \bigl(|D v_\epsilon|^2 - 2D v_\epsilon : F_\infty\bigr) \, d\mu_\infty + \limsup_{k \to \infty} \int_K |D u_{p_k}|^2 \, d\mu_{p_k} \\
& = \int_K |D v_\epsilon - F_\infty|^2 \, d\mu_\infty - \int_K |F_\infty|^2 \, d\mu_\infty + \limsup_{k \to \infty} \int_K |D u_{p_k}|^2 \, d\mu_{p_k}.
\end{aligned}
\end{multline*}
Therefore, the above inequality \eqref{eq:key-estimate} implies that
\begin{equation} \label{eq:strong-convergence}
\limsup_{k \to \infty} \int_K |D u_{p_k}|^2 \, d\mu_{p_k} \le \int_K |F_\infty|^2 \, d\mu_\infty.
\end{equation}
By Proposition \ref{prp:weak-compactness}, this means that
$M_{p_k} \to M_\infty$ strongly in $K$.

Finally, we assume that $e_\infty' < e_\infty$, and we prove the global statements
\ref{itm:minimisers-global} and \ref{itm:strong-convergence-global} with global variants of the above
arguments. To this end, consider a
regular mollifier $\mathcal{M}_\epsilon$. We now define $v_\epsilon = \mathcal{M}_\epsilon v$.
We choose $\alpha \in (0, 1)$ again. Just as before, we compute
\begin{multline*}
\int_\Omega |D v_\epsilon - D u_p|^2 \, d\mu_p \\
\begin{aligned}
& = \int_\Omega \bigl(|D v_\epsilon|^2 - |D u_p|^2\bigr) \, d\mu_p + 2\int_{\partial \Omega} (u_p - v_\epsilon) \cdot f_p \, dm_p \\
& \le \int_\Omega |D v_\epsilon|^2 \, d\mu_p - \alpha^2 e_p^2 \mu_p(\Omega) + \alpha^p e_p^2 + 2\int_{\partial \Omega} (u_p - v_\epsilon) \cdot f_p \, dm_p.
\end{aligned}
\end{multline*}
Hence
\[
\begin{split}
\limsup_{k \to \infty} \int_\Omega |D v_\epsilon - D u_{p_k}|^2 \, d\mu_{p_k} & \le \int_{\overline{\Omega}} |D v_\epsilon|^2 \, d\mu_\infty - \alpha^2 e_\infty^2 \mu_\infty(\overline{\Omega}) \\
& \quad + 2 \int_{\partial \Omega} (u_\infty - v_\epsilon) \cdot f_\infty \, dm_\infty.
\end{split}
\]
Fix $\gamma > 0$. We know that $v_\epsilon \to v$ uniformly in $\overline{\Omega}$, and $v = u_\infty$ on $\partial \Omega$.
Hence
\[
2 \int_{\partial \Omega} (u_\infty - v_\epsilon) \cdot f_\infty \, dm_\infty \le \gamma
\]
whenever $\epsilon$ is sufficiently small. Therefore,
\begin{equation} \label{eq:v_epsilon-second}
\limsup_{k \to \infty} \int_\Omega |D v_\epsilon - D u_{p_k}|^2 \, d\mu_{p_k} \le \int_{\overline{\Omega}} |D v_\epsilon|^2 \, d\mu_\infty - \alpha^2 e_\infty^2 \mu_\infty(\overline{\Omega}) + \gamma.
\end{equation}

By the properties of regular mollifiers, we know that
\begin{equation} \label{eq:C^0-bound-v_epsilon}
\limsup_{\epsilon \searrow 0} \|Dv_\epsilon\|_{C^0(\overline{\Omega})} \le \|D v\|_{L^\infty(\Omega)} \le e_\infty.
\end{equation}
Hence
\[
\limsup_{k \to \infty} \int_\Omega |D v_\epsilon - D u_{p_k}|^2 \, d\mu_{p_k} \le (1 - \alpha^2) e_\infty^2 \mu_\infty(\overline{\Omega}) + 2\gamma
\]
when $\epsilon$ is small enough. Proposition \ref{prp:weak-compactness} then implies that
\[
\lim_{\epsilon \searrow 0} \int_{\overline{\Omega}} |D v_\epsilon - F_\infty|^2 \, d\mu_\infty = 0.
\]
It follows that $F_\infty = \rep{Dv}{\mu_\infty}$.

Using \eqref{eq:C^0-bound-v_epsilon} again, we conclude that the inequality
$|F_\infty| \le e_\infty$ holds $\mu_\infty$-almost everywhere in $\overline{\Omega}$.
By \eqref{eq:v_epsilon-second},
\[
\int_{\overline{\Omega}} |F_\infty|^2 \, d\mu_\infty = \lim_{\epsilon \searrow 0} \int_{\overline{\Omega}} |D v_\epsilon|^2 \, d\mu_\infty \ge e_\infty^2 \mu_\infty(\overline{\Omega}).
\]
Hence $|F_\infty| = e_\infty$ at $\mu_\infty$-almost every point of $\overline{\Omega}$.

As in \eqref{eq:|Dv|*}, we see that
\[
\begin{split}
\int_{\overline{\Omega}} (|D v|^\star)^2 \, d\mu_\infty & = \lim_{\epsilon \searrow 0} \int_{\overline{\Omega}} \esssup_{B_\epsilon(x) \cap \Omega} |Dv|^2 \, d\mu_\infty(x) \\
& \ge\lim_{\epsilon \searrow 0} \int_{\overline{\Omega}} |Dv_\epsilon|^2 \, d\mu_\infty \\
& = \lim_{\epsilon \searrow 0} \int_{\overline{\Omega}} |F_\infty|^2 \, d\mu_\infty \\
& = e_\infty^2 \mu_\infty(\overline{\Omega})
\end{split}
\]
by the properties of $\mathcal{M}_\epsilon$.
Hence $|D v|^\star = e_\infty$ at $\mu_\infty$-almost every point of $\overline{\Omega}$.
This completes the proof of \ref{itm:minimisers-global}.

For the proof of \ref{itm:strong-convergence-global}, we observe that
the derivation of \eqref{eq:strong-convergence} now works for $K = \overline{\Omega}$ as well.
Thus it suffices to use Proposition \ref{prp:weak-compactness} again.
\end{proof}

\section{Combining the tools} \label{sct:proof}

In this section, we prove Theorem \ref{thm:main}. If $e_\infty = 0$, then we can
choose any unit vector $\vec{T}_0 \in \R^{N \times n}$ and set $T = [\Le^n \restr \overline{\Omega}, \vec{T}_0]$.
The statements of the theorem will then be satisfied trivially. We therefore assume that
$e_\infty > 0$ henceforth.

The current $T$ in the theorem
is then just another representation of the measure-function pair $M_\infty = (\mu_\infty, F_\infty)$ constructed above.
Namely, we set $T = [\mu_\infty, F_\infty]$. Since we know that $|F_\infty(x)| = e_\infty$ for $\mu_\infty$-almost
every $x \in \Omega$ by Theorem \ref{thm:limiting-measure-function-pair}, this means that
$\|T\| \restr \Omega = e_\infty \mu_\infty \restr \Omega$ and $\vec{T} = e_\infty^{-1} F_\infty$ in $\Omega$.
If $e_\infty' < e_\infty$, then $\|T\| = e_\infty \mu_\infty$ and $\vec{T} = e_\infty^{-1} F_\infty$ on $\overline{\Omega}$
by the same theorem.

Theorem \ref{thm:main} now follows from the
results in the previous sections. We will give the details below.
First, however, we formulate and prove a local version of the mass minimising property in statement
\ref{itm:non-trivial}.\ref{itm:geodesic} in Theorem \ref{thm:main}. This result also holds true if $e_\infty' = e_\infty$.

\begin{theorem} \label{thm:local-geodesic}
Suppose that $S$ is an $N$-tuple of $1$-currents with locally finite mass and with
$\supp \partial S \cap \Omega = \emptyset$. If $K \subset \Omega$ is a compact set such that $S(\omega) = T(\omega)$ for all $\omega \in (\D^1(\Omega))^N$ with
$\omega = 0$ in $K$, then $\|T\|(K) \le \|S\|(K)$.
\end{theorem}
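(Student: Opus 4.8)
The plan is to reduce the claim to the integration-by-parts tools of Section \ref{sct:currents}. Recall from the start of the present section that, as $e_\infty > 0$, we have $T = [\mu_\infty, F_\infty]$, so that $\|T\| \restr \Omega = e_\infty \mu_\infty \restr \Omega$; moreover Theorem \ref{thm:limiting-measure-function-pair}\ref{itm:minimisers-local} gives $\locrep{D u_\infty}{\mu_\infty} = F_\infty$ with $|F_\infty| = e_\infty$ $\mu_\infty$-almost everywhere in $\Omega$, and \eqref{eq:no-boundary} gives $\supp \partial T \cap \Omega = \emptyset$ together with $\int_\Omega F_\infty : D\phi \, d\mu_\infty = 0$ for all $\phi \in C_0^\infty(\Omega; \R^N)$. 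In view of Corollary \ref{cor:minimising-current-local}\ref{itm:locally-length-minimising}, it suffices to show that the minimiser $u_\infty$ satisfies
\[
-T(u_\infty \, d\chi) \ge e_\infty \int_\Omega \chi \, d\|T\|
\]
for every non-negative $\chi \in C_0^\infty(\Omega)$; the conclusion $\|T\|(K) \le \|S\|(K)$ then follows by imitating the proof of that corollary.

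To establish this inequality, I would fix such a $\chi$, extend $u_\infty$ to a globally Lipschitz map on $\R^n$, and set $u_\epsilon = \eta_\epsilon * u_\infty$ for a standard mollifier $\eta_\epsilon$. Since $\chi u_\epsilon \in C_0^\infty(\Omega; \R^N)$, testing \eqref{eq:no-boundary} with $\phi = \chi u_\epsilon$ and expanding $D(\chi u_\epsilon) = u_\epsilon \otimes D\chi + \chi \, D u_\epsilon$ yields
\[
-T(u_\epsilon \, d\chi) = \int_\Omega \chi \, F_\infty : D u_\epsilon \, d\mu_\infty .
\]
Now let $\epsilon \searrow 0$. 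The left-hand side converges to $-T(u_\infty \, d\chi)$, since $u_\epsilon \to u_\infty$ uniformly on $\supp \chi$ and $F_\infty \in L^1(\mu_\infty)$ (because $\mu_\infty$ is finite). By Definition \ref{def:restriction-local} we have $D u_\epsilon \to F_\infty$ in $L^2(\mu_\infty \restr \supp \chi)$, so the right-hand side converges to $\int_\Omega \chi \, |F_\infty|^2 \, d\mu_\infty = e_\infty^2 \int_\Omega \chi \, d\mu_\infty = e_\infty \int_\Omega \chi \, d\|T\|$, using $|F_\infty| = e_\infty$ $\mu_\infty$-a.e.\ on $\supp \chi$. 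Hence in fact $-T(u_\infty \, d\chi) = e_\infty \int_\Omega \chi \, d\|T\|$, which is more than enough.

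For the concluding step, I would run the argument of Corollary \ref{cor:minimising-current-local}\ref{itm:locally-length-minimising} in the present setting. Pick cut-offs $\chi_m \in C_0^\infty(\Omega)$ with $0 \le \chi_m \le 1$ and $\chi_m \equiv 1$ on a compact neighbourhood of $K$ decreasing to $K$, so that $\int_\Omega \chi_m \, d\|T\| \to \|T\|(K)$ and $\int_\Omega \chi_m \, d\|S\| \to \|S\|(K)$. Since $d\chi_m$ vanishes on a neighbourhood of $K$, so does each smooth form $u_\epsilon \, d\chi_m \in (\D^1(\Omega))^N$, whence $S(u_\epsilon \, d\chi_m) = T(u_\epsilon \, d\chi_m)$ by the hypothesis on $S$; passing to the limit $\epsilon \searrow 0$ — admissible because $S$ and $T$ have locally finite mass and hence extend to continuous forms — gives $S(u_\infty \, d\chi_m) = T(u_\infty \, d\chi_m)$. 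Combining the identity from the previous paragraph, this equality, and Corollary \ref{cor:optimal-current-local} applied to $S$ (legitimate since $\supp \partial S \cap \Omega = \emptyset$ and $E_\infty(u_\infty) = e_\infty$), we obtain
\[
\int_\Omega \chi_m \, d\|T\| = -\frac{1}{e_\infty} T(u_\infty \, d\chi_m) = -\frac{1}{e_\infty} S(u_\infty \, d\chi_m) \le \int_\Omega \chi_m \, d\|S\| ,
\]
and letting $m \to \infty$ finishes the proof.

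I expect the main obstacle to be the passage to the limit in the integration-by-parts identity: because $\mu_\infty$ is typically singular with respect to the Lebesgue measure while $D u_\infty$ is only defined up to Lebesgue-null sets, $T$ cannot be paired with $u_\infty$ directly, and everything must be routed through the mollifications $u_\epsilon$, the decisive input being $\locrep{D u_\infty}{\mu_\infty} = F_\infty$. This is precisely why the statement is restricted to compact $K \subset \Omega$, where the \emph{local} $L^2$-representative is available and no control of $u_p$ near $\partial \Omega$ is needed — in contrast with the global mass-minimising property \ref{itm:non-trivial}.\ref{itm:geodesic}, which additionally relies on the boundary estimates of Proposition \ref{prp:boundary-measures}.
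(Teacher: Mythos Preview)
Your proposal is correct and follows essentially the same route as the paper: the paper simply cites Corollary \ref{cor:optimal-current-local} (for the identity $-T(u_\infty\,d\chi)=e_\infty\int_\Omega\chi\,d\|T\|$, via $\locrep{Du_\infty}{\|T\|}=e_\infty\vec{T}$) and then Corollary \ref{cor:minimising-current-local}\ref{itm:locally-length-minimising}, whereas you unpack both corollaries inline. Your direct mollification argument for the identity (testing \eqref{eq:no-boundary} with $\chi u_\epsilon$) is a legitimate local variant of the proof of Corollary \ref{cor:optimal-current-local}, and your cut-off limit is exactly the argument behind Corollary \ref{cor:minimising-current-local}\ref{itm:locally-length-minimising}; just make sure your $\chi_m$ also have supports shrinking to $K$ so that $\int_\Omega\chi_m\,d\|S\|\to\|S\|(K)$.
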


\begin{proof}
We first note that $\locrep{Du_\infty}{\|T\|} = e_\infty \vec{T}$ by Theorem \ref{thm:limiting-measure-function-pair}.
According to Corollary \ref{cor:optimal-current-local}, this implies that
\[
-T(u_\infty d\chi) = e_\infty \int_\Omega \chi \, d\|T\|
\]
for any $\chi \in C_0^\infty(\Omega)$ with $\chi \ge 0$. Hence the claim follows from Corollary \ref{cor:minimising-current-local}.
\end{proof}

For the proof of Theorem \ref{thm:main}, we also require some additional tools, including the following estimate.

\begin{lemma} \label{lem:decay-near-boundary}
Suppose that $e_\infty' < e_\infty$. Then there exist $R > 0$ and $\beta \in (0, 1)$ such that $\|T\|(\Omega_{r/2} \cup \partial \Omega) \le \beta \|T\|(\Omega_r \cup \partial \Omega)$ for all $r \in (0, R]$.
\end{lemma}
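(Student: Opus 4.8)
The plan is to derive a differential-type inequality for the function $r \mapsto \|T\|(\Omega_r \cup \partial\Omega)$ by testing the key equations \eqref{eq:no-boundary} and \eqref{eq:stationary} (or their finite-$p$ precursors \eqref{eq:Euler-Lagrange} and \eqref{eq:Pohozaev}) with vector fields adapted to the boundary collar. Since $e_\infty' < e_\infty$, Theorem \ref{thm:limiting-measure-function-pair} tells us that $\|T\| = e_\infty \mu_\infty$ on $\overline\Omega$, with $\vec T = e_\infty^{-1} F_\infty$ and $|F_\infty| = e_\infty$ $\mu_\infty$-a.e., and also that $\rep{Du_\infty}{\mu_\infty} = F_\infty$. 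The idea is that near $\partial\Omega$ the boundary data $u_0$ has tangential derivative bounded by $e_\infty' < e_\infty$, so the measure $\mu_\infty$ cannot concentrate too heavily right at the boundary; more precisely, the mass in a thin collar $\Omega_r$ controls the mass in the half-collar $\Omega_{r/2}$ up to a fixed contraction factor $\beta$.

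Concretely, I would pick $R > 0$ small enough that the signed distance function $d(x) = \dist(x, \partial\Omega)$ is smooth on $\Omega_R$ and its gradient $\nu(x) = -Dd(x)$ extends the outer normal. For $r \in (0, R]$ choose a cutoff $\chi_r \in C_0^\infty(\Omega)$ with $\chi_r \equiv 1$ on $\Omega_{r/2}$, $\supp \chi_r \subset \Omega_r$, and $|D\chi_r| \le 4/r$, and a radial profile that lets me test \eqref{eq:stationary} with $\psi = \chi_r \, (d - r)\, \nu$ or a similar field pushing mass toward the boundary. Using the structure $F_\infty = \rep{Du_\infty}{\|T\|}/e_\infty \cdot e_\infty$ and the Poincaré-type bound $|u_\infty - u_0| \lesssim r e_\infty$ in $\Omega_r$ (together with the tangential bound $|D'u_0| \le e_\infty' < e_\infty$ on $\partial\Omega$), the boundary term produced by integrating by parts is controlled by $e_\infty' \|T\|(\Omega_r \cup \partial\Omega)$ rather than $e_\infty \|T\|(\Omega_r \cup \partial\Omega)$, which is where the gain comes from. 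Balancing this against the interior term $\|T\|(\Omega_{r/2} \cup \partial\Omega)$ yields $\|T\|(\Omega_{r/2}\cup\partial\Omega) \le \beta \|T\|(\Omega_r\cup\partial\Omega)$ with $\beta$ depending only on the ratio $e_\infty'/e_\infty$, $n$, and the geometry of $\partial\Omega$.

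The main obstacle I anticipate is making the boundary contribution genuinely appear with the factor $e_\infty'$ instead of $e_\infty$: the equations \eqref{eq:no-boundary} and \eqref{eq:stationary} are interior identities, so the boundary information must enter through the measure-function pairs $(m_\infty, f_\infty)$ of Proposition \ref{prp:boundary-measures} and the inequality \eqref{eq:boundary-measures}, or through the comparison $|u_\infty - u_0|^\star \le e_\infty' \,\dist(\blank,\partial\Omega)$ near the boundary that follows from $u_0$ being the boundary trace. I would therefore likely work at the level of the approximating pairs $(\mu_p, Du_p)$ and $(m_p, f_p)$, prove the collar inequality there with constants uniform in $p$ using \eqref{eq:Pohozaev} and \eqref{eq:boundary}, and then pass to the limit $p = p_\ell \to \infty$, invoking the strong convergence $M_{p_\ell} \to M_\infty$ on $\overline\Omega$ from Theorem \ref{thm:limiting-measure-function-pair}\ref{itm:strong-convergence-global} to transfer the estimate to $\|T\|$. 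The bookkeeping of the various error terms (the $\alpha^p e_p^2$ remainder from Lemma \ref{lem:mass-estimate}, the $|D\chi_r| \le 4/r$ factor times the $O(r)$ smallness of $u_p - u_0$) is routine but must be arranged so that the surviving inequality has a strictly contractive constant independent of $r$.
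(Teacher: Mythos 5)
Your proposal gestures at the right circle of ideas—work at finite $p$, exploit $e_\infty' < e_\infty$ to gain a fixed fraction in a boundary collar, pass to the limit—but the mechanism you sketch is not the one that makes the argument go through, and as written there is a real gap. Testing the stationarity identity \eqref{eq:stationary} (or \eqref{eq:Pohozaev}) with a field like $\psi = \chi_r(d - r)\nu$ gives a boundary-localised monotonicity-type relation, but that identity contains no reference to $u_0$ or to $e_\infty'$, so nothing in it can produce a constant strictly below $1$; you correctly flag this as the ``main obstacle'' but do not resolve it. The paper instead uses the Euler--Lagrange equation \eqref{eq:Euler-Lagrange-p} tested against $\xi(u_p - u_0)$ (a Caccioppoli-type argument), \emph{after first replacing $u_0$ on the collar $\Omega_R$ by $u_0 \circ \varpi$}, where $\varpi$ is the nearest-point projection. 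This replacement is the crucial step you are missing: it gives a competitor boundary extension with $\esssup_{\Omega_R}|Du_0| = c < e_\infty$, and then Young's inequality lets you absorb the term $\int \xi |Du_p|^{p-2} Du_p : Du_0\,dx$ into the left-hand side with a factor bounded away from $1$. Your ``comparison $|u_\infty - u_0|^\star \le e_\infty'\dist(\cdot,\partial\Omega)$'' is not the right substitute — the $O(r)$ smallness of $u_p - u_0$ in $\Omega_r$ that is actually needed follows simply from $u_p - u_0 \in W_0^{1,p}$, and it only controls the cutoff-commutator term, not the main term where the gain lives. You also do not mention the final elementary but essential step: the estimate one first obtains is $\|T\|(\Omega_{r/2}\cup\partial\Omega) \le C_2\,\|T\|(\Omega_r \setminus \Omega_{r/2})$, and one must add $C_2\|T\|(\Omega_{r/2}\cup\partial\Omega)$ to both sides to obtain the stated contraction with $\beta = C_2/(C_2+1)$. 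Finally, the boundary pair $(m_p, f_p)$ plays no role here because the cutoff $\xi$ vanishes near $\partial\Omega$, so no boundary term appears; invoking it is a red herring.
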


\begin{proof}
Choose $R > 0$ such that there exists a smooth nearest point projection
$\varpi \colon \Omega_{2R} \to \partial \Omega$.
We may replace $u_0$ (while still using the same notation) by a function such that
$u_0(x) = u_0(\varpi(x))$ for $x \in \Omega_R$. Then
\[
c \coloneqq \esssup_{\Omega_R} |D u_0| < e_\infty,
\]
provided that $R$ is sufficiently small.

Now fix $r \in (0, R]$. Choose $\xi \in C_0^\infty(\R^n)$ with $0 \le \xi \le 1$ and such that
$\xi \equiv 1$ in $\Omega_{r/2}$ and $\xi \equiv 0$ in a neighbourhood of $\Omega \setminus \Omega_r$, and such that $|D\xi| \le 4/r$. Then
\begin{multline*}
\int_\Omega \xi |D u_p|^p \, dx \\
\begin{aligned}
& = \int_\Omega \xi |D u_p|^{p - 2} D u_p : D u_0 \, dx + \int_\Omega \xi |D u_p|^{p - 2} D u_p : (D u_p - D u_0) \, dx \\
& = \int_\Omega \xi |D u_p|^{p - 2} D u_p : D u_0 \, dx - \int_\Omega |D u_p|^{p - 2} (u_p - u_0) \otimes D \xi : D u_p \, dx.
\end{aligned}
\end{multline*}
Moreover, given $\alpha \in (0, 1)$, we can estimate
\[
\int_\Omega \xi |D u_p|^{p - 2} D u_p : D u_0 \, dx \le \frac{p - 1}{p} \alpha^{\frac{p}{p - 1}} \int_\Omega \xi |D u_p|^p \, dx + \frac{1}{p \alpha^p} \int_\Omega \xi |D u_0|^p \, dx
\]
by Young's inequality. Hence
\begin{multline*}
\left(1 - \frac{p - 1}{p} \alpha^{\frac{p}{p - 1}}\right) \int_\Omega \xi |D u_p|^p \, dx \\
\begin{aligned}
& \le \frac{1}{p \alpha^p} \int_\Omega \xi |D u_0|^p \, dx - \int_\Omega |D u_p|^{p - 2} (u_p - u_0) \otimes D \xi : D u_p \, dx \\
& \le \frac{c^p}{p \alpha^p} \int_\Omega \xi \, dx - \int_\Omega |D u_p|^{p - 2} (u_p - u_0) \otimes D \xi : D u_p \, dx.
\end{aligned}
\end{multline*}
In terms of the measures $\mu_p$, this means that
\begin{multline*}
\left(1 - \frac{p - 1}{p} \alpha^{\frac{p}{p - 1}}\right) \int_{\Omega} \xi |Du_p|^2 \, d\mu_p \\
\le \frac{c^p e_p^{2 - p}}{p \alpha^p} \fint_\Omega \xi \, dx - \int_\Omega (u_p - u_0) \otimes D \xi : D u_p \, d\mu_p.
\end{multline*}
Choose $\alpha > \frac{c}{e_\infty}$.
Restricting to $p_k$ and letting $k \to \infty$, we then find that
\[
(1 - \alpha) \int_{\overline{\Omega}} \xi |F_\infty|^2\, d\mu_\infty \le \int_\Omega (u_\infty - u_0) \otimes D \xi : F_\infty \, d\mu_\infty
\]
by Proposition \ref{prp:weak-compactness}.
Since $u_\infty - u_0 \in W_0^{1, \infty}(\Omega; \R^N)$,
there exists a constant $C_1$ such that
$|u_\infty - u_0| \le C_1 r$ in $\Omega_r$. Hence
\[
\int_{\Omega_{r/2} \cup \partial \Omega} |F_\infty|^2\, d\mu_\infty \le C_2 e_\infty \int_{\Omega_r \setminus \Omega_{r/2}} |F_\infty| \, d\mu_\infty,
\]
where $C_2 = 4C_1/((1 - \alpha) e_\infty)$. By Theorem \ref{thm:limiting-measure-function-pair} and by the
definition of $T$, this means that
\[
\|T\|(\Omega_{r/2} \cup \partial \Omega) \le C_2 \|T\|(\Omega_r \setminus \Omega_{r/2}).
\]
Adding $C_2 \|T\|(\Omega_{r/2} \cup \partial \Omega)$ on both sides of the
inequality, we conclude that
\[
\|T\|(\Omega_{r/2} \cup \partial \Omega) \le \frac{C_2}{C_2 + 1} \|T\|(\Omega_r \cup \partial \Omega).
\]
Thus we have the desired inequality for $\beta = C_2/(C_2 + 1)$.
\end{proof}

We now have everything in place for the proof of our main theorem.

\begin{proof}[Proof of Theorem \ref{thm:main}]
Recall that we use the assumption $e_\infty > 0$, as the theorem is trivial otherwise.
We have a measure-function pair $M_\infty = (\mu_\infty, F_\infty)$, constructed in Section \ref{sct:measure-function}, that satisfies
statements \ref{itm:minimisers-local} and \ref{itm:strong-convergence-local} of Theorems \ref{thm:limiting-measure-function-pair},
and \ref{itm:minimisers-global} and \ref{itm:strong-convergence-global} as well if $e_\infty' < e_\infty$.
As described at the beginning of this section, we set $T = [\mu_\infty, F_\infty]$.
Then Theorem \ref{thm:limiting-measure-function-pair} implies that $\|T\| \restr \Omega = e_\infty \mu_\infty \restr \Omega$ and
$\vec{T} = e_\infty^{-1} F_\infty$ in $\Omega$. If $e_\infty' < e_\infty$, then
$\|T\| = e_\infty \mu_\infty$ and $\vec{T} = e_\infty^{-1} F_\infty$ on $\overline{\Omega}$.
We conclude that $T \neq 0$ in the second case.

We now prove the individual statements of Theorem \ref{thm:main}.

\begin{subproof}{\ref{itm:boundary}}
It is clear that $\supp T \subseteq \overline{\Omega}$. For any $\sigma \in (\D^0(\R^n)^N$ with
compact support in $\Omega$, we note that
\[
T(d\sigma) = \int_\Omega F_\infty : D\sigma \, d\mu_\infty = 0
\]
by \eqref{eq:no-boundary}.
Hence $\supp \partial T \subseteq \partial \Omega$.
\end{subproof}

\begin{subproof}{\ref{itm:minimiser}}
Both statements follow immediately from Theorem \ref{thm:limiting-measure-function-pair}.
\end{subproof}

\begin{subproof}{\ref{itm:unique}}
We note that identity \eqref{eq:stationary} for the measure-function pair $M_\infty$
corresponds to \eqref{eq:geodesic} for $T$. Therefore,
if we have two minimisers $u, v \in u_0 + W_0^{1, \infty}(\Omega; \R^N)$ of $E_\infty$, then we
may apply Proposition \ref{prp:pointwise} to the function $u - v$. Since we have already proved
that $\locrep{Du}{\|T\|} = e_\infty \vec{T} = \locrep{Dv}{\|T\|}$, it follows that
$u = v$ on $\supp T$.
\end{subproof}

\begin{subproof}{\ref{itm:non-trivial}}
Here we assume that $e_\infty' < e_\infty$.
Then statement \ref{itm:normal} is a consequence of Proposition \ref{prp:boundary-measures}, as it follows that
\[
\partial T(\sigma) = \lim_{\ell \to \infty} \int_\Omega Du_{p_\ell} : D\sigma \, d\mu_{p_\ell} = \int_{\partial \Omega} f_{p_\ell} \cdot \sigma \, dm_{p_\ell} = \int_{\partial \Omega} f_\infty \cdot \sigma \, dm_\infty
\]
for all $\sigma \in (\D^0(\R^n)^N$. Here $(m_\infty, f_\infty)$ is the measure-function pair over $\partial \Omega$
found at the beginning of Section \ref{sct:key}.

Statement \ref{itm:trivial-on-boundary} is an obvious consequence of Lemma \ref{lem:decay-near-boundary}
and the fact that $T \neq 0$.

For the proof of statement \ref{itm:optimal}, we invoke Theorem \ref{thm:limiting-measure-function-pair}.
We conclude that $\rep{Du}{\|T\|} = e_\infty \vec{T}$.
Hence Proposition \ref{prp:optimal-current-global} implies that
$\partial T(u_0) = e_\infty \M(T)$.

Finally, we prove \ref{itm:geodesic}.
Recall that we have already proved the identity $\partial T(u_0) = e_\infty \M(T)$ when $e_\infty' < e_\infty$. Thus
Corollary \ref{cor:minimal-current-global} implies the desired statement.
\end{subproof}
\end{proof}

\section{The structure of $T$} \label{sct:structure}

In this final section we give some more results about the structure of the
$N$-tuple of $1$-currents $T$ constructed above. These are based on the
condition \eqref{eq:geodesic} and are closely related to standard
results on varifolds. Most of the results in the literature, however, do not
apply to $T$, as it is somewhat unusual: it should be thought of as a $1$-dimensional object,
because it acts on $1$-forms, but need not actually be $1$-dimensional in any other sense.
In contrast, most results on currents or varifolds in the literature assume rectifiability
in the appropriate dimension.

We use some more concepts from geometric measure theory in the following, including
the notion of a countably rectifiable measure. A definition can be found, e.g., in a book
by Mattila \cite[Definition 16.6]{Mattila:95}.

\begin{theorem} \label{thm:structure}
The measure $\|T\| \restr \Omega$ is absolutely continuous with respect to the one-dimensional Hausdorff measure $\mathcal{H}^1$.
Moreover, for any $\|T\|$-measurable set $A \subseteq \Omega$
such that $\mathcal{H}^1(A) < \infty$, the restriction $\|T\| \restr A$ is a countably $1$-rectifiable
measure. At $\|T\|$-almost every point $x \in A$, the approximate tangent space of $\|T\| \restr A$
contains $\{\vec{T}_1(x), \dotsc, \vec{T}_N(x)\}$.
\end{theorem}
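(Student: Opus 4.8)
The plan is to exploit the stationarity equation \eqref{eq:geodesic}, which the $N$-tuple $T$ satisfies in the form \eqref{eq:stationary}, and to recognise it as the first-variation identity of a (generalised) $1$-varifold. First I would reformulate \eqref{eq:geodesic} as the statement that the varifold-like measure $\|T\|$ has bounded (in fact vanishing) first variation in the sense that, for every $\psi \in C_0^\infty(\Omega; \R^n)$,
\[
\sum_{k=1}^N \int_\Omega \vec T_k \cdot D_{\vec T_k}\psi \, d\|T\| = 0.
\]
Choosing $\psi(x) = \phi(x) e_i$ for scalar $\phi \in C_0^\infty(\Omega)$ and each coordinate vector $e_i$, one reads off that the vector-valued distribution $\sum_k (\vec T_k \cdot \nabla)\,(\vec T_k)\,\|T\|$, suitably interpreted, vanishes; equivalently, $\|T\|$ together with the (non-normalised) "tangent" data $\sum_k \vec T_k \otimes \vec T_k$ forms a stationary generalised varifold. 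The point is that, even though $\|T\|$ need not be $1$-rectifiable a priori, the monotonicity-type machinery for varifolds with bounded first variation still applies once we pass to the part of $\|T\|$ with finite $\mathcal H^1$-measure.

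The key steps, in order, would be: \textbf{(1)} Derive a density lower bound. From the stationarity identity, testing with radial cut-off vector fields $\psi(x) = \zeta(|x-x_0|)(x-x_0)$ centred at a point $x_0 \in \Omega$, one obtains the standard monotonicity formula controlling $r \mapsto r^{-1}\|T\|(B_r(x_0))$, using that $\sum_k|\vec T_k|^2 = 1$ so that the "trace" of the tangent data is bounded below; this yields that the $1$-dimensional upper density $\Theta^{*1}(\|T\|, x_0)$ is bounded below by a positive constant at $\|T\|$-a.e.\ $x_0$. \textbf{(2)} Deduce absolute continuity $\|T\|\restr\Omega \ll \mathcal H^1$: by the standard density comparison lemma (e.g.\ \cite[Theorem 6.9]{Mattila:95} or the analogous statement for upper densities), the lower density bound forces $\|T\|$ to vanish on $\mathcal H^1$-null sets. \textbf{(3)} For a $\|T\|$-measurable set $A \subseteq \Omega$ with $\mathcal H^1(A) < \infty$, upgrade to rectifiability: on $A$ the density $\Theta^{*1}(\|T\|\restr A, \cdot)$ is now also bounded above (since $\|T\|\restr A \ll \mathcal H^1\restr A$ with bounded density, by step (2) and $\mathcal H^1(A)<\infty$), so $\|T\|\restr A$ is a measure with positive finite $1$-density $\mathcal H^1$-a.e.; combined with stationarity, Allard's rectifiability theorem for varifolds with bounded first variation (or its measure-theoretic precursor, the Marstrand--Mattila rectifiability criterion, cf.\ \cite[Chapters 16--17]{Mattila:95}) gives that $\|T\|\restr A$ is countably $1$-rectifiable. \textbf{(4)} Identify the approximate tangent: at $\|T\|$-a.e.\ $x \in A$ there is a well-defined approximate tangent line $L_x$; blowing up \eqref{eq:stationary} at such a point (rescaling $\|T\|\restr A$ and the frame $\vec T$), the limiting stationary varifold is a line, and the rescaled constraint $\sum_k|\vec T_k|^2 = 1$ together with the stationarity of the blow-up forces each $\vec T_k(x)$ to lie along $L_x$; hence $\{\vec T_1(x),\dotsc,\vec T_N(x)\} \subseteq L_x$, which is the claim about the approximate tangent space. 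An alternative to the blow-up in step (4), closer to the spirit of our construction, is to note that $\locrep{Du_\infty}{\|T\|} = e_\infty \vec T$ and to use the $p$-harmonic approximation directly: along $\mathcal H^1\restr A$-a.e.\ tangent direction the rescaled $u_{p_\ell}$ converge to an affine map whose gradient is $e_\infty \vec T$, and matching this with the tangent line of $\|T\|\restr A$ gives the inclusion.

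The main obstacle will be step (3)/(4): the varifold $\|T\|$ is genuinely non-standard in that it is a priori only an $(n\text{-dimensional-looking})$ measure acting on $1$-forms, not an integral or even rectifiable $1$-varifold, so one cannot cite Allard's theorem as a black box until the finite-$\mathcal H^1$-measure restriction has reduced matters to the classical setting. Care is needed that the "tangent data" $\sum_k \vec T_k \otimes \vec T_k$ is a genuine (possibly degenerate, rank $\le \min(N,n)$) nonnegative matrix field with trace $1$, and that the first-variation identity \eqref{eq:stationary} really is the varifold first variation with this tangent data and zero generalised mean curvature; verifying this correspondence, and then checking that the density bounds survive the restriction to $A$, is where the real work lies. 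The text already hints (\"our construction allows a shortcut\") that the $p$-harmonic approximation can substitute for some of this: in particular the blow-up identification in step (4) can be replaced by a direct argument using that $F_\infty = \locrep{Du_\infty}{\mu_\infty}$ has constant norm $e_\infty$ and is tangent to $T$, so that the geometry of $T$ is slaved to the geometry of the level structure of the minimiser $u_\infty$.
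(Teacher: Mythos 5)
Your overall plan — derive a monotonicity formula from the stationarity identity \eqref{eq:stationary}, control the $1$-density of $\|T\|$, and then use a density-rectifiability theorem plus a blow-up to identify the tangent — matches the paper's strategy in outline, and your choice of Preiss/Marstrand--Mattila as the rectifiability input is correct (Allard's theorem is a detour that cannot be applied before the reduction, as you yourself note). However, steps (1) and (2) contain a genuine logical error that would break the argument.

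You claim that the monotonicity identity yields a \emph{lower} bound on the upper density $\Theta^{*1}(\|T\|,\cdot)$, and then that this lower bound forces $\|T\|$ to vanish on $\mathcal{H}^1$-null sets. Both halves are wrong. Monotonicity of $r \mapsto r^{-1}\|T\|(B_r(x_0))$ gives that the limit $\Theta(x_0)$ exists and is bounded \emph{above} by $r^{-1}\|T\|(B_r(x_0))$ for any fixed $r$, i.e.\ $\Theta$ is locally bounded; it does not produce a positive lower bound. And the density comparison lemma you cite goes the other way: a lower bound $\Theta^{*s}(\mu,x) \ge \lambda$ on $A$ gives $\mu(A) \ge \lambda\,\mathcal{H}^s(A)$, which is useless for absolute continuity (consider $\mu = \delta_0$, which has $\Theta^{*1} = \infty$ but charges an $\mathcal{H}^1$-null set). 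What is actually needed, and what the paper uses, is the \emph{upper} bound: local boundedness of $\Theta$ gives $\|T\|(E) \le 2C_K\,\mathcal{H}^1(E)$ for $E \subseteq K \Subset \Omega$, hence $\|T\| \restr \Omega \ll \mathcal{H}^1$. The lower-density issue is then handled differently: the paper sets $\Sigma = \{\Theta > 0\}$, shows (using $\mathcal{H}^1(A) < \infty$ and the density comparison in the \emph{correct} direction) that $\|T\|(A \setminus \Sigma) = 0$, and applies Preiss's theorem on each $\Sigma_\ell = \{\Theta \ge 1/\ell\}$ where both upper and lower density bounds hold. Your step (4) is on the right track; the paper implements the tangent identification by combining the blow-up with the monotonicity identity to show that $\int |x|^{-1}(\zeta : \vec{T}_0)^2\,d\lambda_0 = 0$ for test fields $\zeta$ pointwise orthogonal to the radial direction, which forces $\vec{T}_k(x_0) \in L_{x_0}$, rather than by invoking the $p$-harmonic approximation.
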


This means that the dimension of $T$, say in the sense of Hausdorff dimension
for the measure $\|T\|$ as defined by Mattila, Mor\'an, and Rey \cite{Mattila-Moran-Rey:00}, is at least $1$
(unless $\|T\|(\Omega) = 0$). It can, however,
be higher. (An example is discussed in the introduction.) If we restrict $\|T\|$ to a
$1$-dimensional set $A$, then we have the structure typically assumed in geometric measure theory.
We then also find that the vectors $\vec{T}_1(x), \dotsc, \vec{T}_N(x)$ are tangential.
Since we have a $1$-dimensional approximate tangent space here, this means that they are
parallel to one another (or vanish) at $\|T\|$-almost every point $x \in A$.

For the proof, we use a monotonicity identity that is a standard tool in the theory of varifolds
(see \cite[\S 17]{Simon:83}). It is difficult, however, to find a formulation in the literature
for anything more general than a rectifiable varifold, even though the standard arguments apply
more generally. We therefore provide a proof for the convenience of the reader. 
As mentioned earlier, varifolds and currents are closely related, and we can formulate the
identity in terms of $T$.

\begin{lemma} \label{lem:monotonicity}
For any $x_0 \in \Omega$ and for $0 < s < r \le \dist(x_0, \partial \Omega)$,
the identity
\[
\frac{\|T\|(B_r(x_0))}{r} - \frac{\|T\|(B_s(x_0))}{s} = \int_{B_r(x_0) \setminus B_s(x_0)} \frac{1 - \bigl|\frac{x - x_0}{|x - x_0|} \cdot \vec{T}\bigr|^2}{|x - x_0|} \, d\|T\|
\]
holds true.
\end{lemma}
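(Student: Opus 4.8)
The plan is to carry out the classical monotonicity computation for stationary varifolds (as in \cite[\S 17]{Simon:83}), with the weak geodesic equation \eqref{eq:geodesic} playing the role of the first-variation identity. Recall that \eqref{eq:geodesic} holds for the current $T$ constructed above; this was already used in the proof of statement \ref{itm:unique} of Theorem \ref{thm:main}. Setting $P(x) = \sum_{k = 1}^N \vec{T}_k(x) \otimes \vec{T}_k(x)$, which is a symmetric $(n \times n)$-matrix with $\operatorname{tr} P = \sum_{k = 1}^N |\vec{T}_k|^2 = 1$ at $\|T\|$-almost every point, equation \eqref{eq:geodesic} reads $\int_\Omega P : D\psi \, d\|T\| = 0$ for every $\psi \in C_0^\infty(\Omega; \R^n)$.

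First I would fix a point $x_0 \in \Omega$ and a smooth non-increasing function $\phi \colon [0, \infty) \to [0, 1]$ with $\phi \equiv 1$ near $0$ and $\phi \equiv 0$ on $[1, \infty)$. Writing $r(x) = |x - x_0|$, for $0 < \rho < \dist(x_0, \partial\Omega)$ I would test \eqref{eq:geodesic} with $\psi(x) = \phi(r(x)/\rho)\,(x - x_0)$, which is admissible since it is supported in $\overline{B_\rho(x_0)} \Subset \Omega$ and coincides with $x - x_0$ near $x_0$. A direct computation gives
\[
D\psi = \phi(r/\rho)\, I + \frac{\phi'(r/\rho)}{\rho\, r}\,(x - x_0) \otimes (x - x_0),
\]
so that, using $\operatorname{tr} P = 1$ and $(x - x_0)^\top P\, (x - x_0) = r^2 \bigl|\tfrac{x - x_0}{r} \cdot \vec{T}\bigr|^2$ at $\|T\|$-almost every point,
\[
P : D\psi = \phi(r/\rho) + \frac{r}{\rho}\, \phi'(r/\rho) \Bigl|\tfrac{x - x_0}{r} \cdot \vec{T}\Bigr|^2 .
\]
Integrating this over $\Omega$, substituting $\bigl|\tfrac{x - x_0}{r} \cdot \vec{T}\bigr|^2 = 1 - h$ with $h(x) = 1 - \bigl|\tfrac{x - x_0}{r(x)} \cdot \vec{T}(x)\bigr|^2 \ge 0$ (non-negativity by Cauchy--Schwarz, since $\sum_k |\vec{T}_k|^2 = 1$), and introducing $I(\rho) = \int_\Omega \phi(r/\rho) \, d\|T\|$ together with $J(\rho) = \int_\Omega \phi(r/\rho)\, h \, d\|T\|$, I would differentiate $I$ and $J$ under the integral sign (legitimate because $\phi$ is Lipschitz and $\|T\|(B_\rho(x_0)) < \infty$) to rewrite the resulting identity as $\rho I'(\rho) - I(\rho) = \rho J'(\rho)$, that is,
\[
\frac{d}{d\rho}\left(\frac{I(\rho)}{\rho}\right) = \frac{J'(\rho)}{\rho} .
\]

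Next I would integrate this over $\rho \in (s, r)$ and integrate the right-hand side by parts, which gives
\[
\frac{I(r)}{r} - \frac{I(s)}{s} = \frac{J(r)}{r} - \frac{J(s)}{s} + \int_s^r \frac{J(\rho)}{\rho^2} \, d\rho
\]
for every cut-off $\phi$ of the above type. Letting $\phi$ increase pointwise to $\mathbf{1}_{[0, 1)}$ along a sequence, monotone convergence yields $I(\rho) \to \|T\|(B_\rho(x_0))$, $J(\rho) \to \int_{B_\rho(x_0)} h \, d\|T\|$, and the same for the $\rho$-integral, so the identity persists for all $0 < s < r \le \dist(x_0, \partial\Omega)$, with no trouble from possible atoms of $\|T\|$ on spheres. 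Finally, by Tonelli's theorem,
\[
\int_s^r \frac{1}{\rho^2} \int_{B_\rho(x_0)} h \, d\|T\| \, d\rho = \int_\Omega h(x) \left( \int_s^r \frac{\mathbf{1}_{\{r(x) < \rho\}}}{\rho^2} \, d\rho \right) d\|T\|(x) ,
\]
and the inner integral equals $\tfrac1s - \tfrac1r$ when $r(x) < s$, equals $\tfrac{1}{r(x)} - \tfrac1r$ when $s \le r(x) < r$, and vanishes otherwise; substituting this, all the terms involving $\int_{B_s(x_0)} h$ and $\int_{B_r(x_0)} h$ cancel, and one is left with exactly $\int_{B_r(x_0) \setminus B_s(x_0)} \frac{h}{r(x)} \, d\|T\|$, which is the asserted identity.

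These are all standard manipulations, so there is no deep obstacle; the points that require attention are the justification of the differentiation under the integral sign, and, above all, the bookkeeping in the final step --- tracking which boundary terms cancel once the cut-off is removed and the Tonelli exchange is performed. I expect this last telescoping to be the most error-prone part. The correct "one-dimensional" scaling (the factors $1/r$, $1/s$, $1/r(x)$) is forced by the normalisation $\operatorname{tr} P = 1$, which is where the constraint $\sum_k |\vec{T}_k|^2 = 1$ enters.
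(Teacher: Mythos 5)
Your argument is correct and follows essentially the same route as the paper: test the weak geodesic equation \eqref{eq:geodesic} with the radial vector field $\phi(|x-x_0|/\rho)(x-x_0)$, derive the differential identity $\frac{d}{d\rho}(I(\rho)/\rho) = J'(\rho)/\rho$, integrate in $\rho$, swap the order of integration, and finally remove the cut-off. The only cosmetic differences are that the paper sets $x_0=0$, keeps the cut-off $\xi$ to the end (simplifying the kernel $\Phi$ explicitly before passing to the indicator function), and performs the integration by parts pointwise inside the Fubini step rather than first at the level of $J$; the bookkeeping, including the telescoping you flagged, comes out the same.
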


\begin{proof}
We may assume without loss of generality that $x_0 = 0$.

Choose a non-increasing function $\xi \in C^\infty(\R)$ with $\xi \equiv 1$ in $(-\infty, 0]$ and $\xi \equiv 0$ in
$[1, \infty)$. For $\rho > 0$, define
\[
\eta_\rho(x) = \xi(|x|/\rho).
\]
Test \eqref{eq:geodesic} with $\psi_\rho(x) = \eta_\rho(x)x$. This gives
\[
\int_\Omega \left(\eta_\rho(x) + \xi'(|x|/\rho) \frac{|x|}{\rho} \Bigl|\frac{x}{|x|} \cdot \vec{T}\Bigr|^2\right) \, d\|T\|(x) = 0.
\]

Now we compute
\[
\begin{split}
\frac{d}{d\rho} \left(\frac{1}{\rho} \int_\Omega \eta_\rho \, d\|T\|\right) & = -\frac{1}{\rho^2} \int_\Omega \left(\eta_\rho(x) + \frac{|x|}{\rho} \xi'(|x|/\rho)\right) \, d\|T\|(x) \\
& = -\frac{1}{\rho^3} \int_\Omega \xi'(|x|/\rho) |x| \left(1 - \Bigl|\frac{x}{|x|} \cdot \vec{T}\Bigr|^2\right)\, d\|T\|(x). \\
\end{split}
\]
Integrate with respect to $\rho$ over the interval $(s, r)$:
\begin{multline*}
\frac{1}{r} \int_\Omega \eta_r \, d\|T\| - \frac{1}{s} \int_\Omega \eta_s \, d\|T\| \\
\begin{aligned}
& = -\int_s^r \frac{1}{\rho^3} \int_\Omega \xi'(|x|/\rho) |x| \left(1 - \Bigl|\frac{x}{|x|} \cdot \vec{T}\Bigr|^2\right)\, d\|T\|(x) \, d\rho \\
& = -\int_\Omega \int_s^r \frac{|x| \xi'(|x|/\rho)}{\rho^3} \, d\rho \left(1 - \Bigl|\frac{x}{|x|} \cdot \vec{T}\Bigr|^2\right)\, d\|T\|(x).
\end{aligned}
\end{multline*}
Set
\[
\Phi(x) = -\int_s^r \frac{|x| \xi'(|x|/\rho)}{\rho^3} \, d\rho,
\]
so that
\[
\frac{1}{r} \int_\Omega \eta_r \, d\|T\| - \frac{1}{s} \int_\Omega \eta_s \, d\|T\| = \int_\Omega \Phi(x) \left(1 - \Bigl|\frac{x}{|x|} \cdot \vec{T}\Bigr|^2\right)\, d\|T\|(x).
\]
Note that
\[
\Phi(x) = \int_s^r \frac{d}{d\rho} \xi(|x|/\rho) \, \frac{d\rho}{\rho} = \frac{\xi(|x|/r)}{r} - \frac{\xi(|x|/s)}{s} + \int_s^r \xi(|x|/\rho) \, \frac{d\rho}{\rho^2}.
\]
Now let $\xi$ approximate the characteristic function of $(-\infty, 1)$. Then $\Phi(x)$ converges to
\[
r^{-1} + \int_{|x|}^r \frac{d\rho}{\rho^2} = |x|^{-1}
\]
for $s \le |x| < r$ and to $0$ else. Using the dominated convergence theorem, we obtain the formula in the statement.
\end{proof}

\begin{proof}[Proof of Theorem \ref{thm:structure}]
A consequence of Lemma \ref{lem:monotonicity} is that for any $x \in \Omega$, the function $r \mapsto r^{-1}\|T\|(B_r(x))$
is monotone. Therefore, the limit
\[
\Theta(x) = \lim_{r \searrow 0} \frac{\|T\|(B_r(x))}{r}
\]
exists. Lemma \ref{lem:monotonicity} also implies that $\Theta$ is locally bounded.
It follows from standard results on Hausdorff measures
\cite[Section 2.10.19]{Federer:69} that $\|T\|$ is absolutely continuous with respect to
$\Ha^1$.

Let $\Sigma = \{x \in \Omega \colon \Theta(x) > 0\}$. Moreover, for $\ell \in \N$,
let $\Sigma_\ell = \{x \in \Omega \colon \Theta(x) \ge 1/\ell\}$. Then it also
follows from standard results \cite[Section 2.10.19]{Federer:69} that
\[
\lim_{r \searrow 0} \frac{1}{r} \|T\|(B_r(x) \setminus \Sigma_\ell) = 0
\]
for almost every $x \in \Sigma_\ell$ with respect to $\Ha^1$ (and therefore with respect to $\|T\|$ as well). Hence
\[
\lim_{r \searrow 0} \frac{1}{r} \|T\|(B_r(x) \cap \Sigma_\ell) = \Theta(x)
\]
for $\|T\|$-almost every $x \in \Sigma_\ell$. The results of Preiss \cite{Preiss:87} now imply that
the measure $\|T\| \restr \Sigma_\ell$ is countably $1$-rectifiable. Hence $\Sigma$ is a countably
$1$-rectifiable set.

Given $x_0 \in \Sigma$, we now consider tangent measures of $\|T\|$ at $x_0$. To this end, define
$\Omega_r = \frac{1}{r}(\Omega - x_0)$ for $r > 0$. Consider
the measures $\lambda_r$ on $\Omega_r$ defined by
\[
\int_{\Omega_r} \eta \, d\lambda_r = \frac{1}{r} \int_\Omega \eta((x - x_0)/r) \, d\|T\|(x)
\]
for $\eta \in C_0^0(\Omega_r)$, and consider the functions $\vec{T}_r \colon \Omega_r \to \R^{N \times n}$ with
\[
\vec{T}_r(x) = \vec{T}(rx + x_0).
\]

Then for any $R > 0$,
\[
\lambda_r(B_R(0)) = \frac{1}{r} \|T\|(B_{Rr}(x_0)) \to R\Theta(x_0),
\]
while $|\vec{T}_r| = 1$ almost everywhere with respect to $\lambda_r$.
We may therefore pick a sequence $r_\ell \searrow 0$ such that the measure-function pairs
$(\lambda_{r_\ell}, \vec{T}_{r_\ell})$ converge weakly to some measure-function pair
$(\lambda_0, \vec{T}_0)$ over $\R^n$. If $x_0$ is such that $\Sigma$ has an approximate tangent line $L_{x_0} \subseteq \R^n$
at $x_0$ (which is $\|T\|$-almost everywhere \cite[Theorem 11.6]{Simon:83})
and $\vec{T}$ is approximately continuous at $x_0$ with respect to $\|T\|$
(also $\|T\|$-almost everywhere \cite[Theorem 2.9.13]{Federer:69}), then
the limit will be locally strong and of the form
\begin{equation} \label{eq:tangent1}
\lambda_0 = \frac{1}{2} \Theta(x_0) \mathcal{H}^1 \restr L_{x_0}
\end{equation}
and
\begin{equation} \label{eq:tangent2}
\vec{T}_0(0) = \vec{T}(x_0).
\end{equation}

Now suppose that $0 < S < R$. Let $\zeta \in C_0^0(B_R(0) \setminus B_S(0); \R^{N \times n})$ such that
$x \cdot \zeta_k(x) = 0$ everywhere for $k = 1, \dotsc, N$. Then we conclude that
\[
\begin{split}
\lefteqn{\int_{\R^n} |x|^{-1} (\zeta : \vec{T}_0)^2 \, d\lambda_0} \quad \\
& = \lim_{\ell \to \infty} \int_{\R^n} |x|^{-1} (\zeta : \vec{T}_{r_\ell})^2 \, d\lambda_{r_\ell}(x) \\
& = \lim_{\ell \to \infty} \int_{\Omega} |x - x_0|^{-1} \left(\zeta((x - x_0)/r_\ell) : \vec{T}(x)\right)^2 \, d\|T\|(x) \\
& \le \|\zeta\|_{L^\infty(\R^n)}^2 \lim_{\ell \to \infty} \int_{B_{Rr_\ell}(x_0) \setminus B_{Sr_\ell}(x_0)} \frac{1 - \bigl|\frac{x - x_0}{|x - x_0|} \cdot \vec{T}(x)\bigr|^2}{|x - x_0|} \, d\|T\|(x) \\
& = \|\zeta\|_{L^\infty(\R^n)}^2 \lim_{\ell \to \infty} \left(\frac{1}{Rr_\ell} \|T\|(B_{Rr_\ell}(x_0)) - \frac{1}{Sr_\ell} \|T\|(B_{Sr_\ell}(x_0))\right)
\end{split}
\]
by Lemma \ref{lem:monotonicity}. As
\[
\lim_{\ell \to \infty} \frac{\|T\|(B_{Rr_\ell}(x_0))}{Rr_\ell} = \Theta(x_0) = \frac{\|T\|(B_{Sr_\ell}(x_0)}{Sr_\ell},
\]
we conclude that
\[
\int_{\R^n} |x|^{-1} (\zeta : T_0)^2 \, d\lambda_0 = 0
\]
for any $\zeta$ with the above properties. If $x_0$ is such that \eqref{eq:tangent1} and \eqref{eq:tangent2} hold
true, then this means that $\vec{T}_k(x_0) \in L_{x_0}$ for $k = 1, \dotsc, N$.
Recall that this is true for $\|T\|$-almost every $x_0 \in \Sigma$.

Now the claims of the theorem follow almost immediately. We have already seen that
$\|T\|$ is absolutely continuous with respect to $\mathcal{H}^1$. For a set $A$ as in the statement,
we conclude that $\|T\|(A \setminus \Sigma) = 0$ \cite[Section 2.10.19]{Federer:69}, so we may assume that $A \subseteq \Sigma$.
The remaining statements then follow from what we know about $\Sigma$.
\end{proof}

\paragraph{Acknowledgements}
This work was partially supported by the Engineering and Physical Sciences Research Council (grant numbers
EP/X017109/1 and EP/X017206/1).
We wish to thank A.~Backus for his comments on a previous version of this paper.

\def\cprime{$'$}
\providecommand{\bysame}{\leavevmode\hbox to3em{\hrulefill}\thinspace}
\providecommand{\MR}{\relax\ifhmode\unskip\space\fi MR }
\providecommand{\MRhref}[2]{%
  \href{http://www.ams.org/mathscinet-getitem?mr=#1}{#2}
}
\providecommand{\href}[2]{#2}

\end{document}